\newtheorem{thm}{Theorem}[section]
\newtheorem{lem}[thm]{Lemma}
\newtheorem{prop}[thm]{Proposition}
\numberwithin{equation}{section}
\numberwithin{Remark}{section}
\begin{document}

\title{Two flow approaches to the Loewner--Nirenberg problem on manifolds}

\author{Gang Li$^\dag$}

\begin{abstract}
We introduce two flow approaches to the Loewner--Nirenberg problem on comapct Riemannian manifolds $(M^n,g)$ with boundary and establish the convergence of the corresponding Cauchy--Dirichlet problems to the solution of the Loewner--Nirenberg problem. In particular, when the initial data $u_0\in C^{4,\alpha}(M)$ is a solution or a strict subsolution to the equation $(\ref{equn_scalarcurvatureequn})$, the convergence holds for both the direct flow $(\ref{equn_scalar1})-(\ref{equn_bdv1})$ and the Yamabe flow $(\ref{equn_scalarflow2})$. Moreover, when the background metric satisfies $R_g\geq0$, the convergence holds for any positive initial data $u_0\in C^{2,\alpha}(M)$ for the direct flow; while for the case the first eigenvalue $\lambda_1<0$ for the Dirichlet problem of the conformal Laplacian $L_g$, the convergence holds for $u_0>v_0$ where $v_0$ is the largest solution to the homogeneous Dirichlet boundary value problem of $(\ref{equn_scalarcurvatureequn})$ and $v_0>0$ in $M^{\circ}$ (the interior of $M$). We also give an equivalent description between the existence of a metric of positive scalar curvature in the conformal class of $(M,g)$ and $\displaystyle\inf_{u\in C^1(M),\,u\not\equiv 0\,\text{on}\,\partial M}Q(u)>-\infty$, where $Q$ is the energy functional (see $(\ref{equn_energy123-1-1})$) of the second type Escobar-Yamabe problem.
\end{abstract}

\renewcommand{\subjclassname}{\textup{2000} Mathematics Subject Classification}
 \subjclass[2010]{Primary 53C44; Secondary 35K55, 35R01, 53C21}


\thanks{$^\dag$ Research partially supported by the National Natural Science Foundation of China No. 11701326 and  the Young Scholars Program of Shandong University 2018WLJH85.}

\address{Gang Li, Department of Mathematics, Shandong University, Jinan, Shandong Province, China}
\email{runxing3@gmail.com}

\maketitle


\section{Introduction}

In the well-known paper \cite{LL}, Loewner and Nirenberg studied the blowing up boundary value problem
\begin{align*}
&\Delta u=\frac{1}{4}n(n-2)u^{\frac{n+2}{n-2}},\,\,\text{in}\,\,\Omega,\\
&u(x)\to \infty,\,\,\,\text{as}\,\,x\to \partial \Omega,
\end{align*}
with $\Omega$ a bounded domain of class $C^2$ in $\mathbb{R}^n$. They proved that there exists a unique positive solution $u$ to this problem, and there exists a constant $C>0$ depending on the domain $\Omega$ such that
\begin{align*}
|\text{dist}(x,\partial\Omega)^{\frac{n-2}{2}}u-1|\leq C \text{dist}(x,\partial\Omega)
\end{align*}
near the boundary, where $\text{dist}(x,\partial\Omega)$ is the distance of $x$ to $\partial\Omega$. This is equivalent to seeking the conformal metric $g=u^{\frac{4}{n-2}}\delta$ with $\delta$ the Euclidean metric on $\Omega$ to be geodesically complete and have constant scalar curvature $R_g=-n(n-1)$.

In \cite{AM} and \cite{AM1}, Aviles and McOwen generalized the Loewner--Nirenberg problem to compact Riemannian manifolds $(M,g)$ with boundary. Denote $M^{\circ}$ to be the interior of $M$. In particular, they considered the blowing-up Dirichlet boundary value problem
\begin{align}
&\label{equn_scalarcurvatureequn}\frac{4(n-1)}{n-2}\Delta u-R_gu-n(n-1)u^{\frac{n+2}{n-2}}=0,\,\,\,\text{in}\,\,M^{\circ},\\
&\label{equn_blowingupboundarydata}u(p)\to\infty,\,\,\,\text{as}\,\,p\to\partial M.
\end{align}
We call $(\ref{equn_scalarcurvatureequn})-(\ref{equn_blowingupboundarydata})$ the Loewner--Nirenberg problem on $(M,g)$. Using classical variational method they obtained a sequence of solutions to $(\ref{equn_scalarcurvatureequn})$ with enlarging Dirichlet boundary data that go to infinity, and using maximum principle and an integral type weak Harnack inequality, they obtained the existence of a solution $u$ to $(\ref{equn_scalarcurvatureequn})-(\ref{equn_blowingupboundarydata})$ and analyzed the asymptotic behavior of $u$ near the boundary. The uniqueness of the solution can be found in \cite{ACF} (see also \cite{GSW}) after a better understanding on the main term of the asymptotic behavior of $u$ near the boundary. For regularity of the Loewner--Nirenberg metric in the conformal class of a smooth compact manifold $(M^n,g)$ with boundary, an nice expansion of the solution near the boundary is given in \cite{ACF}\cite{Mazzeo}. Recently, Xumin Jiang and Qing Han developed a type of weighted Schauder estimates near the boundary, see \cite{HanJiang}(see also \cite{Kichenassamy}), which fits in this expansion well, and for expansion of the solution near the boundary for manifolds with corners on the boundary see \cite{HanShen} and for more references on this topic one is referred to \cite{HanJiangShen}.

 In this article, we derive two flow approaches to the Loewner--Nirenberg problem. Indeed, we introduce the Cauchy--Dirichlet problems to a direct scalar curvature flow (see $(\ref{equn_scalar1})-(\ref{equn_bdv1})$) and the Yamabe flow $(\ref{equn_scalarflow2})$ on a compact Riemannian manifold $(M,g)$ with boundary.

Let $(M^n,g)$ be a compact Riemannian manifold with boundary. Let $M^{\circ}$ be the interior of $M$. Let $k,m\geq 0$ be integers and $0<\alpha, \beta<1$. Define the function spaces
\begin{align*}
&C_{loc}^{k+\alpha,m+\beta}(\partial M\times[0,+\infty))=\{u\in C^{k+\alpha,m+\beta}(\partial M\times[0,T])\,\,\text{for any}\,\,T>0\},\,\,\text{and}\\
& C_{loc}^{k+\alpha,m+\beta}(M\times[0,+\infty))=\{u\in C^{k+\alpha, m+\beta}(M\times[0,T])\,\,\text{for any}\,\,T>0\}.
 \end{align*}
 Consider the Cauchy--Dirichlet problem
 \begin{align}
&\label{equn_scalar1}u_t=\frac{4(n-1)}{n-2}\Delta u-R_gu-n(n-1)u^{\frac{n+2}{n-2}},\,\,\,\text{in}\,\,M\times [0,+\infty),\\
&u(p,0)=u_0(p),\,\,p\in M,\\
&\label{equn_bdv1}u(q,t)=\phi(q,t),\,\,q\in\,\partial M,
\end{align}
where $u_0\in C^{2,\alpha}(M)$ and $\phi\in C_{loc}^{2+\alpha,1+\frac{\alpha}{2}}(\partial M\times[0,\infty))$ are positive functions satisfying the compatibility condition
\begin{align}\label{equn_compatibleequn1-2}
u_0(p)=\phi(p,0),\,\,\phi_t(p,0)=\frac{4(n-1)}{n-2}\Delta u_0(p)-R_gu_0(p)-n(n-1)u_0(p)^{\frac{n+2}{n-2}}\,\,\,\,\,\text{for}\,\,p\in\,\partial M,
\end{align}
and moreover, $\phi\to \infty$ as $t\to\infty$ and $\phi$ satisfies that
 \begin{align}\label{ineqn_bdvcond1}
\begin{split}
&\phi^{\frac{n}{2-n}}\frac{d\phi}{dt}\to0,\,\,\,\phi^{\frac{n-1}{2-n}}|\nabla_{g}\phi|\to 0,\\
&\phi^{\frac{n}{2-n}}|\nabla_{g}^2\phi|\to0,\,\,\text{uniformly on}\,\,\,\partial M, \,\,\text{as}\,\,t\to+\infty.
\end{split}
\end{align}
 which is called {\it the direct flow} in this paper. It is easy to check that if $\phi=e^t,\,t$, or $\log(t)$ for $t$ large, then $\phi$ satisfies the condition $(\ref{ineqn_bdvcond1})$.

 For the direct flow, we first derive the long-time existence of the flow for general initial data, see Lemma \ref{lem_longtimeexistence}. If the boundary data $\phi\to \infty$ and satisfies $(\ref{ineqn_bdvcond1})$ as $t\to\infty$, we obtain an asymptotic blowing up lower bound estimates near the boundary, see Lemma \ref{lem_asymlowbdnearbd1}. Together with the interior upper bound estimates (see Lemma \ref{lem_intupperboundestimates}, in comparison with the local upper bound estimates in \cite{AM1}) and the Harnack inequality, we obtain the uniform upper and lower bound of $u$ on any given compact subset of $M^{\circ}$. The convergence of the flow is the main part of the discussion. We have the following theorem for the case $R_g\geq0$.
\begin{thm}\label{thm_scalarpositiveupperb}
Let $(M, g)$  be a smooth compact manifold with boundary such that $R_g\geq0$. For any positive function $u_0\in C^{2,\alpha}(M)$, we can construct a positive function $\phi\in C_{loc}^{2,\alpha}(\partial M \times [0,+\infty))$ satisfying the compatibility condition $(\ref{equn_compatibleequn1-2})$
 on $\partial M\times\{0\}$ such that $\phi_t\geq0$ on $\partial M\times [t_1,\infty)$ for some $t_1>0$, and $\phi\to\infty$ as $t\to\infty$, and moreover $\phi$ satisfies $(\ref{ineqn_bdvcond1})$. For each such pair of functions $u_0$ and $\phi$, there exists a unique solution $u$ to the Cauchy--Dirichlet problem $(\ref{equn_scalar1})-(\ref{equn_bdv1})$, converging in $C_{loc}^2(M^{\circ})$ to a solution $u_{\infty}$ to the Loewner--Nirenberg problem $(\ref{equn_scalarcurvatureequn})-(\ref{equn_blowingupboundarydata})$ as $t\to +\infty$. Moreover, there exists a uniform constant $C>0$ independent of $t$ such that
\begin{align*}
\frac{1}{C}\big(\,x+\,(\inf_{p\in\partial M}\phi(p,t)\,)^{\frac{2}{2-n}}\,\big)^{\frac{2-n}{2}}-C\leq u\leq Cx^{\frac{2-n}{2}}
\end{align*}
near the boundary $\partial M$ for $t$ large, where $x$ is the distance function to the boundary.
 \end{thm}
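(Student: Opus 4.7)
My overall strategy is to assemble the three ingredients advertised in the paragraph preceding the theorem --- long-time existence, the interior upper bound, and the asymptotic near-boundary lower bound --- into $C^{2}_{loc}$-precompactness of the family $\{u(\cdot,t)\}_{t\geq0}$ on $M^\circ$, extract subsequential limits as $t_j\to\infty$ that solve the stationary Loewner--Nirenberg problem, and close the argument via its uniqueness.

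For the construction of $\phi$, given a positive $u_0\in C^{2,\alpha}(M)$, I set $\phi(\cdot,0)=u_0|_{\partial M}$, define $\phi_t(\cdot,0)$ on $\partial M$ by the right-hand side of $(\ref{equn_compatibleequn1-2})$, and extend to a smooth function on $\partial M\times[0,\infty)$ equal to $u_0|_{\partial M}+\log(1+t)$ for $t\geq t_1$ with a smooth interpolation on $[0,t_1]$; then $\phi_t\geq0$ on $[t_1,\infty)$, $\phi\to\infty$, and $(\ref{ineqn_bdvcond1})$ is immediate. Long-time existence and uniqueness of $u\in C_{loc}^{2+\alpha,1+\alpha/2}(M\times[0,\infty))$ follow respectively from Lemma~\ref{lem_longtimeexistence} and the parabolic comparison principle (applicable since the nonlinearity $-n(n-1)u^{(n+2)/(n-2)}$ is decreasing in $u$). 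The interior upper bound $u(p,t)\leq Cx(p)^{(2-n)/2}$ near $\partial M$ is Lemma~\ref{lem_intupperboundestimates}, and the matching asymptotic lower bound announced in the theorem is Lemma~\ref{lem_asymlowbdnearbd1}; together they give the claimed pointwise two-sided estimate.

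To obtain $C^{2}_{loc}$-precompactness, fix $K\Subset M^\circ$: the upper bound gives $u\leq C_K$ uniformly in $t$; the near-boundary lower bound provides a uniform positive lower bound on an annular neighborhood of $\partial M$ for $t$ large (since $\inf_{\partial M}\phi(\cdot,t)\to\infty$), and the parabolic Harnack inequality propagates a positive minimum inward to $K$. With uniform two-sided $C^0$ bounds the equation is uniformly parabolic on $K$ with bounded coefficients, so interior parabolic Schauder theory yields uniform $C^{2+\alpha,1+\alpha/2}$ control on $K'\times[T,T+1]$ for any $K'\Subset K$ and $T\geq1$. By Arzel\`a--Ascoli, for any $t_j\to\infty$ a subsequence of the time-translates $u(\cdot,t_j+\,\cdot\,)$ converges in $C^{2,1}_{loc}(M^\circ\times\mathbb R)$ to a solution $U(p,s)$ of the same parabolic equation on $M^\circ\times\mathbb R$.

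To identify $U$ as stationary, $w=u_t$ satisfies the linearization
\[
w_t=\frac{4(n-1)}{n-2}\Delta w-\Bigl(R_g+\tfrac{n(n-1)(n+2)}{n-2}u^{4/(n-2)}\Bigr)w,
\]
a linear parabolic equation with a nonnegative potential that blows up uniformly near $\partial M$ (by the pointwise lower bound) and with boundary values $\phi_t\to0$ (for $\phi=\log(1+t)$, $\phi_t=1/(1+t)$). A barrier argument exploiting the blow-up of the potential forces $w\to0$ uniformly on compact subsets of $M^\circ$, so $U=U(p)$ is stationary; the pointwise lower bound then forces $U(p)\to\infty$ as $p\to\partial M$, so $U$ solves the Loewner--Nirenberg problem $(\ref{equn_scalarcurvatureequn})$--$(\ref{equn_blowingupboundarydata})$. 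By uniqueness of that problem (\cite{ACF}, \cite{GSW}) the limit $U=u_\infty$ is independent of the subsequence, yielding full $C^{2}_{loc}(M^\circ)$-convergence. The principal obstacle is precisely this stationarity step: global monotonicity of $u$ in $t$ and a clean gradient-flow energy dissipation identity are both unavailable because the Dirichlet data $\phi$ themselves diverge, so proving $u_t\to0$ on compact interior subsets must rest on quantitative decay for the linearization driven by the blowing-up of its potential.
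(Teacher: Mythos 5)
Your skeleton (long-time existence, the two-sided bounds from Lemmas \ref{lem_intupperboundestimates} and \ref{lem_asymlowbdnearbd1}, interior Schauder compactness, identification of the limit via uniqueness of the Loewner--Nirenberg solution) is sound up to one step, but that step is the crux and your proposed resolution does not go through in the stated generality. You reduce everything to showing that $w=u_t$ tends to $0$ on compact subsets of $M^\circ$, and you offer only "a barrier argument exploiting the blow-up of the potential." Two problems. First, the blow-up of the potential $V=R_g+\tfrac{n(n-1)(n+2)}{n-2}u^{4/(n-2)}$ near $\partial M$ does not by itself control $w$ in the interior: on a compact interior set $V$ is merely bounded below by a positive constant (and even that requires first proving $\inf_{M\times[T_0,\infty)}u>0$, which you do not establish up to the boundary). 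Second, and more seriously, the maximum principle for $w_t=\tfrac{4(n-1)}{n-2}\Delta w-Vw$ on all of $M$ necessarily sees the boundary values $w|_{\partial M}=\phi_t$, and the hypotheses of the theorem do \emph{not} force $\phi_t\to0$: condition $(\ref{ineqn_bdvcond1})$ only requires $\phi^{\frac{n}{2-n}}\phi_t\to0$, which is satisfied by $\phi=e^t$ with $\phi_t=e^t\to\infty$. For such admissible data your decay mechanism for $u_t$ breaks down, and with it the stationarity of your subsequential limits $U(p,s)$. You correctly flag this as the principal obstacle, but flagging it is not closing it.

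The paper's proof avoids $u_t\to0$ entirely by a sandwich. For the lower bound it solves the elliptic Dirichlet problem with small constant data $\epsilon$ (Lemma \ref{lem_Dirichletbvpscequn1}) to produce a genuine subsolution $w\leq\epsilon$ below $u_0$, runs the flow $\tilde u$ from $w$ with nondecreasing boundary data $\tilde\phi\leq\phi$; Proposition \ref{prop_prop1} (monotonicity via the maximum principle applied to $\tilde u_t$, which has nonnegative initial \emph{and} boundary data) makes $\tilde u$ increase to $u_\infty$, and the comparison principle gives $u\geq\tilde u$. For the upper bound it sets $\xi=u-u_\infty$, observes $\xi\to-\infty$ at $\partial M$ so that $\sup_{M^\circ}\xi$ is attained in the interior and the boundary data never enters, and applies Hamilton's trick to get $(\tfrac{d}{dt})_+\sup\xi\leq-c\sup\xi$ with $c>0$ coming from $R_g\geq0$ and $\inf_M u_\infty>0$, whence $\limsup_{t\to\infty}\sup_{M^\circ}(u-u_\infty)\leq0$. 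The two bounds pinch $u$ to $u_\infty$ locally uniformly, and interior parabolic estimates upgrade this to $C^2_{loc}$. If you want to salvage your route, you would need either to restrict to boundary data with $\phi_t\to0$ (weaker than the theorem) or to import precisely this sandwich, at which point the subsequential-limit machinery becomes unnecessary.
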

Here we provide an example of the construction of $\phi$ in Theorem \ref{thm_scalarpositiveupperb}. For any given data $u_0\in C^{2+\alpha,1+\frac{\alpha}{2}}(M)$, by Theorem \ref{thm_bddataextension1} (this should be a standard result), there exists a function $\xi\in C^{2+\alpha,1+\frac{\alpha}{2}}(\partial M\times[0,\epsilon])$ for some $\epsilon>0$ such that $\xi$ satisfies the  compatibility condition $(\ref{equn_compatibleequn1-2})$ with the initial data $u_0$. Let $c=\inf_M u_0$. By continuity, there exists $\epsilon_1\in (0,\epsilon)$, such that $\xi(p,t)\geq \frac{c}{2}$ for $(p,t)\in \partial M\times[0,\epsilon_1]$. Pick up a smooth cut off function $\eta\in C^{\infty}(\mathbb{R})$, such that $\eta(t)=0$ for $0\leq t\leq \frac{\epsilon_1}{2}$, $\eta(t)=1$ for $t\geq \epsilon_1$, and $0\leq \eta(t)\leq 1$ for $\frac{\epsilon_1}{2}\leq t\leq \epsilon_1$. Define
\begin{align*}
\phi(p,t)=(1-\eta(t))\xi(p,t)+ \eta(t)f(t),
\end{align*}
for $(p,t)\in \partial M\times[0,\infty)$, where we can take the function $f(t)=c e^t$ for $t\geq0$, or $f(t)=c(t+1)$. It is easy to check that $\phi$ satisfies all the conditions in Theorem \ref{thm_scalarpositiveupperb}. For the more general construction of $\phi$, see the proof of Theorem \ref{thm_scalarpositiveupperb} in Page 13.

To achieve the conclusion of Theorem \ref{thm_scalarpositiveupperb}, when the initial data is a solution or a strict subsolution to $(\ref{equn_scalarcurvatureequn})$, we construct $\phi$ satisfying $(\ref{equn_compatibleequn1-2})$ which is increasing in a certain speed (see $(\ref{ineqn_bdvcond1})$) to infinity as $t\to\infty$, and we show that the solution is increasing and converges to the solution of the Loewner--Nirenberg problem, see Lemma \ref{lem_monotonocityscalar1} to Proposition \ref{prop_prop1}. In particular, $u(\cdot,t)$ is a sub-solution to $(\ref{equn_scalarcurvatureequn})$ for each $t\geq0$. Then for general positive initial data, we construct $\phi$ which satisfies $(\ref{equn_compatibleequn1-2})$ and increases in a certain speed (see $(\ref{ineqn_bdvcond1})$) for $t$ large, and we can first solve a Cauchy--Dirichlet problem with smaller initial data which is a subsolution to $(\ref{equn_scalarcurvatureequn})$, and hence the solution $u_1$ to this new problem is increasing and converges to the solution of the Loewner--Nirenberg problem; and then  we use $u_1$ to give the lower bound of the flow with general initial data by maximum principle, and we use Hamilton's technique in \cite{Hamilton} to derive the convergence of the flow. We provide examples of the compact manifolds with boundary with positive scalar curvature in Section \ref{section3}, and we obtain that on a smooth compact manifold $(M,g)$ with boundary, there exists a positive scalar curvature metric in the conformal class if and only if $\displaystyle\inf_{u\in C^1(M),\,u\not\equiv 0\,\text{on}\,\partial M}Q(u)>-\infty$, where
  \begin{align}\label{equn_energy123-1-1}
 Q(u)=\frac{\int_M(|\nabla u|^2+\frac{n-2}{4(n-1)}R_gu^2)dV_g+\frac{n-2}{2}\int_{\partial M}H_g u^2 dS}{\big(\int_{\partial M}|u|^{\frac{2(n-1)}{n-2}}dS\big)^{\frac{n-2}{n-1}}},
 \end{align}
 is the energy functional of the second type Escobar-Yamabe problem $(\ref{equn_EY2})$, see Theorem \ref{thm_pscalar1}.

For a general conformal class $(M,[g])$, we have the  well-known existence result for the Dirichlet boundary value problem of the Yamabe equation \begin{align}\label{equn_scalar2-1-0}
\begin{split}
&\frac{4(n-1)}{n-2}\Delta_g w-R_gw-n(n-1)w^{\frac{n+2}{n-2}}=0,\\
&w\big|_{\partial M}=\varphi_0,
\end{split}
\end{align}
 see \cite{ML} (also, \cite{Struwe}), the proof of which is by a direct variational method, in seek of a minimizer of the corresponding energy functional, and the key point is to show that the minimizer is a positive function which is similar to the argument in Appendix \ref{Appendix1}.
\begin{lem}\label{lem_Dirichletbvpscequn1} (\cite{ML})
Let $(M,g)$ be a compact Riemannian manifold of class $C^{k+2,\alpha}$ with boundary. For any positive function $\varphi_0\in C^{k+2,\alpha}(\partial M)$ with $k\geq0$, there exists a unique positive solution $w\in C^{k+2,\alpha}(M)$ to the Dirichlet boundary value problem $(\ref{equn_scalar2-1-0})$. In particular, $R_{w^{\frac{4}{n-2}}g}=-n(n-1)$.
\end{lem}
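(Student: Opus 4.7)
The plan is to produce $w$ as a minimizer of the natural energy functional and then bootstrap this weak solution to a classical positive $C^{k+2,\alpha}$ solution.

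\textbf{Existence by the direct method.} Extend $\varphi_0$ to $\bar\varphi_0\in C^{k+2,\alpha}(M)$ and consider the functional
\begin{equation*}
E(u)=\int_M\Bigl(\tfrac{2(n-1)}{n-2}|\nabla_g u|^2+\tfrac{1}{2}R_g u^2\Bigr)\,dV_g+\tfrac{(n-1)(n-2)}{2}\int_M|u|^{\frac{2n}{n-2}}\,dV_g
\end{equation*}
on the affine space $\mathcal{A}=\{u\in H^1(M):u-\bar\varphi_0\in H^1_0(M)\}$; its Euler--Lagrange equation is precisely $(\ref{equn_scalar2-1-0})$. The sign of $R_g$ is irrelevant for coercivity: combining H\"older's inequality $\|u\|_2^2\le \mathrm{Vol}_g(M)^{2/n}\|u\|_{2n/(n-2)}^2$ with Young's inequality $\|u\|_{2n/(n-2)}^2\le\delta\|u\|_{2n/(n-2)}^{2n/(n-2)}+C_\delta$ absorbs $\int R_g u^2$ into the critical $L^{2n/(n-2)}$ term for $\delta$ small, giving $E(u)\ge c_1\|u-\bar\varphi_0\|_{H^1_0}^2-c_2$. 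Weak lower semicontinuity of the gradient and quadratic terms is standard; for the critical $L^{2n/(n-2)}$ term I extract from a minimizing sequence an $H^1$-weak and a.e.\ convergent subsequence (the latter via Rellich at any subcritical exponent) and apply Fatou, which is legitimate precisely because this term enters with the favourable sign. This produces a minimizer $w\in\mathcal{A}$ which, by the first variation, is a weak solution of $(\ref{equn_scalar2-1-0})$.

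\textbf{Non-negativity, regularity, and positivity.} Since $|\nabla|w||=|\nabla w|$ a.e.\ (Stampacchia) and $|\varphi_0|=\varphi_0$ on $\partial M$, we have $|w|\in\mathcal{A}$ with $E(|w|)=E(w)$, so without loss of generality $w\ge 0$. The only obstruction to upgrading $w\in H^1$ to smooth is the critical growth of $w^{(n+2)/(n-2)}$; this is handled by a Brezis--Kato/Moser iteration applied to the weak equation, yielding $w\in L^\infty(M)$. Standard $L^p$-theory then gives $w\in W^{2,p}$ for all $p<\infty$, hence $w\in C^{1,\alpha}$; linear Schauder estimates (interior and, thanks to the $C^{k+2,\alpha}$ regularity of $\partial M$ and the data $\bar\varphi_0$, up to the boundary) bootstrap $w\in C^{k+2,\alpha}(M)$. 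For strict positivity, rewrite the PDE as
\begin{equation*}
\tfrac{4(n-1)}{n-2}\Delta_g w-R_g w=n(n-1)w^{(n+2)/(n-2)}\ge 0\quad\text{in }M^{\circ};
\end{equation*}
since $w\ge 0$ and $w=\varphi_0>0$ on $\partial M$, the strong maximum principle (valid for non-negative solutions of such inequalities with bounded zeroth order term of arbitrary sign) forces $w>0$ in $M^\circ$.

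\textbf{Uniqueness.} Let $w_1,w_2$ be two positive solutions with common boundary data $\varphi_0$ and set $\psi=\log w_1-\log w_2$, continuous on $M$ and vanishing on $\partial M$. If $\psi$ attains an interior maximum with $\psi(p)>0$, then $\nabla\psi(p)=0$ and $\Delta\psi(p)\le 0$. Using $\Delta\log w=\Delta w/w-|\nabla\log w|^2$ and the cancellation $\nabla\log w_1(p)=\nabla\log w_2(p)$, this reduces to $\Delta w_1/w_1(p)\le\Delta w_2/w_2(p)$; substituting the PDEs satisfied by $w_1,w_2$ collapses the $R_g$ terms and leaves $w_1(p)^{4/(n-2)}\le w_2(p)^{4/(n-2)}$, i.e.\ $w_1(p)\le w_2(p)$, contradicting $\psi(p)>0$. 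Swapping the roles of $w_1$ and $w_2$ gives $\psi\equiv 0$.

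\textbf{Main obstacle.} The critical Sobolev exponent $\frac{2n}{n-2}$ is the single genuine difficulty: Rellich compactness fails at this exponent, so weak lower semicontinuity of the $L^{2n/(n-2)}$ term rests not on strong convergence but on its favourable sign via Fatou, and correspondingly the $L^\infty$-bound that feeds the regularity bootstrap cannot be obtained by Sobolev embedding and must come from a Moser-type iteration. The defocusing sign of the nonlinearity makes every other step---coercivity, non-negativity of minimizers, strict positivity, and uniqueness---essentially routine.
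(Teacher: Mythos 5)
Your proof is correct and follows essentially the same route the paper indicates for this cited result (direct variational method for the energy functional, positivity of the minimizer, regularity bootstrap, and a maximum-principle comparison for uniqueness, as sketched in the text and carried out for the homogeneous case in Appendix A). The only differences are cosmetic: the paper normalizes to $R_g=-n(n-1)$ before minimizing and proves uniqueness via the quotient $v=u_1/u_2$ rather than $\log w_1-\log w_2$.
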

When $\varphi_0=0$, we show in Appendix \ref{Appendix1} that the Dirichlet boundary value problem $(\ref{equn_scalar2-1-0})$ has a nontrivial solution $v_0$ with $v_0>0$ in $M^{\circ}$ and $v_0=0$ on $\partial M$  when $\lambda_1(L_g)<0$, where $\lambda_1(L_g)$ is the first eigenvalue of the Dirichlet boundary value problem of the conformal Laplacian operator $L_g=-\frac{4(n-1)}{n-2}\Delta_g +R_g$. We can take $v_0$ to be the largest nontrivial solution. Then for any positive initial data $u_0>v_0$, by Appendix \ref{Appendix1}, there exists a solution $\bar{u}$ to $(\ref{equn_scalarcurvatureequn})$ with $v_0<\bar{u}<u_0$ on $M$. Then we take $\bar{g}=\bar{u}^{\frac{4}{n-2}}g$ as the background metric and run the direct flow and obtain the following convergence theorem.
\begin{thm}\label{thm_generaldirectflowdiscussion1}
Let $(M,g)$ be a compact Riemannian manifold with boundary of class $C^{4,\alpha}$.  Let $u_0\in C^{2,\alpha}(M)$ be any positive function such that $u_0>v_0$ on $M$, where $v_0$ is the largest solution to the homogeneous Dirichlet boundary value problem of the Yamabe equation
\begin{align*}
&\frac{4(n-1)}{n-2}\Delta u-R_gu-n(n-1)u^{\frac{n+2}{n-2}}=0,\,\,\,\text{in}\,\,M,\\
&u(p)=0,\,\,\,\text{for}\,\,p\in\partial M.
\end{align*}
 Then there exists a direct flow $g(t)$ starting from $g_0=u_0^{\frac{4}{n-2}}g$ and converging in $C_{loc}^2(M^{\circ})$ to $g_{\infty}=u_{LN}^{\frac{4}{n-2}}g$, where $u_{LN}$ the solution to the Loewner--Nirenberg problem $(\ref{equn_scalarcurvatureequn})-(\ref{equn_blowingupboundarydata})$.
\end{thm}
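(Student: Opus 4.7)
The plan is to transfer the problem to a conformal background in which a simple reference solution is available, and then re-run the subsolution-based convergence scheme already developed in the paper. By Appendix~\ref{Appendix1} I first pick a solution $\bar u\in C^{2,\alpha}(M)$ of $(\ref{equn_scalarcurvatureequn})$ with $v_0<\bar u<u_0$ on $M$; in particular $\bar u>0$ on $\partial M$. I then pass to $\bar g:=\bar u^{4/(n-2)}g$, which by the conformal covariance of the conformal Laplacian satisfies $R_{\bar g}\equiv -n(n-1)$, and under which the substitution $u=\bar u\tilde u$ identifies $(\ref{equn_scalarcurvatureequn})$ on $(M,g)$ with the same-form equation on $(M,\bar g)$ and identifies the Loewner--Nirenberg solution $u_{LN}$ with $\tilde u_{LN}:=u_{LN}/\bar u$. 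In the new variables the initial datum is $\tilde u_0=u_0/\bar u>1$, and crucially the constant function $\tilde u\equiv 1$ is a solution (hence a subsolution) of $(\ref{equn_scalarcurvatureequn})$ on $(M,\bar g)$; it plays the role in this setting that the small constant/subsolution plays under the hypothesis $R_g\ge 0$ in Theorem~\ref{thm_scalarpositiveupperb}.

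With this set-up I would run the direct flow on $(M,\bar g)$. First, I solve an auxiliary Cauchy--Dirichlet problem $(\ref{equn_scalar1})$--$(\ref{equn_bdv1})$ in $\bar g$ with $\tilde u_1(\cdot,0)\equiv 1$ and a boundary datum $\tilde\phi_1(p,t)\to\infty$ satisfying $(\ref{equn_compatibleequn1-2})$ and $(\ref{ineqn_bdvcond1})$; because the initial datum solves $(\ref{equn_scalarcurvatureequn})$, the monotonicity/convergence argument of Lemma~\ref{lem_monotonocityscalar1} through Proposition~\ref{prop_prop1}---whose proofs only use that the initial datum is a solution or strict subsolution, not the sign of the background scalar curvature---gives $\partial_t\tilde u_1\ge 0$ and $\tilde u_1(\cdot,t)\to\tilde u_{LN}$ in $C^2_{loc}(M^\circ)$. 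I then run the actual flow $\tilde u$ in $\bar g$ starting from $\tilde u_0$, with a boundary datum $\tilde\phi\ge\tilde\phi_1$ obeying $(\ref{equn_compatibleequn1-2})$--$(\ref{ineqn_bdvcond1})$; long-time existence is Lemma~\ref{lem_longtimeexistence}, and the parabolic comparison principle (applied with $\tilde u_0>1=\tilde u_1(\cdot,0)$ and $\tilde\phi\ge\tilde\phi_1$) yields $\tilde u\ge\tilde u_1$ on $M\times[0,\infty)$, hence the pointwise lower bound $\liminf_{t\to\infty}\tilde u\ge\tilde u_{LN}$.

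For the matching upper bound I would apply the interior upper bound (Lemma~\ref{lem_intupperboundestimates}), the asymptotic boundary blow-up lower bound (Lemma~\ref{lem_asymlowbdnearbd1}) and the parabolic Harnack inequality in $(M,\bar g)$ to get uniform $L^\infty_{loc}(M^\circ)$ estimates for $\tilde u$; standard parabolic Schauder theory then promotes these to uniform $C^{2,\alpha}_{loc}(M^\circ)$ bounds. Along any $t_k\to\infty$, a subsequence then converges in $C^2_{loc}(M^\circ)$ to some $\tilde u_\infty$ solving $(\ref{equn_scalarcurvatureequn})$ and, by Lemma~\ref{lem_asymlowbdnearbd1}, blowing up at $\partial M$, so uniqueness of the Loewner--Nirenberg solution \cite{ACF} forces $\tilde u_\infty=\tilde u_{LN}$; Hamilton's technique from \cite{Hamilton}, used exactly as in the proof of Theorem~\ref{thm_scalarpositiveupperb}, upgrades this subsequential convergence to full convergence. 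The metric flow $g(t):=\tilde u(t)^{4/(n-2)}\bar g=u(t)^{4/(n-2)}g$ is then the desired direct flow starting at $g_0$ and converging in $C^2_{loc}(M^\circ)$ to $g_\infty=u_{LN}^{4/(n-2)}g$. The main obstacle is running the Hamilton-type monotonicity under the mixed features of this setting (negative background scalar curvature on $(M,\bar g)$ together with blow-up boundary data); every other step is a direct transcription, under the conformal change $g\mapsto\bar g$, of ingredients already developed for Theorem~\ref{thm_scalarpositiveupperb}.
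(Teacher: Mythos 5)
Your proposal follows essentially the same route as the paper: pass to $\bar g=\bar u^{4/(n-2)}g$ with $R_{\bar g}=-n(n-1)$ via the Appendix \ref{Appendix1} solution $v_0<\bar u<u_0$, reduce to initial data $\tilde u_0=u_0/\bar u>1$, bound from below by the monotone flow started at $1$ (the paper's Lemma \ref{lem_generalconformalclassconvergence1} and Lemma \ref{lem_genc}), and close the upper bound with Hamilton's technique. The ``main obstacle'' you flag is in fact immediate: since $\tilde u\ge1$ and $\tilde u_{LN}\ge1$ by the maximum principle, the zeroth-order coefficient $\beta=1-\tfrac{n+2}{n-2}\theta^{4/(n-2)}$ in the equation for $\tilde u-\tilde u_{LN}$ is strictly negative, so the Hamilton argument transcribes verbatim (whereas your intermediate claim that subsequential parabolic limits automatically solve the stationary equation is not needed and would require separate justification).
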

We remark that when the initial data is a subsolution to $(\ref{equn_scalarcurvatureequn})$, one can consider Theorem \ref{thm_scalarpositiveupperb} and Theorem \ref{thm_generaldirectflowdiscussion1} as the parabolic parallel to that of the elliptic approach to the Loewner-Nirenberg problem. Here our condition on the initial data for the convergence of the flow is pretty general. For any given data $u_0\in C^{2+\alpha,1+\frac{\alpha}{2}}(M)$ such that $u_0>v_0$, after we pick up the background metric $\bar{g}\in[g]$, we can choose $\phi$ in Theorem \ref{thm_generaldirectflowdiscussion1} in a similar way as the construction below Theorem \ref{thm_scalarpositiveupperb}, see proof of Theorem \ref{thm_generaldirectflowdiscussion1} in Page 17 and Lemma \ref{lem_genc}.

Notice that in Theorem \ref{thm_generaldirectflowdiscussion1}, $v_0=0$ by maximum principle when $R_h\geq0$ for some conformal metric $h\in[g]$. The first eigenvalue of the conformal Laplacian $L_g$ of the Dirichlet boundary value problem satisfies $\lambda_1(L_g)>0$ iff there exists $h\in[g]$ such that $R_h>0$, see Section \ref{section3}. To prove Theorem \ref{thm_generaldirectflowdiscussion1}, when the background metric $g$ satisfies $R_g=-n(n-1)$ and $u_0=1$, we choose increasing boundary data $\phi$ such that $\phi=1$, $\phi_t=\phi_{tt}=0$ on $\partial M\times\{0\}$, $\phi\to\infty$ as $t\to\infty$ and $\phi$ satisfies $(\ref{ineqn_bdvcond1})$, and show that the flow is monotone and converges to the Loewner--Nirenberg metric, see Lemma \ref{lem_generalconformalclassconvergence1}; while for general $g$ and $u_0>v_0$, let $\bar{u}$ be a solution to $(\ref{equn_scalarcurvatureequn})$ with $v_0<\bar{u}<u_0$ on $M$, and we take $\bar{g}=\bar{u}^{\frac{4}{n-2}}g$ as the background metric, and run the direct flow, see proof of Theorem \ref{thm_generaldirectflowdiscussion1} in Page 17. In particular, we let the initial data be $\eta=\frac{u_0}{\bar{u}}>1$ and construct a compatible boundary data $\phi$ satisfying $(\ref{ineqn_bdvcond1})$, and then use a solution to the Cauchy--Dirichlet problem $(\ref{equn_scalar1})-(\ref{equn_bdv1})$ with the background metric $\bar{g}$, initial data $1$ and increasing boundary data $\tilde{\phi}\leq \phi$ to give a lower bound, and we use Hamilton's technique in \cite{Hamilton} again for the upper bound control to conclude the convergence of the flow.

The (renormalized) Yamabe flow introduced by R. Hamilton is determined by the equation
 \begin{align*}
 \frac{\partial g}{\partial t}=-(R_g-\overline{R_g})g
 \end{align*}
 where $\overline{R_g}$ is the average of the scalar curvature $g$. It has been studied as an alternative approach to the Yamabe problem on closed manifolds, see \cite{Chow}\cite{Ye}\cite{SS}\cite{Brendle}\cite{Brendle1}, etc. It is also used in the study of general prescribed scalar curvature equation, see \cite{Struwe1}\cite{ChenXu}, etc. For Yamabe flow on manifolds with boundary on the Neumann type boundary problems posed by Escobar, see \cite{Brendle3}\cite{ChenHoSun}, etc. On complete non-compact Riemannian manifolds, there are also many works on the long time existence and convergence of the Yamabe flow, see \cite{CM}\cite{ML19}, etc. On compact manifolds with incomplete edge singularities Yamabe flow is studied in \cite{BV}, etc. see also \cite{Sch2}\cite{Shao} and the references there for Yamabe flow on incomplete manifolds. In \cite{Sch1}, an instantaneously complete Yamabe flow was derived in the conformal class of the hyperbolic space. In \cite{Sch2}, the author studied the relationship between the dimension of the singular sub-manifold and instantaneous completeness and incompleteness of the flow.

 Notice that the Yamabe flow is conformally covariant and it makes no difference which metric in the conformal class is chosen as the background metric of the flow. By Lemma \ref{lem_Dirichletbvpscequn1}, we take a conformal metric $g$ with $R_g=-n(n-1)$ in the conformal class $(M,[g])$ as the background metric. Now we introduce the Cauchy--Dirichlet problem of the Yamabe flow
\begin{align}\label{equn_scalarflow2}
\begin{split}
&(u^{\frac{n+2}{n-2}})_t=\frac{(n-1)(n+2)}{n-2}(\Delta_gu+\frac{n(n-2)}{4}(u-u^{\frac{n+2}{n-2}})),\\
&u(q,0)=u_0(q),\,\,q\in M,\\
&u(q,t)=\phi(q,t),\,\,(q,t)\in \partial M\times [0,+\infty),
\end{split}
\end{align}
where $\phi\to \infty$ as $t\to\infty$, to get a natural connection between a metric $g$ continuous up to the boundary on $M$ with the complete Loewner--Nirenberg metric in the conformal class. For the convergence, we have the following theorem.
\begin{thm}\label{thm_YamabeflowC4settingconvt1}
Let $(M^n,g)$ be a compact Riemannian manifold with boundary of class $C^{4,\alpha}$ and $R_g=-n(n-1)$. Let $u_0\in C^{4,\alpha}(M)$ with $u_0\geq1$ be a subsolution of the equation $(\ref{equn_scalarcurvatureequn})$ and satisfies
\begin{align}\label{inequn_comptC4increasing1-1}
L(\mu)\geq0
\end{align}
at the points $q\in \partial M$ such that $\mu=0$, where the function $\mu$ is defined as
\begin{align*}
\mu(p)=(n-1)u_0(p)^{-\frac{4}{n-2}}[\Delta_g u_0(p)+\frac{n(n-2)}{4}(u_0(p)-u_0(p)^{\frac{n+2}{n-2}})],
\end{align*}
and the linear operator $L$ is defined as
\begin{align*}
L(\mu)=\Delta_g\mu +\frac{n(n-2)}{4}(1-\frac{n+2}{n-2}u_0(p)^{\frac{4}{n-2}})\mu.
\end{align*}
Assume $\phi\in C_{loc}^{4+\alpha,2+\frac{\alpha}{2}}(M\times[0,\infty))$ satisfies the compatibility condition
  \begin{align*}
&u_0(p)=\phi(p,0),\,\,\phi(p,0)^{\frac{4}{n-2}}\phi_t(p,0)=(n-1)[\Delta_g u_0(p)+\frac{n(n-2)}{4}(u_0(p)-u_0(p)^{\frac{n+2}{n-2}})],\\
&\phi(p,0)^{\frac{4}{n-2}}\phi_{tt}(p,0)+\frac{4}{n-2}\phi(p,0)^{\frac{6-n}{n-2}}(\phi_t(p,0))^2=(n-1)L(\mu),
\end{align*}
 for $p\in \partial M$, and $\phi_t\geq0$ on $\partial M\times[0,\infty)$. Moreover, assume $\liminf_{t\to\infty}\inf_{\partial M}\phi=\infty$ and $\phi$ satisfies
  \begin{align}\label{inequn_bdcondition3-1}
\phi^{-1}\phi_t\leq \beta
\end{align}
for some constant $\beta>0$ on $\partial M\times[T,\infty)$ for some $T>0$, and
 \begin{align*}
\phi^{\frac{n-1}{2-n}}|\nabla_{g}\phi|\to 0,\,\,\,\,\phi^{\frac{n}{2-n}}|\nabla_{g}^2\phi|\to0,\,\,\text{uniformly on}\,\,\,\partial M, \,\,\text{as}\,\,t\to+\infty.
 \end{align*}
 Then there exists a unique positive solution $u$ to $(\ref{equn_scalarflow2})$ on $M\times[0,+\infty)$ with $u\in C_{loc}^{4+\alpha,2+\frac{\alpha}{2}}(M\times[0,\infty))$, and $u\to u_{LN}$ in $C_{loc}^4(M^{\circ})$ as $t\to\infty$, where $u_{LN}$ is the solution to the Loewner--Nirenberg problem $(\ref{equn_scalarcurvatureequn})-(\ref{equn_blowingupboundarydata})$.
\end{thm}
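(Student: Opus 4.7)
The plan is to combine short-time existence, a priori upper and lower bounds, monotonicity of the flow, uniform interior regularity, and a boundary barrier argument. Upon dividing by $\frac{n+2}{n-2}u^{4/(n-2)}$ (positive as long as $u>0$), the Yamabe flow $(\ref{equn_scalarflow2})$ takes the uniformly parabolic quasilinear form
\[u_t=(n-1)u^{-\frac{4}{n-2}}\bigl(\Delta_g u+\tfrac{n(n-2)}{4}(u-u^{\frac{n+2}{n-2}})\bigr),\]
for which standard quasilinear parabolic theory, together with the three compatibility conditions imposed at $\partial M\times\{0\}$, gives short-time existence and regularity in $C_{loc}^{4+\alpha,2+\alpha/2}$.

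For the a priori estimates: since $R_g=-n(n-1)$, the constant $1$ is a stationary solution, and comparison with it together with $u_0\geq 1$ and $\phi\geq u_0\geq 1$ (the latter from $\phi_t\geq 0$ and the initial compatibility $\phi(\cdot,0)=u_0$) produces $u\geq 1$. The subsolution hypothesis on $u_0$ combined with elliptic comparison against the Loewner--Nirenberg solution $u_{LN}$ (which has $+\infty$ boundary values) yields $u_0\leq u_{LN}$ in $M^{\circ}$, and parabolic comparison on an exhaustion $\{x\geq\epsilon\}\times[0,T]$, $\epsilon\downarrow 0$, then gives $u\leq u_{LN}$. For monotonicity, differentiating the flow in $t$ shows that $\mu:=u_t$ satisfies
\[u^{\frac{4}{n-2}}\mu_t=(n-1)\Delta_g\mu+(n-1)\tfrac{n(n-2)}{4}\bigl(1-\tfrac{n+2}{n-2}u^{\frac{4}{n-2}}\bigr)\mu-\tfrac{4}{n-2}u^{\frac{6-n}{n-2}}\mu^2.\]
The subsolution condition on $u_0$ gives $\mu|_{t=0}\geq 0$, the hypothesis $\phi_t\geq 0$ gives $\mu|_{\partial M}\geq 0$, and the second-order compatibility condition $(\ref{inequn_comptC4increasing1-1})$ ensures $\mu_t\geq 0$ at the corner points of $\partial M\times\{0\}$ where $\mu=0$. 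These inputs license a maximum-principle argument, exploiting $u\geq 1$ to bound the reaction coefficients and the favorable sign of $-\frac{4}{n-2}u^{-1}\mu^2$ after the usual exponential rescaling, to conclude $\mu\geq 0$ globally. The bounds $1\leq u\leq u_{LN}$ then preclude finite-time singularities and yield long-time existence.

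For the long-time limit, on any compact $K\Subset M^{\circ}$ the bound $1\leq u\leq\sup_K u_{LN}<\infty$ keeps the equation uniformly parabolic, so interior Schauder estimates provide uniform $C^{4+\alpha,2+\alpha/2}$ bounds on $K\times[0,\infty)$. Monotonicity produces a pointwise limit $u_{\infty}=\lim_{t\to\infty}u(\cdot,t)$ which, by the uniform regularity and Arzel\`a--Ascoli, is attained in $C^4_{loc}(M^{\circ})$. Since $u$ is bounded and monotone, $\int_0^{\infty}u_t\,dt<\infty$ pointwise, and the uniform regularity upgrades this to $u_t\to 0$ uniformly on $K$; passing to the limit in the flow equation then shows that $u_{\infty}$ solves the Yamabe equation $(\ref{equn_scalarcurvatureequn})$ in $M^{\circ}$. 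To identify $u_{\infty}=u_{LN}$, by the uniqueness of the Loewner--Nirenberg solution it suffices to show $u_{\infty}\to\infty$ at $\partial M$; here I would construct, in Fermi coordinates $(y,x)$ near $\partial M$, a subsolution barrier of Loewner--Nirenberg form $A(t)(x+B(t))^{-(n-2)/2}$ calibrated by the hypotheses $\phi\to\infty$, $\phi^{-1}\phi_t\leq\beta$, and the decay rates imposed on $\phi^{(n-1)/(2-n)}|\nabla_g\phi|$ and $\phi^{n/(2-n)}|\nabla_g^2\phi|$, so that the tangential and time error terms in the linearization are negligible and the barrier stays below $u$ on the parabolic boundary of a collar. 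Parabolic comparison and $t\to\infty$ then force the blow-up of $u_{\infty}$ at $\partial M$.

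The two main obstacles are: (i) propagating $\mu\geq 0$ past the quadratic nonlinearity $-u^{-1}\mu^2$ in the $\mu$-equation, which is precisely where the assumptions $u_0\geq 1$ and the second-order compatibility $(\ref{inequn_comptC4increasing1-1})$ are used in an essential way; and (ii) calibrating the Loewner--Nirenberg-type lower barrier so that its time derivative is absorbed by the tangential error terms at the rates allowed by $(\ref{inequn_bdcondition3-1})$ and the decay conditions on derivatives of $\phi$. These two steps are where the specific form of the hypotheses of Theorem \ref{thm_YamabeflowC4settingconvt1} enters decisively, and everything else reduces to standard quasilinear parabolic machinery combined with the uniqueness of the Loewner--Nirenberg solution.
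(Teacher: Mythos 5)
Your proposal follows essentially the same route as the paper: long-time existence and $u\geq1$ by the maximum principle, monotonicity $u_t\geq0$ by differentiating the flow and applying the maximum principle to the resulting equation for $u_t$, interior upper bounds by comparison with $u_{LN}$ (the paper uses the preserved sub-solution property or local super-solutions where you use an exhaustion, but both work), uniform interior Schauder estimates, and identification of the limit via a Loewner--Nirenberg-type lower barrier near $\partial M$ calibrated by the hypotheses on $\phi$, exactly as in the paper's Lemma \ref{lem_flow2boundarylowerbound}. One small correction: the quadratic term $-\frac{4}{n-2}u^{\frac{6-n}{n-2}}u_t^2$ does \emph{not} have a favorable sign for proving $u_t\geq0$; it must be absorbed, either by treating it as a linear term with bounded coefficient (after which your exponential rescaling works) or, as the paper does, by a first-touch argument in which $|u_t|$ is small at the first negative infimum so that the linear term $\frac{n(n-1)(n+2)}{4}\bigl(1-\frac{n+2}{n-2}u^{\frac{4}{n-2}}\bigr)u_t$, strictly positive there because $u\geq1$, dominates.
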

Notice that the condition $(\ref{inequn_comptC4increasing1-1})$ is to guarantee the $C^{4+\alpha,2+\frac{\alpha}{2}}$ regularity of the solution $u$ at $\partial M\times\{0\}$ and the condition $\phi_t\geq0$ for $t\geq0$, and it holds automatically if $u_0$ is a strict subsolution to $(\ref{equn_scalarcurvatureequn})$ in a neighborhood of $\partial M$, or $u_0$ is a solution to $(\ref{equn_scalarcurvatureequn})$ in a neighborhood of $\partial M$, for instance $u_0=1$ on $M$. When $\phi=\log(t),\,t,\,t^2,\,e^{t}$ or $te^t$ for $t$ large, $\phi$ satisfies $(\ref{inequn_bdcondition3-1})$ for some constant $\beta>0$. For given initial data $u_0$ in Theorem \ref{thm_YamabeflowC4settingconvt1}, we now give a construction of the boundary data $\phi$ satisfying all the requirement in the theorem: By by Theorem \ref{thm_bddataextension1} and the assumption $(\ref{inequn_comptC4increasing1-1})$, there exists a function $\xi\in C^{4+\alpha,2+\frac{\alpha}{2}}(\partial M\times[0,\epsilon])$ for some $\epsilon>0$ such that $\xi$ satisfies the  compatibility condition with the initial data $u_0$ (a subsolution to $(\ref{equn_scalarcurvatureequn})$) on $\partial M\times\{0\}$ and $\xi_t>0$ on $\partial M\times(0,\epsilon_0]$ for some $\epsilon_0\in(0,\epsilon)$. Let $c=\sup_{\partial M\times\{\epsilon_0\}} \xi$. Pick up a smooth nondecreasing cut off function $\eta\in C^{\infty}(\mathbb{R})$, such that $\eta(t)=0$ for $0\leq t\leq \frac{\epsilon_0}{2}$, $\eta(t)=1$ for $t\geq \epsilon_0$, and $0\leq \eta(t)\leq 1$ for $\frac{\epsilon_1}{2}\leq t\leq \epsilon_1$. Define
\begin{align*}
\phi(p,t)=(1-\eta(t))\xi(p,t)+ \eta(t)f(t),
\end{align*}
for $(p,t)\in \partial M\times[0,\infty)$, where we can take the function $f(t)=c C e^t$ or $f(t)=c C(t+1)$ for $t\geq0$, for some large constant $C>0$ to be determined. Now we take $C>0$ large so that $f(t)-\xi(p,t)>0$ for $0\leq t\leq \epsilon_0$ and hence $\phi_t\geq 0$ for $t\ge0$. It is easy to check that $\phi$ satisfies all the conditions in Theorem \ref{thm_YamabeflowC4settingconvt1}. By the construction of $u_0$ and $\phi$ and Theorem \ref{thm_YamabeflowC4settingconvt1}, there always exists a Yamabe flow converging to the Loewner-Nirenberg solution in each conformal class.

The strategy of the proof of Theorem \ref{thm_YamabeflowC4settingconvt1} is similar as that of Theorem \ref{thm_generaldirectflowdiscussion1}, but more is involved because of the nonlinear term on the left hand side of the Yamabe flow equation. Notice that we do not show the convergence of the Yamabe flow for general initial data, because when using Hamilton's technique in \cite{Hamilton} to get the upper bound of $u$, we are not able to show that $\limsup_{t\to\infty}\sup_{M^{\circ}}(u-u_{LN})\leq0$, although  $\sup_{M^{\circ}}(u(\cdot,t)-u_{LN}(\cdot))$ is decreasing in $t$, where $u_{LN}$ is the solution to the Loewner--Nirenberg problem $(\ref{equn_scalarcurvatureequn})-(\ref{equn_blowingupboundarydata})$.

The Cauchy-Dirchlet problem of the Yamabe flow is generalized to the Cauchy-Dirchlet problem of $\sigma_k$-Ricci flow in \cite{GL}, i.e., a flow approach to the generalized Loewner--Nirenberg problem of the fully nonlinear equations in \cite{Guan} and \cite{GSW}, where the fact the sub-solution property is preserving along the flow also plays an important role for the convergence of the flow.

It would be interesting to know whether the direct flow converges to the solution to Loewner--Nirenberg problem when $u_0<v_0$ somewhere in $M^{\circ}$ when $\lambda_1(L_g)<0$.

\vskip0.2cm
{\bf Acknowledgements.} The author would like to thank Wei Yuan, Xiaoyang Chen, Jian Ge and Professor Yuguang Shi for helpful discussion. The author would like to thank the referees for their comments which make the arguments in the paper clearer and more readable, especially for the generality of the initial and boundary data and an alternative proof of Theorem 3.2.

\section{A Direct Flow}
Let $(M^n,g)$ be a smooth compact Riemannian manifold with boundary $\partial M$, with $g\in C^{4,\alpha}$. We consider the direct flow, i.e. the Cauchy--Dirichlet problem $(\ref{equn_scalar1})-(\ref{equn_bdv1})$ with $\displaystyle\lim_{t\to +\infty}\phi(q,t)=+\infty$, uniformly. To guarantee the the solution $u$ is in $C^{2+\alpha,1+\frac{\alpha}{2}}(M\times[0,T_0])$ for some $T_0>0$ and $0<\alpha<1$, we need the compatibility condition
$(\ref{equn_compatibleequn1-2})$ with $u_0\in\,C^{2,\alpha}(M)$ and $\phi\in C^{2,\alpha}(\partial M\times[0,T])$ for all $T>0$. Moreover, in order that $u\in C^{4+\alpha,2+\frac{\alpha}{2}}(M\times[0,T_0])$, we need the additional condition
\begin{align}\label{equn_compatibleequn4-2-1}
\phi_{tt}(p,0)=L(v)(p)\,\,\,\,\,\text{for}\,\,p\in\,\partial M,
\end{align}
with $u_0\in\,C^{4,\alpha}(M)$ and $\phi\in C^{4,\alpha}(\partial M\times[0,T])$ for all $T>0$, where
\begin{align*}
v=\frac{4(n-1)}{n-2}\Delta u_0-R_gu_0-n(n-1)u_0^{\frac{n+2}{n-2}}
\end{align*}
on $M$ and $L$ is a linear operator such that
\begin{align*}
L(\varphi)=\frac{4(n-1)}{n-2}\Delta \varphi-R_g\varphi-\frac{n(n-1)(n+2)}{n-2}u_0^{\frac{4}{n-2}}\varphi
\end{align*}
for any $\varphi\in C^2(M)$.

 For later use, we now present a well-known weak Harnack inequality for parabolic inequalities, see in \cite{LP} or Peter Li's online lecture notes on geometric analysis for instance.

 \begin{lem}
 Assume $\bar{B}_{r}(p)$ is a closed geodesic $r$-ball in a complete Riemannian manifold $(M, g)$ with $r\leq 1$. Let $u\in C^{2,1}(M\times[0,T_1])$ be a positive function such that
\begin{align*}
u_t \leq\frac{4(n-1)}{(n-2)}\Delta u+C_0u,
\end{align*}
for some constant $C_0>0$. Assume that there exists a constant $C_S>0$, such that we have the Sobolev inequality
\begin{align*}
\frac{1}{|B_r(p)|}\int_{B_r(p)}|\nabla \phi|^2 \geq C_S r^{-2}\big(\frac{1}{|B_r(p)|}\int_{B_r(p)}\phi^{\frac{2n}{n-2}}\big)^{\frac{n-2}{n}},
\end{align*}
for each $\phi \in C_0^1(B_r(p))$. Then for $\theta\in(0,1)$ and $T\in(T_0,T_1)$, there exists a constant $C_1>0$ depending on $C_0$, such that
\begin{align*}
\displaystyle\sup_{q\in B_{\theta r}(p), t\in[T,T_1]}|u(q)|
\leq C_1C_S^{-\frac{n-2}{4}}&\left((1-\theta)^{-2}r^{-2}+(T-T_0)^{-1}\right)^{\frac{n^2-4}{4n}}r^{\frac{n-2}{2}}(T_1-T_0)^{\frac{n-2}{2n}}\times\\
&\big(\frac{1}{|T_1-T_0|\times|B_r(p)|}\int_{T_0}^{T_1}\int_{B_r(p)}|u|^{\frac{2n}{n-2}}\big)^{\frac{n-2}{2n}}.
\end{align*}
 \end{lem}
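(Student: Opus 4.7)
The plan is to prove this bound by a standard parabolic Moser iteration on a sequence of nested cylinders, treating the linear absorption term $C_0 u$ as a perturbation that only inflates the final constant $C_1$. Concretely, I would introduce nested parabolic cylinders $Q_k = B_{r_k}(p)\times[T_k,T_1]$ with $r_k = \theta r + (1-\theta)r\cdot 2^{-k}$ and $T_k = T_0 + (T-T_0)(1-2^{-k})$, together with smooth space-time cutoffs $\eta_k$ satisfying $\eta_k\equiv 1$ on $Q_k$, $\mathrm{supp}\,\eta_k\subset Q_{k-1}$, $|\nabla\eta_k|\leq C\,2^k/((1-\theta)r)$, and $|\partial_t\eta_k|\leq C\,2^k/(T-T_0)$.

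Testing the inequality $u_t\leq \frac{4(n-1)}{n-2}\Delta u + C_0 u$ against $u^{2q-1}\eta_k^2$ for $q\geq 1$ and integrating by parts (absorbing $\int (\nabla u^q)\cdot (\nabla\eta_k)u^q\eta_k$ with Cauchy–Schwarz) produces the standard parabolic energy inequality
\[
\sup_{T_{k-1}\leq t\leq T_1}\int_{B_{r_{k-1}}}u^{2q}\eta_k^2\,dV_g \;+\; \int_{Q_{k-1}}|\nabla(u^q\eta_k)|^2\,dV_g\,dt \;\leq\; C\bigl(A_k+C_0\bigr)\int_{Q_{k-1}}u^{2q}\,dV_g\,dt,
\]
with $A_k=2^{2k}\bigl((1-\theta)^{-2}r^{-2}+(T-T_0)^{-1}\bigr)$. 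The Sobolev hypothesis of the lemma, applied slice-wise in $t$, then upgrades the gradient norm on the left into an $L^{2n/(n-2)}_x$ bound, which interpolates with the $L^\infty_tL^2_x$ bound to yield a parabolic Sobolev estimate
\[
\Bigl(\int_{Q_k}u^{2q\gamma}\,dV_g\,dt\Bigr)^{1/\gamma} \leq C\,C_S^{-1}\,|B_r(p)|^{-2/n}\bigl(A_k+C_0\bigr)\int_{Q_{k-1}}u^{2q}\,dV_g\,dt,\qquad \gamma=\tfrac{n+2}{n}.
\]

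I would then iterate with $q_k=\gamma^{k-1}q_0$, seeding at $2q_0=\tfrac{2n}{n-2}$ so that the initial integral on $Q_0=B_r(p)\times[T_0,T_1]$ is exactly the $L^{2n/(n-2)}$-norm appearing in the statement. Taking $\log$ and summing the convergent series $\sum\gamma^{-k}$ and $\sum k\gamma^{-k}$ produces a sup-bound on $Q_\infty=B_{\theta r}(p)\times[T,T_1]$ of exactly the claimed form, with the exponents $(n^2-4)/(4n)$, $(n-2)/2$, $(n-2)/(2n)$ and $-(n-2)/4$ emerging as the closed-form values of those series and of the accumulated powers of $C_S^{-1}$, $|B_r(p)|$, and $|T_1-T_0|$; the extra term $C_0$ is absorbed into $A_k$ using $r\leq 1$ and contributes only through $C_1=C_1(C_0)$.

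The main technical obstacle is the bookkeeping of constants: one must track the normalized (volume-averaged) Sobolev constant through every iteration step so that the factor $|B_r(p)|$ appears precisely inside the right-hand $L^{2n/(n-2)}$ average (not as an extra stray power), and simultaneously verify that the geometric series in $\theta$ and $T-T_0$ close up to yield the stated fractional exponents. The only place where the scalar-curvature or Yamabe structure of the flow enters is through the choice of $C_0$; beyond that, the argument is purely parabolic regularity theory.
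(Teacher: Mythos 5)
The paper offers no proof of this lemma to compare against: it is quoted as a well-known weak Harnack (mean-value) inequality and attributed to \cite{LP} and Peter Li's lecture notes, so any comparison is with the standard literature proof rather than with an in-paper argument. Your Moser-iteration sketch is exactly that standard proof, and the exponent bookkeeping you defer to is in fact consistent with the stated constants: with seed exponent $2q_0=\tfrac{2n}{n-2}$ and gain $\gamma=\tfrac{n+2}{n}$ one has $\sum_{k\ge 0}(2q_k)^{-1}=\tfrac{n-2}{2n}\cdot\tfrac{\gamma}{\gamma-1}=\tfrac{n^2-4}{4n}$, which is the stated power of $(1-\theta)^{-2}r^{-2}+(T-T_0)^{-1}$, while the Sobolev constant, the factor $r^{2}$, and the factor $|B_r(p)|^{-2/n}$ each enter once per step at the $L^{2q_k\gamma}$ level and accumulate to $\sum_{k\ge0}(2q_k\gamma)^{-1}=\tfrac{n-2}{4}$, giving $C_S^{-\frac{n-2}{4}}$, $r^{\frac{n-2}{2}}$ and $|B_r(p)|^{-\frac{n-2}{2n}}$; the last of these, together with the prefactor $(T_1-T_0)^{\frac{n-2}{2n}}$ cancelling the time average, reproduces the normalized right-hand side. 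Two points should be made explicit in a full write-up: your intermediate ``parabolic Sobolev'' display drops the $r^{2}$ that the normalized Sobolev hypothesis carries (it must be retained to produce $r^{\frac{n-2}{2}}$), and the Sobolev inequality is only assumed on the fixed ball $B_r(p)$, which suffices because every test function $u^{q_k}\eta_k$ is compactly supported in $B_{r_{k-1}}(p)\subset B_r(p)$. The absorption term is indeed harmless, since $r\le 1$ gives $C_0\le C_0(1-\theta)^{-2}r^{-2}$, so it only enlarges $C_1=C_1(C_0,n)$.
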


Remark. For a complete Riemannian manifold $(M^n, g)$, if $Ric_g\geq -k(n-1)g$ with some constant $k>0$, and assume that there exists a constant $C_v>0$ such that for a geodesic ball $B_r\subseteq M$, the volume ratio satisfies
\begin{align}\label{inequn_parab}
\frac{|B_r|}{|B_r^0|}\geq C_v,
\end{align}
where $|B_r^0|$ is the volume of a geodesic ball of radius $r$ in the space form of sectional curvature $-k$, then the Sobolev inequality holds on $B_r$ with $C_S$ depending on $k$ and $C_v$. For a compact manifold $(M,g)$ with boundary of class $C^{2,\alpha}$, such constants are uniform for each geodesic ball in the interior. Denote $M^{\circ}$ the interior of $M$, for each $p\in M^{\circ}$, we can also use classical parabolic theory in Euclidean domain to get the weak Harnack inequality. Indeed, let $r=\frac{1}{2}\min\{r_0,\,\text{dist}(p,\,\partial M)\}$ with $r_0$ the injectivity radius at $p$. In $B_{2r}(p)$, using the geodesic normal coordinates, we can write the equality as an parabolic inequality in Euclidean domain $B_r(0)$, and apply the weak Harnack theorem in \cite{Lieberman} to get the same control.

 We start with a parabolic analog to Aviles-McOwen's local upper bound estimates in \cite{AM1} on the solutions to $(\ref{equn_scalar1})$.
\begin{lem}\label{lem_intupperboundestimates}
 Assume $u>0$ is a solution to $(\ref{equn_scalar1})$ on $M\times [0,T_1)$ with $T_1>\epsilon_0$ for some $2>\epsilon_0>0$.  There exists a uniform constant $C_3>0$ depending on $\epsilon_0>0$ but independent of $T_1$ and $r$,  such that for each closed geodesic ball $\bar{B}_{2r}(p)\subseteq M^{\circ}$, we have
 \begin{align*}
 u(q,t)\leq C_3r^{-\frac{(n-2)}{2}},
 \end{align*}
 for $q\in \bar{B}_r(p)$ with $r\leq \min\{1, \sqrt{\epsilon_0}\}$, and $T_1> t\geq \frac{\epsilon_0}{2}$.
\end{lem}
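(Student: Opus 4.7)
The plan is a parabolic adaptation of the Aviles--McOwen elliptic local upper bound argument. Because $\bar B_{2r}(p)\subseteq M^\circ$ and $M$ is smooth compact, I have uniform geometric control on scale $r\leq 1$, and $\sup_M|R_g|$ is a fixed finite constant. The goal is to build a supersolution $\bar u(x,t)$ of equation $(\ref{equn_scalar1})$ on $B_{2r}(p)\times(0,T_1)$ that blows up as $x\to\partial B_{2r}(p)$ uniformly in $t$ and as $t\to 0^+$ uniformly in $x$. Parabolic comparison then gives $u\leq\bar u$ on $B_{2r}(p)\times[0,T_1)$, and evaluation at $(q,t)\in\bar B_r(p)\times[\epsilon_0/2,T_1)$ yields the claimed bound.

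The key algebraic fact that makes an additive barrier viable is $(a+b)^{q}\geq a^q+b^q$ for $a,b\geq 0$ and $q=(n+2)/(n-2)>1$; this implies that the sum of any two supersolutions of $(\ref{equn_scalar1})$ is again a supersolution (the linear parts add, and the absorption term only strengthens the inequality under addition by the above inequality). Working in geodesic normal coordinates on $B_{2r}(p)$, I would take
\begin{align*}
\bar u(x,t)=A(2r)^{(n-2)/2}\bigl(4r^2-|x|^2\bigr)^{-(n-2)/2}+B\bigl(1-e^{-\gamma t}\bigr)^{-(n-2)/4}.
\end{align*}
The first term is the usual Aviles--McOwen Euclidean barrier for the Loewner--Nirenberg equation on $\{|x|<2r\}$, and a direct computation of the Euclidean Laplacian shows it is an elliptic supersolution of $(\ref{equn_scalar1})$ once $A^{4/(n-2)}\geq 4+4\sup_M|R_g|/(n(n-1))$; the $O(r^2)$ corrections from passing to the Riemannian Laplacian in normal coordinates are absorbed by slightly enlarging $A$. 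The second term is modeled on the ODE solution $Bt^{-(n-2)/4}$ to $v_t=-n(n-1)v^{(n+2)/(n-2)}$; the modification $(1-e^{-\gamma t})^{-(n-2)/4}$ keeps it bounded on $[\epsilon_0/2,\infty)$, and a direct calculation shows it is a parabolic supersolution of $(\ref{equn_scalar1})$ for $B,\gamma$ large depending only on $n$ and $\sup_M|R_g|$.

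Parabolic comparison on slightly shrunken cylinders, followed by a limit, gives $u\leq\bar u$ on $B_{2r}(p)\times[0,T_1)$, since $\bar u=+\infty$ on the parabolic boundary whereas $u$ is continuous and finite there. At $(q,t)$ with $|q|\leq r$ and $t\geq\epsilon_0/2$ one has $4r^2-|q|^2\geq 3r^2$, so the spatial piece is bounded by $A(2/\sqrt 3)^{n-2}r^{-(n-2)/2}$, and the temporal piece by $B(1-e^{-\gamma\epsilon_0/2})^{-(n-2)/4}=:C(\epsilon_0)$. The hypothesis $r\leq\sqrt{\epsilon_0}$ gives $C(\epsilon_0)\leq C(\epsilon_0)\epsilon_0^{(n-2)/4}r^{-(n-2)/2}$, absorbing the bounded constant into the $r^{-(n-2)/2}$ scaling. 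This yields $u(q,t)\leq C_3r^{-(n-2)/2}$ with $C_3$ depending on $n$, $\sup_M|R_g|$, and $\epsilon_0$ only, independent of $r$ and $T_1$.

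The main difficulty I expect is ensuring the supersolution property holds for the spatial piece on the full Riemannian ball $B_{2r}(p)$ rather than just on its Euclidean model: the $O(r^2)$ metric corrections must be controlled uniformly across the ball, and in particular they must not overwhelm the leading-order singular balance near $\partial B_{2r}(p)$. This is handled by the standard fact that in normal coordinates $g_{ij}(x)=\delta_{ij}+O(|x|^2)$ with explicit bounds from sectional curvature on the compact $M$, so the Riemannian Laplacian differs from the Euclidean one by a factor $1+O(r^2)$ in its leading singular term, which for $r$ small is absorbed by the $A$ constant; any remaining range $r_0<r\leq 1$ is covered by a separate standard interior parabolic estimate, enlarging $C_3$ accordingly.
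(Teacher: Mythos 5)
Your argument is correct, but it proves the lemma by a genuinely different route than the paper. The paper's proof is integral: it multiplies the equation by $u\varphi^{n}$ with a space-time cutoff, integrates by parts, and uses H\"older's inequality to let the absorption term $-n(n-1)u^{\frac{2n}{n-2}}\varphi^{n}$ produce a space-time $L^{\frac{2n}{n-2}}$ bound on $u$ that is independent of $u$, $p$, $T$ and $r$; the pointwise bound then follows from the weak Harnack (local boundedness) inequality applied to the linear differential inequality $u_t\le\frac{4(n-1)}{n-2}\Delta u-R_gu$. You instead build an explicit supersolution and invoke the comparison principle. The structural observation that makes your additive barrier work --- that the sum of two nonnegative supersolutions is again a supersolution because $(a+b)^{\frac{n+2}{n-2}}\ge a^{\frac{n+2}{n-2}}+b^{\frac{n+2}{n-2}}$ --- is exactly right, and your temporal piece $B(1-e^{-\gamma t})^{-\frac{n-2}{4}}$ correctly handles the fact that, unlike in the monotone settings elsewhere in the paper, $u(\cdot,t)$ need not be an elliptic subsolution, so the initial slice must be screened off by a barrier blowing up as $t\to0^{+}$ while remaining a supersolution for all later times (your condition $n(n-1)B^{\frac{4}{n-2}}\ge\frac{(n-2)\gamma}{4}+\sup_M|R_g|$ does this). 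Your approach is more elementary and yields an explicit constant; the paper's integral approach avoids any supersolution construction and is closer in spirit to \cite{AM1}. Indeed the paper itself uses a barrier of your spatial type (Lemma 5.2 of \cite{GSW}) for the analogous upper bound in the proof of Theorem \ref{thm_YamabeflowC4settingconvt1}, so the elliptic part of your construction is consistent with what the author accepts as standard.

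Two points deserve tightening. First, the verification that the spatial piece remains a supersolution for the Riemannian Laplacian requires $2r$ to be below the injectivity radius at $p$ and the relative $O(r^2)$ (second-order) and $O(r^2)$ (first-order, after comparing $r|\nabla u|$ against $u^{\frac{n+2}{n-2}}$) corrections to be absorbed into $A$; this forces a smallness threshold $r\le r_0(M,g)$. Second, for the remaining range $r_0<r\le\min\{1,\sqrt{\epsilon_0}\}$ the phrase ``a separate standard interior parabolic estimate'' is not by itself a proof: the correct and standard fix is a covering argument, applying the small-radius case centered at each $q\in\bar B_r(p)$ with radius $\min\{r,r_0\}/2$ (noting $\bar B_{\min\{r,r_0\}}(q)\subseteq\bar B_{2r}(p)\subseteq M^{\circ}$), which gives $u(q,t)\le C_3(r_0/2)^{-\frac{n-2}{2}}\le C_3'r^{-\frac{n-2}{2}}$ since $r\le1$. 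With these routine repairs the proof is complete.
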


\begin{proof}
Pick up a function $\varphi(q,t)=\xi(q)\eta(t)\in C^2( M\times [0,T_1))$ to be determined later. Let $\xi\geq 0$ be a cut off function with compact support $B_{2r}(p)$, so that $\xi(q)=1$ for $q\in B_r(p)$, $0\leq \xi(q)\leq 1$ for $q\in B_{2r}(p)$ and there exists a constant $C>0$ independent of $p\in M^{\circ}$ and $r$ such that $|\nabla \xi|\leq Cr^{-1}$ in $B_{2r}(p)$. Multiply $u\varphi^{\alpha}$ on both sides of $(\ref{equn_scalar1})$ and do integration on $M$, we have
 \begin{align*}
 \int_M u u_t\varphi^{\alpha}dV_g&=\int_M[\frac{4(n-1)}{n-2}u\Delta u \varphi^{\alpha}-R_gu^2\varphi^{\alpha}-n(n-1)u^{\frac{2n}{n-2}}\varphi^{\alpha}]dV_g
 \end{align*}
 Integration by parts, we obtain
 \begin{align*}
 &\frac{1}{2}\frac{d}{dt}\big(\int_M u^2\varphi^{\alpha}dV_g\big)-\frac{\alpha}{2}\int_M u^2\varphi^{\alpha-1}\varphi_t dV_g\\
 =&\int_M[-\frac{4(n-1)}{n-2}|\nabla u|^2 \varphi^{\alpha}-\frac{4(n-1)}{n-2}\alpha u\varphi^{\alpha-1}\,\nabla u\cdot\nabla \varphi-R_gu^2\varphi^{\alpha}-n(n-1)u^{\frac{2n}{n-2}}\varphi^{\alpha} ]dV_g\\
 \leq&\int_M[u^2\big(\frac{(n-1)}{n-2}\alpha^2\varphi^{\alpha-2}\,|\nabla \varphi|^2-R_g\varphi^{\alpha}\big)-n(n-1)u^{\frac{2n}{n-2}}\varphi^{\alpha} ]dV_g,
 \end{align*}
 where the last inequality is by Cauchy inequality. Therefore,
 \begin{align*}
\frac{1}{2}\frac{d}{dt}\big(\int_M u^2\varphi^{\alpha}dV_g\big)
 \leq\int_M[\big(\frac{\alpha}{2}\varphi^{\alpha-1}\varphi_t+\frac{(n-1)}{n-2}\alpha^2\varphi^{\alpha-2}\,|\nabla \varphi|^2-R_g\varphi^{\alpha}\big)\,u^2\,-\,n(n-1)u^{\frac{2n}{n-2}}\varphi^{\alpha} ]dV_g.
 \end{align*}
 Now for any $T$ such that $T_1>T\,\geq \frac{\epsilon_0}{2}$, we assume $1\geq \eta\geq 0$ with $\eta(t)=0$ for $t\leq T-\frac{r^2}{4}$, $\eta=1$ for $t\geq T-\frac{r^2}{8}$ and $|\eta'(t)|\leq \frac{12}{r^2}$ for $t>0$.   Integrate the above inequality on $t\in[T-\frac{r^2}{4},T]$ to have
  \begin{align*}
0\leq&\frac{1}{2}\big(\int_M u^2\varphi^{\alpha}dV_g\big)\big|_{t=T}\\
 \leq&\int_{T-\frac{r^2}{4}}^{T}\int_M[\big(\frac{\alpha}{2}\varphi^{\alpha-1}\varphi_t+\frac{(n-1)}{n-2}\alpha^2\varphi^{\alpha-2}\,|\nabla \varphi|^2-R_g\varphi^{\alpha}\big)\,u^2\,-\,n(n-1)u^{\frac{2n}{n-2}}\varphi^{\alpha} ]dV_gdt.
 \end{align*}
 Therefore, by H$\ddot{\text{o}}$lder inequality,
  \begin{align*}
 & n(n-1)\int_{T-\frac{r^2}{4}}^T \int_Mu^{\frac{2n}{n-2}}\varphi^{\alpha} dV_gdt\\
 \leq&\int_{T-\frac{r^2}{4}}^{T}\int_M[\big(\frac{\alpha}{2}\varphi^{\alpha-1}\varphi_t+\frac{(n-1)}{n-2}\alpha^2\varphi^{\alpha-2}\,|\nabla \varphi|^2-R_g\varphi^{\alpha}\big)\,u^2]dV_gdt\\
 \leq&[\int_{T-\frac{r^2}{4}}^T\int_M u^{\frac{2n}{n-2}}\varphi^{\alpha} dV_g dt]^{\frac{n-2}{n}}[\int_{T-\frac{r^2}{4}}^T\int_M[\big(\frac{\alpha}{2}\varphi\varphi_t+\frac{(n-1)}{n-2}\alpha^2\,|\nabla \varphi|^2-R_g\varphi^2\big)\varphi^{\alpha-2-\frac{n-2}{n}\alpha}]^{\frac{n}{2}}dV_gdt]^{\frac{2}{n}}\\
 =&[\int_{T-\frac{r^2}{4}}^T\int_M u^{\frac{2n}{n-2}}\varphi^{\alpha} dV_g dt]^{\frac{n-2}{n}}[\int_{T-\frac{r^2}{4}}^T\int_M[\big(\frac{\alpha}{2}\varphi\varphi_t+\frac{(n-1)}{n-2}\alpha^2\,|\nabla \varphi|^2-R_g\varphi^2\big)\varphi^{-2+\frac{2}{n}\alpha}]^{\frac{n}{2}}dV_gdt]^{\frac{2}{n}}.
 \end{align*}
 Taking $\alpha=n$, we have
 \begin{align*}
\big(\int_{T-\frac{r^2}{8}}^T \int_{B_r(p)}u^{\frac{2n}{n-2}}dV_gdt\big)^{\frac{2}{n}}&\leq (\int_{T-\frac{r^2}{4}}^T \int_Mu^{\frac{2n}{n-2}}\varphi^n dV_gdt)^{\frac{2}{n}}\\
&\leq\frac{1}{n(n-1)}[\int_{T-\frac{r^2}{4}}^T\int_M\big(\frac{n}{2}\varphi\varphi_t+\frac{(n-1)}{n-2}n^2\,|\nabla \varphi|^2-R_g\varphi^2\big)^{\frac{n}{2}}dV_gdt]^{\frac{2}{n}},
 \end{align*}
where the right hand side is bounded from above independent of $u$, $p$, $T$ and $r$. Recall that
 \begin{align*}
 u_t\leq \frac{4(n-1)}{(n-2)}\Delta_gu-R_gu
 \end{align*}
on $M\times[0,T_1)$. Hence on $B_r(p)\times [T-\frac{r^2}{4},T]$, combining with the weak Harnack type inequality on $B_{2r}(q)\times[T-\frac{r^2}{4},T]$ with $\theta=\frac{1}{2}$ and by the choice of $\varphi$, we have
\begin{align*}
\displaystyle\sup_{q\in B_r(p),T-\frac{r^2}{8}\leq t\leq T}u(q,t)\leq C_3r^{-\frac{n-2}{2}},
\end{align*}
for $T_1>T\geq \frac{\epsilon_0}{2}$, with a constant $C_3=C_3(\epsilon_0)>0$ independent of $p\in M^{\circ}$, $u$, $T_1$ and $r$. In particular, for any compact subset in $M^{\circ}$ and $t\geq\frac{\epsilon_0}{2}$, $u$ has a uniform upper bound independent of $t$.

\end{proof}

 For the Cauchy--Dirichlet problem $(\ref{equn_scalar1})-(\ref{equn_bdv1})$ with $\phi\to\infty$ uniformly on $\partial M$, we now estimate the asymptotic behavior of the solution $u$ near the boundary as $t\to \infty$.
\begin{lem}\label{lem_asymlowbdnearbd1}
Assume that the positive function $u\in C^{2,1}(M\times[0,\infty))$ is a solution to the Cauchy--Dirichlet problem $(\ref{equn_scalar1})-(\ref{equn_bdv1})$ with $\liminf_{t\to\infty}\inf_{\partial M}\phi=\infty$. Moreover, assume $\phi$ satisfies $(\ref{ineqn_bdvcond1})$. Then there exist constants $C>0$, $t_1>0$ large and $x_1>0$ small such that
\begin{align*}
u\geq \frac{1}{C}\big(\,x+\,(\inf_{p\in\partial M}\phi(p,t)\,)^{\frac{2}{2-n}}\,\big)^{\frac{2-n}{2}}-C,
\end{align*}
for $x\leq x_1$ and $t\geq t_1$, where $x=x(p)$ is the distance function to the boundary.
\end{lem}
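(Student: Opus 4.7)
The approach is a parabolic comparison argument: construct an explicit subsolution $\tilde w$ that encodes the expected boundary blow-up rate $x^{(2-n)/2}$ together with a time-dependent shift that vanishes as $t\to\infty$, and then deduce $u\ge\tilde w$ from the parabolic maximum principle. Set $\alpha:=(2-n)/2<0$ and $\psi(t):=(\inf_{p\in\partial M}\phi(p,t))^{2/(2-n)}$; since $\inf_{\partial M}\phi\to\infty$, $\psi$ is Lipschitz with $\psi\to 0$, and by the envelope theorem together with assumption $(\ref{ineqn_bdvcond1})$ (specifically $\phi^{n/(2-n)}\phi_t\to 0$ uniformly), $\psi'(t)$ tends to zero. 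In a collar $\{x\le x_0\}$ of $\partial M$, the distance function $x$ is smooth with $|\nabla_g x|=1$ and $|\Delta_g x|$ uniformly bounded. Define the barrier
\[
\tilde w(p,t):=c\bigl(x(p)+\psi(t)\bigr)^{\alpha}-C_0,
\]
with constants $c\in(0,1)$ small and $C_0>0$ large to be fixed.

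For the subsolution property, compute $L\tilde w:=\tilde w_t-\tfrac{4(n-1)}{n-2}\Delta\tilde w+R_g\tilde w+n(n-1)\tilde w_+^{(n+2)/(n-2)}$. The critical-exponent identity $\tfrac{4(n-1)}{n-2}\alpha(\alpha-1)=n(n-1)$ makes the singular $(x+\psi)^{-(n+2)/2}$ contributions from $\Delta\tilde w$ and the power term (using $\tilde w_+\le c(x+\psi)^{\alpha}$) assemble into $-n(n-1)\bigl(c-c^{(n+2)/(n-2)}\bigr)(x+\psi)^{-(n+2)/2}$, which is negative since $c\in(0,1)$. The remaining terms — involving $\psi'$, $\Delta x$, $R_g\tilde w$ and $R_gC_0$ — are of order $(x+\psi)^{-n/2}$ and $O(1)$. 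Multiplying through by $(x+\psi)^{(n+2)/2}$, the leading term is a fixed positive constant while the remainder is $O(x+\psi)$; hence, choosing $x_1>0$ small and $t_1>0$ large enough that $\psi(t)\le x_1$ and $|\psi'(t)|$ is small for all $t\ge t_1$, we obtain $L\tilde w\le 0$ on $\Omega:=\{x\le x_1\}\times[t_1,\infty)$.

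Next we verify $\tilde w\le u$ on the parabolic boundary of $\Omega$. On $\partial M$ one has $\tilde w=c\inf_{\partial M}\phi-C_0\le c\phi\le\phi=u$ for $c\le 1$ and $C_0\ge 0$. On $\{x=x_1\}$, enlarging $C_0\ge c x_1^{\alpha}$ gives $\tilde w\le 0<u$. At $t=t_1$, the continuous positive function $u(\cdot,t_1)$ has $\min_{\{x\le x_1\}}u(\cdot,t_1)=m_1>0$, and by further enlarging $C_0$ (depending on $t_1$) we ensure $\tilde w(\cdot,t_1)\le u(\cdot,t_1)$. The parabolic comparison principle applied to $V:=u-\tilde w$, which satisfies $V_t-\tfrac{4(n-1)}{n-2}\Delta V+hV\ge 0$ with $h=R_g+\tfrac{n(n-1)(n+2)}{n-2}\xi^{4/(n-2)}$ for some $\xi$ between $u$ and $\tilde w$, then yields $V\ge 0$ on $\Omega$, producing the claimed inequality with $C=\max(1/c,C_0)$.

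\textbf{Main obstacle.} The coefficient $h$ is unbounded above near $\partial M$ since $\tilde w$ blows up there, so the maximum principle needs care. However $h\ge\inf_M R_g$ is bounded below, which suffices: an exponential shift $V^*:=e^{-at}V$ with large $a$ reduces to the case $h+a\ge 0$, and any interior negative minimum of $V^*$ on a compact sub-slab must lie in a region where $\tilde w$ is bounded (hence $h$ locally finite), yielding the usual contradiction from $V^*_t\le 0$ and $\Delta V^*\ge 0$. A minor technicality is the only Lipschitz regularity of $\psi(t)$; this is absorbed either by interpreting the subsolution inequality in the viscosity / a.e.\ sense, or by working with a smooth minorant $\psi_{\epsilon}\le\psi$ (available because $(\ref{ineqn_bdvcond1})$ gives the required uniform decay of $\phi^{n/(2-n)}\phi_t$) and then passing to the limit.
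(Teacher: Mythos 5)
Your overall strategy is the same as the paper's: build an explicit barrier of the form $c(x+\psi)^{\frac{2-n}{2}}-(\text{const})$ on a collar, verify it is a subsolution using the critical-exponent identity and the decay hypotheses in $(\ref{ineqn_bdvcond1})$, and compare. There are two genuine differences worth noting. First, your barrier depends on $t$ only through $\inf_{\partial M}\phi$, so you never need the tangential conditions in $(\ref{ineqn_bdvcond1})$ on $\nabla_g\phi$ and $\nabla_g^2\phi$; the paper's barrier carries $\phi(q,t)$ (and a conformal factor $f(q)$) pointwise, which is why those conditions appear there. The price you pay is the Lipschitz-only regularity of $\inf_{\partial M}\phi$, which you correctly flag; note that any smooth minorant $\psi_\epsilon\le\psi$ makes the barrier \emph{larger} on $\partial M$, so you must also keep $\psi_\epsilon\ge c^{2/(n-2)}\psi$ for the boundary comparison to survive. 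Second, the paper conformally changes to a metric with $R_h>0$ near $\partial M$ so that the zeroth-order coefficient in the equation for $u-\varphi$ is positive, whereas you keep $g$ and argue with a coefficient that is merely bounded below; your observation that on each compact time slab both $u$ and $\tilde w$ are bounded (so the coefficient is locally bounded and the standard exponential-shift maximum principle applies) is correct, and your ``main obstacle'' is in fact not an obstacle.

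There is, however, a real gap in the bookkeeping of constants. Your subsolution verification treats $R_g C_0$ as an ``$O(1)$'' remainder dominated, after multiplying by $(x+\psi)^{\frac{n+2}{2}}$, by the fixed negative leading constant. But you then ``further enlarge $C_0$ depending on $t_1$'' to force $\tilde w(\cdot,t_1)\le u(\cdot,t_1)$, which requires roughly $C_0\ge c\inf_{\partial M}\phi(\cdot,t_1)$. When $R_g<0$ somewhere, the term $-R_gC_0$ in $L\tilde w$ is positive of size $\sup(-R_g)\,C_0$, and the subsolution inequality forces $\sup(-R_g)\,C_0\lesssim x_1^{-\frac{n+2}{2}}$; since $t_1$ must already be large enough that $\psi(t)\le x_1$ and $|\psi'|$ is small for $t\ge t_1$, the quantity $\inf_{\partial M}\phi(\cdot,t_1)$ can be arbitrarily much larger than $x_1^{-\frac{n+2}{2}}$, and the two requirements on $C_0$ become incompatible. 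Shrinking $c$ does not rescue this version, because that particular $C_0$ scales with $c$ as well. The repair is the one the paper uses: fix $C_0=c\,x_1^{\frac{2-n}{2}}$ once and for all (so that $\tilde w\le 0$ on $\{x=x_1\}$ and $R_gC_0$ is genuinely of relative order $(x+\psi)^2$), and obtain the comparison on the initial slice $t=t_1$ by choosing $c$ small, namely $c\le \min_{\{x\le x_1\}}u(\cdot,t_1)\,/\inf_{\partial M}\phi(\cdot,t_1)$. Every term in $L\tilde w$ then scales at worst linearly in $c$ while the favorable leading term behaves like $c$ for small $c$, so the subsolution inequality is preserved under this final shrinking. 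With that reordering your proof goes through.
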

\begin{proof}
 Recall that there exists a conformal metric $h$ with $g=f^{\frac{4}{n-2}}h$ so that $R_h>0$ in a neighborhood $V$ of the boundary $\partial M$, where $f\in C^{2,\alpha}(M)$ is a positive function, see in Lemma 3.1 in \cite{ACF}. Indeed, for that one can use the formula of the scalar curvature under conformal change and do Taylor expansion of $f$ in the direction of $x$ in a small neighborhood of the boundary and just take $f$ to be a quadratic polynomial of $x$ with coefficients functions on $\partial M$. Let $x$ be the distance function to the boundary $\partial M$ under $h$. Take $x_1>0$ small so that the boundary neighborhood $U=\{0\leq x\leq x_1\}$ lies in $V$ and hence $R_h>0$, and $U$ is diffeomorphic to $\partial M\times [0,x_1]$ under the exponential map $F:\,\partial M\times [0,x_1]\to U$, where $F(q, x)=\text{Exp}_q^h(x)\in U$ is on the geodesic starting from $q\in \partial M$ in $(V,h)$ in the inner normal direction of $\partial M$ of distance $x$ to $q$. We define a function $\varphi\in C_{loc}^{2,\alpha}(U\times [0,+\infty))$ such that
 \begin{align}\label{equn_barrierfunctionlowerbound1}
 \varphi(\text{Exp}^h_q(x),t)=c[(x+(f(q)\phi(q,t))^{\frac{2}{2-n}})^{\frac{2-n}{2}}-(x_1+(f(q)\phi(q,t))^{\frac{2}{2-n}})^{\frac{2-n}{2}}]
  \end{align}
  for $(q,x,t)\in \partial M\times[0,x_1]\times[0,+\infty)$, where $c>0$ is a small constant to be determined. Let $\tilde{u}=fu$. Then we have
 \begin{align*}
 &\tilde{u}_t=f^{-\frac{4}{n-2}}[\frac{4(n-1)}{n-2}\Delta_h \tilde{u}-R_h\tilde{u}-n(n-1)\tilde{u}^{\frac{n+2}{n-2}}],\,\,\,\text{in}\,\,M\times [0,+\infty),\\
&\tilde{u}(p,0)=f(p)u_0(p),\,\,p\in M,\\
&\tilde{u}(q,t)=f(q)\phi(q,t),\,\,q\in\,\partial M.
 \end{align*}

 Note that $h$ has the orthogonal decomposition $h=dx^2+h_x$ on $U$ with $h_0=h\big|_{\partial M}$. Then by direct calculation, one has
\begin{align}\label{equn_lowerboundnearboundary}
\begin{split}
\Delta_h\varphi&=\partial _x^2\varphi+\frac{1}{2}H_{x}\partial_x\varphi+\Delta_{h_x}\varphi,\\
\varphi_t&=cf^{\frac{2}{n-2}}[(x+(f(q)\phi(q,t))^{\frac{2}{2-n}})^{\frac{-n}{2}}-(x_1+(f(q)\phi(q,t))^{\frac{2}{2-n}})^{\frac{-n}{2}}]\phi^{\frac{n}{2-n}}\frac{d\phi}{dt}\\
\partial_x\varphi&=\frac{2-n}{2}c(x+(f(q)\phi(q,t))^{\frac{2}{2-n}})^{\frac{-n}{2}},\,\,\,\,\partial_x^2\varphi=\frac{n(n-2)}{4}c(x+(f(q)\phi(q,t))^{\frac{2}{2-n}})^{\frac{-n-2}{2}},\\
|\nabla_{h_x}\varphi|&\leq c(x+(f(q)\phi(q,t))^{\frac{2}{2-n}})^{\frac{-n}{2}}(f\phi)^{\frac{n}{2-n}}|\nabla_{h_x}(f\phi)|,\\
|\nabla_{h_x}^2\varphi|&\leq c(x+(f(q)\phi(q,t))^{\frac{2}{2-n}})^{\frac{-n}{2}}\,\,\big[(f\phi)^{\frac{n}{2-n}}|\nabla_{h_x}^2(f\phi)|+\frac{n}{n-2}(f\phi)^{\frac{2n-2}{2-n}}|\nabla_{h_x}(f\phi)|^2\\
&\,\,\,\,+\frac{n}{n-2}(x+(f(q)\phi(q,t))^{\frac{2}{2-n}})^{-1}(f\phi)^{\frac{2n}{2-n}}|\nabla_{h_x}(f\phi)|^2\big],
\end{split}
\end{align}
where $H_{s}$ is the mean curvature of the hypersurface $\Sigma_s=\{x=s\}$. Recall that $\phi\to\infty$ uniformly on $\partial M$ as $t\to\infty$. By the assumption $(\ref{ineqn_bdvcond1})$, we have that there exists $t_1\geq 0$ such that for $t\geq t_1$,
\begin{align*}
\varphi_t\leq f^{-\frac{4}{n-2}}[\frac{4(n-1)}{(n-2)}\Delta_h\varphi-R_h\varphi-n(n-1)\varphi^{\frac{n+2}{n-2}}]
\end{align*}
on $U$, since all other terms are lower order terms as $t\to\infty$ in comparison with the terms
\begin{align}\label{equn_maintermtestbdr}
\begin{split}
&\frac{4(n-1)}{(n-2)}\Delta_h\varphi-n(n-1)\varphi^{\frac{n+2}{n-2}}=n(n-1)c(x+(f(q)\phi(q,t))^{\frac{2}{2-n}})^{\frac{-n-2}{2}}(1+o(1))\\
&-n(n-1)c^{\frac{n+2}{n-2}}[(x+(f(q)\phi(q,t))^{\frac{2}{2-n}})^{\frac{2-n}{2}}-(x_1+(f(q)\phi(q,t))^{\frac{2}{2-n}})^{\frac{2-n}{2}}]^{\frac{n+2}{n-2}},
 \end{split}
 \end{align}
 with $c>0$ small and the term $o(1)\to 0$ uniformly on $U$ as $t\to +\infty$. Let $v=\tilde{u}-\varphi$. Therefore,
\begin{align*}
v_t\geq f^{\frac{-4}{n-2}}[\frac{4(n-1)}{(n-2)}\Delta_h v-\,\big(R_h+n(n-1)\xi(p,t)\,\big)\,v\,]
\end{align*}
on $U\times[t_1,+\infty)$, where $\xi(p,t)=\frac{u^{\frac{n+2}{n-2}}-\varphi^{\frac{n+2}{n-2}}}{u-\varphi}$ when $u\neq \varphi$, and $\xi=\frac{n+2}{n-2}u^{\frac{4}{n-2}}$ otherwise, and hence, $\xi(p,t)>0$ on $U\times [t_1,+\infty)$.  Since $\tilde{u}$ is continuous on $U\times \{t=t_1\}$, we choose $c>0$ small so that $\tilde{u}>\varphi$ on $U\times\{t=t_1\}$. On the other hand, $\varphi=0<\tilde{u}$ on $\{x=x_1\}\times[t_1,\infty)$ and by definition $\varphi\leq \tilde{u}$ on $\partial M$ and hence, $v\geq0$ on $\partial U\times [t_1,+\infty)\bigcup U\times\{t_1\}$. Recall that $R_h>0$ on $U$, and hence by maximum principle,
\begin{align*}
v\geq0
\end{align*}
on $U\times [t_1,+\infty)$. Therefore,
\begin{align*}
u\geq f^{-1}\varphi
\end{align*}
on $U\times [t_1,+\infty)$. Combining this lower bound estimate of $u$ near the boundary with the Harnack inequality $(\ref{inequn_Harnack1})$ based on the uniform interior upper bound estimates in Lemma \ref{lem_intupperboundestimates}, and also using finite cover of geodesic balls, on any compact sub-domain of $M^{\circ}$ we obtain uniform positive lower bound of $u$ for $t\in[t_1,+\infty)$.

\end{proof}

Now we give the long time existence of the flow.
\begin{lem}\label{lem_longtimeexistence}
There exists a unique solution $u \in C_{loc}^{2+\alpha,1+\frac{\alpha}{2}}(M\times[0,\infty))$ to the Cauchy--Dirichlet boundary $(\ref{equn_scalar1})-(\ref{equn_bdv1})$ with positive functions $u_0\in C^{2,\alpha}(M)$ and $\phi\in C_{loc}^{2+\alpha,1+\frac{\alpha}{2}}(\partial M\times[0,\infty))$ satisfying $(\ref{equn_compatibleequn1-2})$. Moreover, if in addition, $u_0\in C^{4,\alpha}(M)$ and $\phi\in C_{loc}^{4+\alpha,2+\frac{\alpha}{2}}(\partial M\times[0,\infty))$ satisfy $(\ref{equn_compatibleequn4-2-1})$, then $u\in C_{loc}^{4+\alpha,2+\frac{\alpha}{2}}(M\times[0,\infty))$. Moreover, if $\liminf_{t\to\infty}\inf_{\partial M}\phi=\infty$ and $\phi$ satisfies $(\ref{ineqn_bdvcond1})$ as $t\to\infty$, then there exists constants $C>0$, $t_2>0$ large and $x_1>0$ small such that
\begin{align}\label{inequn_bdlbest1-1}
u\geq \frac{1}{C}\big(\,x+\,(\inf_{p\in\partial M}\phi(p,t)\,)^{\frac{2}{2-n}}\,\big)^{\frac{2-n}{2}}-C,
\end{align}
for $x\leq x_1$ and $t\geq t_2$, where $x=x(p)$ is the distance function to the boundary, and for each compact subset $F\subseteq M^{\circ}$, there exists $C=C(F)>0$ such that
\begin{align*}
u(q,t)\geq C
\end{align*}
for each $(q,t)\in F\times[0,\infty)$. Moreover, for each compact subset $F\subseteq M^{\circ}$, there exists $m=m(F)>0$ independent of $u_0$ and $\phi$ such that
\begin{align*}
\liminf_{t\to\infty}\inf_{q\in F}u(q,t)\geq m,
\end{align*}
provided that $\liminf_{t\to\infty}\inf_{\partial M}\phi=\infty$.
\end{lem}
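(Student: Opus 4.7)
The plan is to handle four items in turn: (i) short-time existence with the stated Schauder regularity, (ii) global extension via maximum-principle barriers, (iii) the near-boundary asymptotic lower bound (already done elsewhere), and (iv) the interior positive lower bound via a Harnack chain, including the data-independent version.

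For the first item, equation $(\ref{equn_scalar1})$ is uniformly parabolic with smooth coefficients on $M$ and a locally Lipschitz reaction term. Given $u_0\in C^{2,\alpha}(M)$ and $\phi\in C^{2+\alpha,1+\frac{\alpha}{2}}_{loc}(\partial M\times[0,\infty))$ satisfying the first-order compatibility $(\ref{equn_compatibleequn1-2})$, the standard parabolic Schauder theory (Ladyzhenskaya-Solonnikov-Ural'tseva, Chapter V, or Lieberman) yields a unique $u\in C^{2+\alpha,1+\frac{\alpha}{2}}(M\times[0,T_0])$ for some $T_0>0$. Under the additional $C^{4,\alpha}$ hypotheses and the second-order compatibility $(\ref{equn_compatibleequn4-2-1})$, differentiating the equation in $t$ gives a linear equation for $w=u_t$ whose initial-boundary data are now compatible to first order; a second Schauder application yields $w\in C^{2+\alpha,1+\frac{\alpha}{2}}$, and combining with elliptic Schauder estimates in the spatial variable (using the equation to express $\Delta u$ in terms of $u_t$ and lower-order terms) upgrades $u$ to $C^{4+\alpha,2+\frac{\alpha}{2}}$.

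To extend the solution to $[0,\infty)$ I would prove a priori $L^\infty$ bounds on each finite slab $[0,T]$. An upper bound follows from the ODE supersolution $\bar v(t)$ solving $\bar v'=|R_g|_{L^\infty}\bar v$ with $\bar v(0)=M_T:=\max\{\sup_M u_0,\ \sup_{\partial M\times[0,T]}\phi\}$: since $-n(n-1)u^{\frac{n+2}{n-2}}\le 0$, $\bar v$ is a supersolution of $(\ref{equn_scalar1})$ and dominates $u$ on the parabolic boundary, so $u\le \bar v$ by the maximum principle. For a positive lower bound on $[0,T]$, $\underline v(t)=\delta e^{-Ct}$ is a subsolution provided $\delta\le \min\{\inf_M u_0,\ \inf_{\partial M\times[0,T]}\phi\}$ and $C$ is large (depending on $|R_g|_{L^\infty}$ and $\|\bar v\|_{L^\infty}^{4/(n-2)}$); hence $u\ge \underline v>0$ on $[0,T]$. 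These uniform bounds together with interior parabolic Schauder estimates preclude blow-up, and standard continuation gives the global solution.

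For $(\ref{inequn_bdlbest1-1})$, under $\liminf_{t\to\infty}\inf_{\partial M}\phi=\infty$ and $(\ref{ineqn_bdvcond1})$, this is exactly Lemma~\ref{lem_asymlowbdnearbd1} applied to $u$. For the uniform positive lower bound on a compact $F\subset M^\circ$, I would split in $t$: on $[0,t_2]$ the subsolution $\underline v$ constructed above is a positive lower bound on all of $M$; on $[t_2,\infty)$ the near-boundary bound yields a quantitative positive lower bound on a thin slab $\{x\le x_1\}$ (for $t$ large enough that the first term in $(\ref{inequn_bdlbest1-1})$ dominates the offset $-C$), and combining this with the interior upper bound of Lemma~\ref{lem_intupperboundestimates} and the weak parabolic Harnack inequality stated above, I chain through a fixed finite collection of geodesic balls connecting the slab to $F$ to propagate positivity, with a constant depending on $F,g,x_1$ but not on $t$, $u_0$, or $\phi$. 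The data-independent $m(F)$ claim follows from the same chain: as $t\to\infty$, $(\inf_{\partial M}\phi)^{\frac{2}{2-n}}\to 0$, so the first term in $(\ref{inequn_bdlbest1-1})$ tends to $C^{-1}x^{\frac{2-n}{2}}$ uniformly in $x\le x_1$, and the resulting lower bound on the slab (and hence, after the Harnack chain, on $F$) depends only on $g$ and $F$.

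The main technical obstacle is keeping the constants in the Harnack chain independent of the data and of $t$. This is possible because Lemma~2.1 is a universal parabolic Harnack estimate whose constants depend only on the local geometry of $(M,g)$ and on the $L^\infty$ size of the coefficient $C_0$ of the linearized zero-order term; the latter is controlled by the interior upper bound of Lemma~\ref{lem_intupperboundestimates}, which is itself $u_0,\phi$-independent for $t\ge \epsilon_0/2$. Thus the composite constant produced by a fixed (finite) Harnack chain covering $F$ is genuinely independent of the data, which is exactly what the last assertion of the lemma demands.
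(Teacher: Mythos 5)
Your proposal is correct and follows essentially the same route as the paper: short-time existence and regularity from parabolic Schauder theory (Theorem 8.2 in \cite{Lieberman}), a priori $L^\infty$ bounds on finite slabs to continue the solution globally, Lemma \ref{lem_asymlowbdnearbd1} for the near-boundary lower bound, and a Harnack chain from the boundary slab into $F$ whose constants are data-independent because the zeroth-order coefficient $R_g+n(n-1)u^{4/(n-2)}$ is controlled by the data-independent interior upper bound of Lemma \ref{lem_intupperboundestimates}. The only (harmless) variation is that you obtain positivity on $[0,T]$ from the explicit spatially constant barrier $\delta e^{-Ct}$, whereas the paper deduces it from the interior Harnack inequality together with $\phi>0$; both arguments are valid.
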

\begin{proof}
Since $(\ref{equn_scalar1})$ is parabolic, by the compatibility condition and regularity of the positive functions $u_0$ and $\phi$, classical theory for semilinear parabolic equations (see Theorem 8.2 in \cite{Lieberman}, the proof of which is based on the Schauder fixed point theorem) yields the existence of a unique positive solution $u$ to $(\ref{equn_scalar1})-(\ref{equn_bdv1})$ on $M\times[0,T_0)$ for some $T_0>0$ with $u$ satisfying $u\in C^{2+\alpha,1+\frac{\alpha}{2}}(M\times[0,T])$ for each $0<T<T_0$ when $u_0\in C^{2,\alpha}(M)$ and $\phi\in C_{loc}^{2+\alpha,1+\frac{\alpha}{2}}(M\times[0,\infty))$ satisfying $(\ref{equn_compatibleequn1-2})$; moreover, $u \in C^{4+\alpha,2+\frac{\alpha}{2}}(M\times[0,T])$ for each $0<T<T_0$ when $u_0\in C^{4,\alpha}(M)$ and $\phi\in C^{4+\alpha,2+\frac{\alpha}{2}}$ satisfy $(\ref{equn_compatibleequn4-2-1})$ in addition. We assume $T_0$ is the maximum time for the existence of the positive solution $u$ on $[0,T_0)$. By maximum principle, we have the upper bound
\begin{align*}
\sup_{q\in M,\,t\in[0,T)}u \leq \max\{\sup_{q\in\partial M,\,t\in[0,T)}\phi,\,\sup_{q\in M}u_0(q),\,\big(\frac{1}{n(n-1)}\max\{-\inf_{q\in M}R_g(q),0\}\big)^{\frac{n-2}{4}}\},
\end{align*}
for any $0<T<T_0$, which is uniformly bounded on $M\times[0,T_0)$. Then standard Schauder theory derives that there exists $C=C(T_0)>0$ such that
\begin{align*}
\|u\|_{C^{2+\alpha,1+\frac{\alpha}{2}}(M\times[0,T])}\leq C
\end{align*}
in the $C^{2+\alpha,1+\frac{\alpha}{2}}$ setting Cauchy--Dirichlet problem, for each $0<T<T_0$; while
\begin{align*}
\|u\|_{C^{4+\alpha,2+\frac{\alpha}{2}}(M\times[0,T])}\leq C
\end{align*}
in the $C^{4+\alpha,2+\frac{\alpha}{2}}$ setting Cauchy--Dirichlet problem, for each $0<T<T_0$. To establish the lower bound estimates, for any $q\in M^{\circ}$, assume $0<r<\frac{\text{dist}(q,\partial M)}{2}$ and $r<1$. Then by the Harnack inequality, there exists a constant $C>0$ depending on $B_{2r}(q)$ but independent of $T$ such that
\begin{align}\label{inequn_Harnack1}
\sup_{ B_r(q)\times[T-\frac{3r^2}{4}, T-\frac{r^2}{2}]}u\leq C \inf_{B_r(q)\times[T-\frac{r^2}{4}, T]}u
\end{align}
for each $r^2<T<T_0$.   By Lemma \ref{lem_intupperboundestimates}, $|R_g+n(n-1)u^{\frac{4}{n-2}}|$ is uniformly bounded in $B_{2r}(q)$ (independent of $T_0$) and hence, the constant $C$ in $(\ref{inequn_Harnack1})$ is independent of $T_0$. Recall that $\phi>0$ on $\partial M \times[0,\infty)$. Therefore, $u$ can be extended to a positive solution of class $C_{loc}^{2+\alpha,1+\frac{\alpha}{2}}(M\times[0,\infty))$ (resp. of class $C_{loc}^{4+\alpha,2+\frac{\alpha}{2}}(M\times[0,\infty))$).

Assume that $\liminf_{t\to\infty}\inf_{\partial M}\phi=\infty$ and $\phi$ satisfies $(\ref{ineqn_bdvcond1})$ as $t\to\infty$. Then by Lemma \ref{lem_asymlowbdnearbd1} we have that there exist $x_1>0$, $C>0$ and $t_2>0$ such that $(\ref{inequn_bdlbest1-1})$ holds in $\{0\leq x\leq x_1\}$ for $t\geq t_2$. Using this uniform lower bound estimate near the boundary, combining with the above Harnack inequality and a finite geodesic-ball cover of the path connecting to $\{0\leq x\leq x_1\}$, we have that for each compact subset $F\subseteq M^{\circ}$, there exists $C=C(F)>0$ such that
\begin{align*}
u(q,t)\geq C
\end{align*}
for $(q,t)\in F\times[0,\infty)$, and hence by Lemma \ref{lem_intupperboundestimates}, $u$ is uniformly bounded from above and below by positive constants in $F\times[0,\infty)$.

On any compact subset $F\subseteq M^{\circ}$, since the upper bound of $u$ obtained in Lemma \ref{lem_intupperboundestimates} is independent of $u_0$ and $\phi$, and so is $|R_g+n(n-1)u^{\frac{4}{n-2}}|$ and hence the constant $C$ in the Harnack inequality is independent of $u_0$ and $\phi$. Therefore, by Lemma \ref{lem_asymlowbdnearbd1} there exists $m>0$ depending on $F$ but independent of $u_0$ and $\phi$ such that
\begin{align*}
\liminf_{t\to\infty}\inf_{q\in F}u(q,t)\geq m.
\end{align*}

\end{proof}

Now we consider the convergence of the flow. To warm up, we first consider the case $R_{g}\geq0$.
\begin{prop}\label{prop_prop1}
Let $(M^n,g)$ be a compact manifold with $g\in C^{4,\alpha}$ up to the boundary such that $R_{g}\geq 0$. Assume $u_0\in C^{4,\alpha}(M)$ and $\phi\in C_{loc}^{4+\alpha,2+\frac{\alpha}{2}}(M\times[0,\infty))$ satisfy $(\ref{equn_compatibleequn1-2})$ and $(\ref{equn_compatibleequn4-2-1})$, and also $\phi_t\geq0$ on $M\times[0,\infty)$, $\liminf_{t\to\infty}\inf_{\partial M}\phi=\infty$ and $\phi$ satisfies $(\ref{ineqn_bdvcond1})$ as $t\to\infty$. Moreover, assume $u_0$ is a positive subsolution to $(\ref{equn_scalarcurvatureequn})$ and
 \begin{align}\label{inequn_compatibleincreasingcondn}
 L(v)(p)\geq0
 \end{align}
 for any $p\in \partial M$ such that $v(p)=0$, where $L$ and $v$ are defined in $(\ref{equn_compatibleequn4-2-1})$. Then we have that the Cauchy--Dirichlet problem $(\ref{equn_scalar1})-(\ref{equn_bdv1})$ has a unique global solution $u$, which converges in $C_{loc}^4(M^{\circ})$ to the unique solution $u_{\infty}$ to the Loewner--Nirenberg problem.
\end{prop}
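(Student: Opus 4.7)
The plan is to combine three ingredients: long-time existence with uniform interior bounds from Lemmas \ref{lem_intupperboundestimates} and \ref{lem_longtimeexistence}; monotonicity of the flow in $t$ via the parabolic maximum principle applied to $u_t$; and a one-sided elliptic barrier provided by the Loewner--Nirenberg solution itself to squeeze the monotone limit.

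First I would invoke Lemma \ref{lem_longtimeexistence} to produce a global solution $u\in C_{loc}^{4+\alpha,2+\frac{\alpha}{2}}(M\times[0,\infty))$; the $C^{4,\alpha}$ regularity of $u_0$ together with the compatibility conditions $(\ref{equn_compatibleequn1-2})$, $(\ref{equn_compatibleequn4-2-1})$ and $(\ref{inequn_compatibleincreasingcondn})$ is precisely what is needed to legitimize differentiating $(\ref{equn_scalar1})$ in $t$. Setting $w=u_t$, the differentiated equation reads
\begin{align*}
w_t=\frac{4(n-1)}{n-2}\Delta w-R_g w-\frac{n(n-1)(n+2)}{n-2}u^{\frac{4}{n-2}}w,
\end{align*}
a linear parabolic equation with bounded coefficients on any $M\times[0,T]$. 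On the parabolic boundary one has $w(\cdot,0)=v\geq 0$ (because $u_0$ is a subsolution of $(\ref{equn_scalarcurvatureequn})$) and $w|_{\partial M\times[0,\infty)}=\phi_t\geq 0$ by hypothesis, while $(\ref{inequn_compatibleincreasingcondn})$ ensures compatibility at the corner $\partial M\times\{0\}$. The parabolic maximum principle then yields $u_t=w\geq0$ throughout $M\times[0,\infty)$, so $u(\cdot,t)$ is pointwise nondecreasing in $t$, and substituting back into $(\ref{equn_scalar1})$ shows that $u(\cdot,t)$ remains a subsolution of $(\ref{equn_scalarcurvatureequn})$ for every $t\ge 0$.

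Next I would derive the upper barrier. Let $u_{\infty}$ denote the Aviles--McOwen solution of $(\ref{equn_scalarcurvatureequn})$--$(\ref{equn_blowingupboundarydata})$. A standard elliptic comparison between the bounded subsolution $u_0$ and the supersolution $u_{\infty}$ (which blows up on $\partial M$) gives $u_0\leq u_{\infty}$ in $M^\circ$; applied on shrinking neighborhoods $\{x>\epsilon\}$ one shows $u(\cdot,t)\leq u_{\infty}$ on $M^\circ$ for all $t\geq0$ by the parabolic maximum principle applied to $u_\infty-u$, whose difference satisfies a linear parabolic inequality with $\xi=\frac{u_\infty^{(n+2)/(n-2)}-u^{(n+2)/(n-2)}}{u_\infty-u}>0$ as a zeroth-order coefficient (the term $R_g\geq0$ is in fact unused at this stage, but the hypothesis $R_g\geq 0$ simplifies Lemma \ref{lem_asymlowbdnearbd1}). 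Monotonicity plus this upper bound produces a pointwise limit $\bar u(p):=\lim_{t\to\infty}u(p,t)$ with $u_0\leq\bar u\leq u_{\infty}$ on $M^\circ$.

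To upgrade to $C_{loc}^4(M^\circ)$ convergence, I would combine the uniform interior upper bound of Lemma \ref{lem_intupperboundestimates} with the lower bound in Lemma \ref{lem_longtimeexistence} to obtain, on each compact $K\subset M^\circ$, two-sided positive bounds on $u$ uniform in $t$; the coefficients of the parabolic equation are then uniformly $C^{2,\alpha}$ in space on $K$, and standard parabolic Schauder estimates applied on cylinders $K\times[t,t+1]$ yield uniform $C^{4+\alpha,2+\frac{\alpha}{2}}$ bounds. A diagonal/Arzela--Ascoli argument gives subsequential convergence in $C^4_{loc}(M^\circ)$, and the monotone limit is independent of the subsequence. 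Since $u_t\to 0$ in this topology (by the uniform estimate on $u_t$ together with $\int_0^\infty u_t\,dt$ being pointwise finite and the monotonicity argument), the limit $\bar u$ satisfies the elliptic equation $(\ref{equn_scalarcurvatureequn})$ in $M^\circ$. Finally, the boundary blow-up follows from $(\ref{inequn_bdlbest1-1})$: fixing $x$ and letting $t\to\infty$ forces $\inf_{\partial M}\phi\to\infty$, so $\bar u(p)\geq \tfrac{1}{C}x(p)^{(2-n)/2}-C$ in a boundary collar, giving $\bar u\to\infty$ on $\partial M$. Uniqueness of the Loewner--Nirenberg solution (\cite{ACF},\cite{GSW}) then identifies $\bar u=u_\infty$.

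The main obstacle I expect is the rigorous passage from monotone pointwise convergence to convergence of $u_t$ to zero on compact subsets of $M^\circ$ in a strong enough sense to conclude $\bar u$ solves the elliptic equation. One can circumvent this either by the Dini-type argument I sketched (using uniform $C^{2+\alpha,1+\frac{\alpha}{2}}$-bounds on $u_t$ via Schauder and the fact that $\int_0^\infty u_t\,dt<\infty$ pointwise), or by the more robust time-translation compactness argument: the sequence $u^{(k)}(p,s):=u(p,t_k+s)$ on $M^\circ\times[0,1]$ subconverges in $C^4$ to a solution of the parabolic equation starting from $\bar u$, which must be stationary since $u^{(k)}$ is monotone and converges to $\bar u$ in $s$. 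Either route gives that $\bar u$ solves $(\ref{equn_scalarcurvatureequn})$, completing the proof.
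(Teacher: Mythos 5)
Your proposal is correct and follows essentially the same route as the paper: long-time existence (Lemma \ref{lem_longtimeexistence}), monotonicity $u_t\geq 0$ via the maximum principle applied to the differentiated equation (the paper's Lemma \ref{lem_monotonocityscalar1}), uniform interior two-sided bounds, passage to the limit with $u_t\to 0$ (the paper uses the Harnack inequality on $v=u_t$, which is the same mechanism as your integrability-of-$u_t$/Dini argument), and identification of the limit through the boundary asymptotics of Lemmas \ref{lem_intupperboundestimates} and \ref{lem_asymlowbdnearbd1} plus uniqueness of the Loewner--Nirenberg solution. Your additional upper barrier $u(\cdot,t)\leq u_{\infty}$ by comparison is valid but not needed here, since the paper's Lemma \ref{lem_intupperboundestimates} already supplies the $t$-uniform interior upper bound.
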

Remark that for a sub-solution $u_0$ of $(\ref{equn_scalarcurvatureequn})$, which is a strict sub-solution in a neighborhood of $\partial M$, the condition $(\ref{inequn_compatibleincreasingcondn})$ disappears automatically. For instance, if $\varphi>0$ is a subsolution of $(\ref{equn_scalarcurvatureequn})$, then $\epsilon \varphi$ is a strict sub-solution for any constant $0<\epsilon<1$. A solution $\varphi>0$ to $(\ref{equn_scalarcurvatureequn})$ automatically satisfies $(\ref{inequn_compatibleincreasingcondn})$. Notice that the functions $\log(t),\,\,t$  and $e^t$ all satisfy $(\ref{ineqn_bdvcond1})$. Also, if $u_0$ is a solution to $(\ref{equn_scalarcurvatureequn})$ in a neighborhood of $\partial M$, $(\ref{inequn_compatibleincreasingcondn})$ holds automatically. To prove this proposition, we need two lemmas.

 \begin{lem}\label{lem_monotonocityscalar1}
Let $(M, g)$  be a smooth compact manifold with boundary such that $R_g\geq0$. Assume $\phi\in C^{4}(\partial M \times [0,+\infty))$ is a positive function with $\phi_t\geq0$, and $u_0\in C^4(M)$ is a positive function which is a subsolution to $(\ref{equn_scalarcurvatureequn})$ on $M$. Assume that a positive function $u\in C^{4,2}(M\times[0,\infty))$ is a solution to the Cauchy--Dirichlet problem $(\ref{equn_scalar1})-(\ref{equn_bdv1})$. Then $u$ satisfies that $u_t\geq0$ in $M^{\circ}\times (0,+\infty)$. That's to say, for any $t>0$, $u(\cdot,t)$ is a subsolution to the Yamabe equation $(\ref{equn_scalarcurvatureequn})$.
 \end{lem}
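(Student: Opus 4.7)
The plan is to set $w := u_t$ and show $w \geq 0$ by exhibiting a linear parabolic equation it satisfies and then applying the minimum principle. First, since $u \in C^{4,2}(M \times [0,\infty))$, I would differentiate the PDE $(\ref{equn_scalar1})$ in $t$ to obtain
\begin{equation*}
w_t = \frac{4(n-1)}{n-2}\Delta w - \Big(R_g + \frac{n(n-1)(n+2)}{n-2}\, u^{\frac{4}{n-2}}\Big) w,
\end{equation*}
a linear parabolic equation in $w$. On each slab $M\times[0,T]$ the coefficients are bounded, since $u$ is bounded above by the maximum principle applied to $(\ref{equn_scalar1})$ and is bounded away from $0$ on any compact subset of the interior by the Harnack inequality argument in Lemma \ref{lem_longtimeexistence}, while on $\partial M$ it equals the positive function $\phi$.

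Next, I would verify that $w \geq 0$ on the parabolic boundary of $M \times [0,\infty)$. On the lateral part $\partial M \times [0,\infty)$ one has $w = \phi_t \geq 0$ by hypothesis. At $t=0$ the PDE itself gives
\begin{equation*}
w(\cdot, 0) = \frac{4(n-1)}{n-2}\Delta u_0 - R_g u_0 - n(n-1) u_0^{\frac{n+2}{n-2}} \geq 0,
\end{equation*}
precisely because $u_0$ is a subsolution of $(\ref{equn_scalarcurvatureequn})$ on $M$.

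Finally, I would apply the parabolic minimum principle on each slab $M \times [0,T]$. The hypothesis $R_g \geq 0$ together with $u > 0$ makes the zeroth-order coefficient $R_g + \frac{n(n-1)(n+2)}{n-2}u^{\frac{4}{n-2}}$ nonnegative, so the minimum principle applies directly in standard form: any strictly negative interior infimum of $w$ attained at $(p_0,t_0) \in M^{\circ} \times (0,T]$ would satisfy $w_t(p_0,t_0) \leq 0$ and $\Delta w(p_0,t_0) \geq 0$, while the zeroth-order term $-(R_g + \frac{n(n-1)(n+2)}{n-2}u^{\frac{4}{n-2}})\, w(p_0,t_0)$ would be strictly positive, contradicting the linearized equation at $(p_0,t_0)$. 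Hence $w \geq 0$ on $M \times [0,T]$, and since $T$ is arbitrary, $u_t = w \geq 0$ on $M^{\circ} \times (0,\infty)$, as desired.

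The argument really reduces to a single application of the linear parabolic minimum principle, so there is no substantial obstacle once the linearized equation is derived; the role of the hypothesis $R_g \geq 0$ is simply to ensure the good sign of the zeroth-order coefficient. Without it one would have to pass to $\tilde{w} := e^{-\lambda t} w$ with $\lambda$ large depending on $\sup_M R_g^{-}$, which would still work since $u^{\frac{4}{n-2}}$ is bounded on each slab, but this refinement is not needed here.
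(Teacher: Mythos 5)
Your proposal is correct and follows essentially the same route as the paper: differentiate the equation in $t$, observe that $v=u_t$ solves a linear parabolic Cauchy--Dirichlet problem with nonnegative initial data (from the subsolution hypothesis on $u_0$) and nonnegative boundary data $\phi_t$, and conclude by the maximum principle, the zeroth-order coefficient $R_g+\tfrac{n(n-1)(n+2)}{n-2}u^{\frac{4}{n-2}}$ having the right sign. No issues.
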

  \begin{proof}
  The proof is an application of the maximum principle to the equation satisfied by $u_t$. Indeed, denote $v=u_t$. It is clear that $v\in C^{2,1}(M\times [0,+\infty))$. We take derivative of $t$ on both sides of $(\ref{equn_scalar1})$ and $(\ref{equn_bdv1})$ to obtain the Cauchy--Dirichlet problem
  \begin{align}
  &\label{equn_linearequation1-1}v_t=\frac{4(n-1)}{n-2}\Delta v-R_gv-\frac{n(n-1)(n+2)}{n-2}u^{\frac{4}{n-2}}v,\,\,\,\text{in}\,\,M\times [0,+\infty),\\
&v(p,0)=u_t(p,0)=\frac{4(n-1)}{n-2}\Delta u_0(p)-R_gu_0(p)-n(n-1)u_0(p)^{\frac{n+2}{n-2}}\geq0,\,\,p\in M,\nonumber\\
&v(q,t)=\phi_t(q,t)\geq0,\,\,q\in\,\partial M.\nonumber
  \end{align}
  Therefore, by maximum principle, $v=u_t\geq0$ on $M\times[0,+\infty)$. Moreover, if there exists a constant $t_1>0$ such that $\phi_t>0$ for $t>t_1$, then by strong maximum principle, we have $v=u_t>0$ in $M^{\circ}\times(t_1,+\infty)$.
  \end{proof}

Proposition \ref{prop_prop1} is a direct consequence of the following lemma.
 \begin{lem}
Let $(M, g)$, $u_0$ and $\phi$ be as in Proposition \ref{prop_prop1}. Then the unique solution $u$ to the Cauchy--Dirichlet problem $(\ref{equn_scalar1})-(\ref{equn_bdv1})$ converges in $C_{loc}^{4}(M^{\circ})$ to the solution $u_{\infty}$ of the Loewner--Nirenberg problem $(\ref{equn_scalarcurvatureequn})-(\ref{equn_blowingupboundarydata})$ as $t\to +\infty$. Moreover, there exists a constant $C>0$ such that $\frac{1}{C}x^{\frac{2-n}{2}}\leq u_{\infty}\leq Cx^{\frac{2-n}{2}}$ near the boundary $\partial M$, where $x$ is the distance function to the boundary.
 \end{lem}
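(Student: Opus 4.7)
The plan is to combine monotonicity of the flow with the uniform interior bounds proved earlier in the section to extract a pointwise limit $u_\infty$, upgrade this to $C^4_{\mathrm{loc}}$ convergence via parabolic Schauder theory applied to time-shifted solutions, and finally identify $u_\infty$ with the Loewner--Nirenberg solution using a matching two-sided boundary blow-up estimate together with uniqueness.

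First, by Lemma \ref{lem_monotonocityscalar1}, the hypotheses $\phi_t\geq 0$ and $u_0$ being a subsolution of $(\ref{equn_scalarcurvatureequn})$ force $u_t\geq 0$ on $M\times[0,\infty)$. Lemma \ref{lem_intupperboundestimates} then yields, on every compact $F\subseteq M^\circ$, a uniform upper bound $u(q,t)\leq C(F)$ independent of $t$. Consequently the nondecreasing family $\{u(\cdot,t)\}$ converges pointwise on $M^\circ$ to some $u_\infty\geq u_0>0$. To upgrade the convergence, invoke Lemma \ref{lem_longtimeexistence}: since $\liminf_{t\to\infty}\inf_{\partial M}\phi=\infty$ and $\phi$ satisfies $(\ref{ineqn_bdvcond1})$, the solution $u$ is uniformly bounded \emph{away from zero} on each compact $F\subseteq M^\circ$ as well. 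The nonlinear coefficient $u^{4/(n-2)}$ in $(\ref{equn_scalar1})$ is therefore uniformly bounded on parabolic cylinders in $M^\circ\times[0,\infty)$, and standard interior parabolic Schauder estimates give uniform $C^{4+\alpha,2+\alpha/2}$ bounds on any compact parabolic subcylinder.

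For any sequence $t_k\to\infty$ I would then consider the time shifts $u_k(p,s):=u(p,t_k+s)$ for $s\in[-1,1]$; by the Schauder bounds they subconverge in $C^{4,2}_{\mathrm{loc}}(M^\circ\times[-1,1])$ to a limit $\tilde{u}$. Monotonicity of $u$ in $t$ forces $\tilde{u}(p,s)=u_\infty(p)$ for all $s$, so $\partial_t\tilde{u}\equiv 0$ and passing to the limit in $(\ref{equn_scalar1})$ shows $u_\infty$ is a positive classical solution of $(\ref{equn_scalarcurvatureequn})$ in $M^\circ$. Since every subsequential limit equals $u_\infty$, the entire family converges to $u_\infty$ in $C^4_{\mathrm{loc}}(M^\circ)$.

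For the boundary asymptotics, Lemma \ref{lem_asymlowbdnearbd1} gives
\[
u(p,t)\geq \tfrac{1}{C}\bigl(x(p)+(\inf_{\partial M}\phi(\cdot,t))^{2/(2-n)}\bigr)^{(2-n)/2}-C
\]
on $\{x\leq x_1\}$ for $t\geq t_1$; letting $t\to\infty$ and using $\phi\to\infty$ yields $u_\infty(p)\geq \tfrac{1}{C}x(p)^{(2-n)/2}-C$. For the matching upper bound, for an interior $q$ near $\partial M$ I would apply Lemma \ref{lem_intupperboundestimates} with center $p=q$ and radius $r=x(q)/3$ (so $\bar B_{2r}(q)\subseteq M^\circ$), obtaining $u(q,t)\leq C_3 r^{-(n-2)/2}=C'\,x(q)^{-(n-2)/2}$ uniformly in $t$; passing $t\to\infty$ gives the desired upper bound for $u_\infty$. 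In particular $u_\infty\to\infty$ at $\partial M$, so $u_\infty$ solves the Loewner--Nirenberg problem $(\ref{equn_scalarcurvatureequn})$--$(\ref{equn_blowingupboundarydata})$, and the uniqueness result of \cite{ACF} identifies it with the unique such solution.

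The main technical obstacle is the second step: converting monotone pointwise convergence into $C^4_{\mathrm{loc}}$ convergence and verifying that the parabolic equation passes to the elliptic equation in the limit. There is no direct quantitative decay estimate available for $u_t$, so one cannot argue $u_t\to 0$ by a maximum-principle bound on the linearized equation $(\ref{equn_linearequation1-1})$; instead, the time-shift compactness argument above does the work, exploiting monotonicity to force $\partial_t\tilde u\equiv 0$ in every limit.
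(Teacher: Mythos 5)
Your proposal is correct, and its overall skeleton (monotonicity from Lemma \ref{lem_monotonocityscalar1}, uniform two-sided interior bounds, pointwise convergence, identification of the limit via the boundary asymptotics and uniqueness) matches the paper's. The one step where you genuinely diverge is the upgrade from monotone pointwise convergence to locally uniform (and then $C^4_{loc}$) convergence. The paper handles this by applying the parabolic Harnack inequality to $v=u_t\geq 0$, which solves the linear equation $(\ref{equn_linearequation1-1})$ with locally bounded coefficients: since $u$ is bounded and nondecreasing, $\int_T^{T+1}v(q,t)\,dt=u(q,T+1)-u(q,T)\to 0$ pointwise, and Harnack converts this into $u_t\to 0$ locally uniformly. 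So your closing remark that ``there is no direct quantitative decay estimate available for $u_t$'' and that one cannot argue via the linearized equation is not accurate as a description of what is possible --- it is exactly what the paper does (via Harnack, not a bare maximum principle). Your alternative, taking time shifts $u_k(p,s)=u(p,t_k+s)$, extracting $C^{4,2}_{loc}$ limits by interior Schauder estimates, and using monotonicity to force the limit to be $s$-independent, is equally standard and complete; it buys you the elliptic limit equation without ever discussing the equation satisfied by $u_t$, at the cost of a compactness/diagonal argument in place of a one-line Harnack estimate. Both routes rely on the same uniform interior upper and lower bounds, and your treatment of the boundary asymptotics (letting $t\to\infty$ in Lemma \ref{lem_asymlowbdnearbd1} and choosing $r=x(q)/3$ in Lemma \ref{lem_intupperboundestimates}) is exactly the paper's.
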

 \begin{proof}
 By Lemma \ref{lem_longtimeexistence}, there exists a unique positive solution $u\in C_{loc}^{4+\alpha,2+\frac{\alpha}{2}}(M\times[0,\infty))$ to $(\ref{equn_scalar1})-(\ref{equn_bdv1})$. By Lemma \ref{lem_monotonocityscalar1}, $u_t\geq0$ on $M\times[0,\infty)$. Since we have the upper and lower bound estimates on $u$ on any compact sub-domain of $M^{\circ}$ and $u$ is increasing pointwisely on $M^{\circ}$ as $t$ increases, we have that $u$ converges pointwisely in $M^{\circ}$ as $t\to+\infty$. By the Harnack inequality for $v=u_t$ in the parabolic equation $(\ref{equn_linearequation1-1})$, we have that $u_t\to0$ locally uniformly and hence $u$ converges locally uniformly in $M^{\circ}$ to a positive function $u_{\infty}$. Using classical parabolic estimates, we have that $u(\cdot, t)\to u_{\infty}$ in $C_{loc}^4(M)$ as $t\to +\infty$. Therefore, $u_{\infty}$ is a solution to $(\ref{equn_scalarcurvatureequn})$ in $M^{\circ}$. By the upper bound and lower bound estimates of $u$ near the boundary $\partial M$ in Lemma \ref{lem_intupperboundestimates} and Lemma \ref{lem_asymlowbdnearbd1}, we obtain the estimate of $u_{\infty}$ near the boundary as stated in the lemma, and hence $u_{\infty}$ is the unique solution to the Loewner--Nirenberg problem. This completes the proof of the lemma and Proposition \ref{prop_prop1}.
 \end{proof}

  \begin{proof}[Proof of Theorem \ref{thm_scalarpositiveupperb}]
  We first construct the boundary data $\phi\in C_{loc}^{2+\alpha,1+\frac{\alpha}{2}}(\partial M\times [0,\infty))$. Pick up a positive number $t_1>0$. By Theorem \ref{thm_bddataextension1}, we let $\phi$ be a positive function and satisfy the compatibility condition $(\ref{equn_compatibleequn1-2})$ at $t=0$, $\phi_t\geq 0$ for $t\geq t_1$, $\phi\to \infty$ as $t\to \infty$, and let $\phi$ satisfy $(\ref{ineqn_bdvcond1})$ as $t\to\infty$ (see the remark after the statement of Proposition \ref{prop_prop1}). Since $u_0$ is not necessarily a subsolution of $(\ref{equn_scalarcurvatureequn})$, $\phi_t$ is not necessarily positive at $t=0$. We let $\phi>0$ for $t\in[0,t_1]$. By Lemma \ref{lem_longtimeexistence}, there exists a unique positive solution $u\in C_{loc}^{2+\alpha,1+\frac{\alpha}{2}}(M\times[0,\infty))$ to $(\ref{equn_scalar1})-(\ref{equn_bdv1})$.

  By Lemma \ref{lem_Dirichletbvpscequn1}, for any $\epsilon>0$, there exists a unique positive solution $w\in C^{4,\alpha}(M)$ to the Dirichlet boundary value problem
  \begin{align*}
  &\frac{4(n-1)}{n-2}\Delta w-R_gw-n(n-1)w^{\frac{n+2}{n-2}}=0,\\
  &w\big|_{\partial M}=\epsilon.
  \end{align*}
  By maximum principle, $w\leq \epsilon$.

 Let $\epsilon>0$ be a small constant so that $\epsilon< \frac{1}{10n} \inf_{\partial M\times[0,t_1]}\phi$. It is easy to choose a boundary data $\tilde{\phi}(q,t)\in C_{loc}^{4+\alpha,2+\frac{\alpha}{2}}(\partial M \times [0,+\infty))$ such that $\tilde{\phi}=\epsilon$ and $\tilde{\phi}_t=\tilde{\phi}_{tt}=0$ on $\partial M\times\{0\}$, moreover, $\tilde{\phi}_t\geq0$ and $\tilde{\phi}\leq \phi$ on $\partial M\times [0,+\infty)$. Also, we require that $\tilde{\phi}\to\infty$ and $\tilde{\phi}$ satisfies $(\ref{ineqn_bdvcond1})$ as $t\to\infty$ (for this we can just let $\tilde{\phi}=\phi$ for $t$ large). Therefore, by Proposition \ref{prop_prop1} there exists a unique solution $\tilde{u}$ to the problem
  \begin{align*}
&\tilde{u}_t=\frac{4(n-1)}{n-2}\Delta \tilde{u}-R_g\tilde{u}-n(n-1)\tilde{u}^{\frac{n+2}{n-2}},\,\,\,\text{in}\,\,M\times [0,+\infty),\\
&\tilde{u}(p,0)=w,\,\,p\in M,\\
&\tilde{u}(q,t)=\tilde{\phi}(q,t),\,\,q\in\,\partial M,
\end{align*}
 such that $\tilde{u}$ is increasing in $t$, and converges to the Loewner--Nirenberg solution $u_{\infty}$. Let $v=u-\tilde{u}$. Then $v$ satisfies
 \begin{align*}
 &v_t=\frac{4(n-1)}{n-2}\Delta v-(R_g+n(n-1)\zeta)v,\,\,\,\text{in}\,\,M\times [0,+\infty),\\
&v(p,0)=u_0(p)-w(p)>0,\,\,p\in M,\\
&v(q,t)=\phi(q,t)-\tilde{\phi}(q,t)\geq0,\,\,q\in\,\partial M,
 \end{align*}
where $\zeta=\frac{u^{\frac{n+2}{n-2}}-\tilde{u}^{\frac{n+2}{n-2}}}{u-\tilde{u}}$ for $u\neq \tilde{u}$, and $\zeta=\frac{n+2}{n-2}u^{\frac{4}{n-2}}$ otherwise. In particular, $\zeta>0$. By maximum principle, $v\geq 0$ on $M\times[0,+\infty)$.

 Now we establish the upper bound of $u$. Let $\xi(p,t)=u(p,t)-u_{\infty}(p)$, with $u_{\infty}$ the Loewner--Nirenberg solution. Therefore, $\xi(p,t)\to-\infty$ as $p\to\partial M$ for each $t>0$. Hence, for any $t>0$, there exists $p_t\in M^{\circ}$ such that $\xi(p_t,t)=\sup_{p\in M^{\circ}}\xi(p,t)$, and hence $\Delta_g\xi(p_t,t)\leq 0$. Therefore, we have
 \begin{align}\label{inequn_upperb1}
 \xi_t&=\frac{4(n-1)}{n-2}\Delta \xi-(R_g+n(n-1)\mu(x,t))\xi\\
 &\leq -(R_g+n(n-1)\mu)\xi,\nonumber
 \end{align}
at the point $(p_t,t)$ for $t>0$, where $\mu(x,t)=\frac{u^{\frac{n+2}{n-2}}-u_{\infty}^{\frac{n+2}{n-2}}}{u-u_{\infty}}$ for $u\neq u_{\infty}$, and $\mu=\frac{n+2}{n-2}u_{\infty}^{\frac{4}{n-2}}$ otherwise. In particular, $\mu>0$. By maximum principle for the equation $(\ref{inequn_upperb1})$ satisfied by $\xi$, if $\xi(p_{t_1},t_1)\leq0$ for some $t_1>0$, then $\xi(p,t)\leq 0$ for all $t>t_1$. Therefore, there are two possibilities: one is that there exists $t_2>0$ so that for $t>t_2$ we have $\xi\leq 0$ on $M^{\circ}$; the second case is that $\xi(p_t,t)>0$ for $t\in[0,+\infty)$. For the second case, let $\eta(t)=\sup_{p\in M^{\circ}}\xi(p,t)$. By the inequality in $(\ref{inequn_upperb1})$, $\eta(t)$ is decreasing since $\eta(t)>0$. Now we use the discussion as in \cite{Hamilton} to show that $\displaystyle\limsup_{t\to+\infty}\eta(t)\leq 0$. Denote $(\frac{d\eta}{dt})_+(t)=\displaystyle\limsup_{\tau\searrow 0}\frac{\eta(t+\tau)-\eta(t)}{\tau}$ and define the set
 \begin{align*}
 S(t)=\{p\in M^{\circ}\big|\, p\,\,\text{is a maximum point of }\,\xi\,\,\text{on}\,\,M^{\circ}\times\{t\}\}.
  \end{align*}
 Then $S(t)$ is compact for any $t\geq0$. By Lemma 3.5 in \cite{Hamilton},
 \begin{align*}
 (\frac{d\eta}{dt})_+(t)\leq \sup\{\xi_t(p_t,t)\big|\,p_t\in\, S(t)\},
  \end{align*}
  since $\eta(t)>0$. By the mean value theorem we have that $\frac{u^{\frac{n+2}{n-2}}-u_{\infty}^{\frac{n+2}{n-2}}}{u-u_{\infty}}\geq \frac{n+2}{n-2}(\displaystyle\inf_{q\in M}u_{\infty})^{\frac{4}{n-2}}$. Therefore, for the second case,
 \begin{align*}
 (\frac{d\eta}{dt})_+(t)\leq -(R_g+\frac{n(n-1)(n+2)}{n-2}(\displaystyle\inf_{q\in M}u_{\infty})^{\frac{4}{n-2}})\eta(t),
 \end{align*}
 for $t\geq0$. By integration or a contradiction argument we can easily obtain that $\displaystyle\limsup_{t\to+\infty}\eta(t)\leq 0$. In summary, for both cases we obtain the upper bound control that
 \begin{align*}
 \displaystyle\limsup_{t\to+\infty}\sup_{q\in M^{\circ}}(u(q,t)-u_{\infty}(q))\leq0.
 \end{align*}
 Based on the upper bound and lower bound control on $u$, we have that $\displaystyle\lim_{t\to+\infty}u(q,t)\to u_{\infty}(q)$ locally uniformly on $M^{\circ}$ as $t\to+\infty$. By the standard interior parabolic estimates, we obtain that the convergence is in $C_{loc}^2(M^{\circ})$ sense.
  \end{proof}

Let $(M,g)$ be a general compact Riemannian manifold of class $C^{4,\alpha}$ with boundary. By Lemma \ref{lem_Dirichletbvpscequn1}, there exists a conformal metric $h\in[g]$ of class $C^{4,\alpha}$ such that $R_h=-n(n-1)$. We still denote it as $g$ and choose it as the background metric of our flow,
and hence the Cauchy--Dirichlet problem $(\ref{equn_scalar1})-(\ref{equn_bdv1})$ becomes
\begin{align}\label{equn_scalar2-1}
\begin{split}
&u_t=\frac{4(n-1)}{n-2}\Delta_{g} u+n(n-1)\big(u-u^{\frac{n+2}{n-2}}\big),\,\,\,\text{in}\,\,M\times [0,+\infty),\\
&u(p,0)=u_0(p),\,\,p\in M,\\
&u(q,t)=\phi(q,t),\,\,q\in\,\partial M,
\end{split}
\end{align}
with $\phi> 0$ and $u_0>0$ compatible, and $\phi\to +\infty$ uniformly as $t\to+\infty$. Recall that we have a unique positive solution $u\in C_{loc}^{4+\alpha,2+\frac{\alpha}{2}}(M\times[0,+\infty))$ to the Cauchy--Dirichlet boundary value problem with uniform upper and lower bound estimates on any compact sub-domain in $M^{\circ}$ for $t\in[0,+\infty)$, and the upper bound and lower bound asymptotic behavior estimates near the boundary as $t\to+\infty$. We now consider the convergence. To insure the asymptotic behavior near the boundary as $t\to+\infty$, we always assume the boundary data $\phi$ satisfies $(\ref{ineqn_bdvcond1})$ for $t$ large.

We start with the special initial data $u_0=1$.
\begin{lem}\label{lem_generalconformalclassconvergence1}
Assume $(M,g)$ is a compact Riemannian manifold with boundary of class $C^{4,\alpha}$ and $R_g=-n(n-1)$. Let $u>0$ be the solution to the Cauchy--Dirichlet problem
\begin{align}\label{equn_scalar20}
\begin{split}
&u_t=\frac{4(n-1)}{n-2}\Delta_{g} u+n(n-1)\big(u-u^{\frac{n+2}{n-2}}\big),\,\,\,\text{in}\,\,M\times [0,+\infty),\\
&u(p,0)=1,\,\,p\in M,\\
&u(q,t)=\phi(q,t),\,\,q\in\,\partial M,
\end{split}
\end{align}
with a positive function $\phi\in C_{loc}^{4+\alpha,2+\frac{\alpha}{2}}(\partial M\times[0,+\infty))$ such that $\phi(q,0)=1$, $\phi_t(q,0)=\phi_{tt}(q,0)=0$ for $q\in \partial M$, $\phi_t\geq0$ on $\partial M\times[0,+\infty)$ and $\liminf_{t\to\infty}\inf_{\partial M}\phi=\infty$. Assume $\phi$ satisfies $(\ref{ineqn_bdvcond1})$. Then $u\to u_{\infty}$ in $C_{loc}^{4}(M^{\circ})$ as $t\to +\infty$, where $u_{\infty}$ is the solution to the Loewner--Nirenberg problem $(\ref{equn_scalarcurvatureequn})-(\ref{equn_blowingupboundarydata})$.
\end{lem}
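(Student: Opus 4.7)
The plan is to adapt the strategy of Proposition \ref{prop_prop1}, using the key observation that under $R_g = -n(n-1)$ the initial datum $u_0 \equiv 1$ is actually a \emph{solution} (not merely a subsolution) of $(\ref{equn_scalarcurvatureequn})$. A direct substitution shows that
$v := \frac{4(n-1)}{n-2}\Delta u_0 - R_g u_0 - n(n-1) u_0^{(n+2)/(n-2)} \equiv 0$,
so the compatibility conditions $(\ref{equn_compatibleequn1-2})$ and $(\ref{equn_compatibleequn4-2-1})$ reduce exactly to the hypotheses $\phi(\cdot,0) = 1$, $\phi_t(\cdot,0) = 0$, and $\phi_{tt}(\cdot,0) = L(0) = 0$. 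Lemma \ref{lem_longtimeexistence} therefore yields a unique positive $u \in C_{loc}^{4+\alpha, 2+\frac{\alpha}{2}}(M\times[0,\infty))$ solving $(\ref{equn_scalar20})$, together with a uniform upper bound via Lemma \ref{lem_intupperboundestimates} and, using Lemma \ref{lem_asymlowbdnearbd1} and the hypothesis $\liminf_{t\to\infty}\inf_{\partial M}\phi = \infty$, a positive lower bound on every compact $F \subset M^\circ$.

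Next I establish the monotonicity $u_t \geq 0$. Differentiating $(\ref{equn_scalar20})$ in $t$ and setting $w = u_t$, the function $w$ solves a linear parabolic equation whose coefficients are locally bounded thanks to the $L^\infty$ bound on $u$. Although $R_g < 0$ here (so Lemma \ref{lem_monotonocityscalar1} does not apply directly), the standard maximum principle still applies to $w$ after the substitution $\tilde{w} = e^{-\lambda t} w$ with $\lambda$ sufficiently large to absorb the zeroth-order coefficient. Since $w(\cdot,0) = v \equiv 0$ and $w(q,t) = \phi_t(q,t) \geq 0$ on $\partial M$, this yields $u_t = w \geq 0$ on $M\times[0,\infty)$. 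Thus $u(\cdot, t)$ is pointwise nondecreasing in $t$, and combined with the uniform upper bound on compacts of $M^\circ$, it admits a pointwise limit $u_\infty > 0$ in $M^\circ$.

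The final step is to upgrade the pointwise convergence to $C_{loc}^4(M^\circ)$ and to identify $u_\infty$. For any compact $F \subset M^\circ$, monotonicity and the uniform bound on $u$ yield $\int_T^{T+1}\!\!\int_F u_t\, dV_g\, dt \to 0$ as $T \to \infty$. Applying the parabolic weak Harnack inequality to the nonnegative solution $w = u_t$ on parabolic cylinders contained in $M^\circ$ (with Harnack constants uniform in $T$, since the coefficient $u^{4/(n-2)}$ of the linearized equation is uniformly bounded on compacts by Lemma \ref{lem_intupperboundestimates}) converts this into $u_t \to 0$ locally uniformly in $M^\circ$. Interior parabolic Schauder estimates then give uniform $C^{4+\alpha, 2+\frac{\alpha}{2}}$ bounds for $u$ on compact parabolic subcylinders, so the convergence improves to $C_{loc}^4(M^\circ)$. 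Passing to the limit in $(\ref{equn_scalar20})$ and using $u_t \to 0$ shows $u_\infty$ satisfies $(\ref{equn_scalarcurvatureequn})$ in $M^\circ$; combined with the blow-up asymptotics at $\partial M$ from Lemma \ref{lem_asymlowbdnearbd1}, the uniqueness of the Loewner--Nirenberg solution identifies $u_\infty$ as the desired limit. The main obstacle is the Harnack-and-estimate step just described: turning the mere $L^1$-in-time integrability of $u_t$ into locally uniform decay, with Harnack constants independent of $t$, which is precisely what the uniform in $t$ control on $u$ on compact subsets of $M^\circ$ enables.
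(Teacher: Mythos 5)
Your proposal is correct and follows essentially the same route as the paper: long-time existence and uniform two-sided bounds from Lemmas \ref{lem_longtimeexistence}, \ref{lem_intupperboundestimates} and \ref{lem_asymlowbdnearbd1}, monotonicity via the maximum principle applied to $u_t$, locally uniform decay of $u_t$ via the Harnack inequality, interior Schauder estimates to upgrade to $C_{loc}^4$, and identification of the limit through the boundary asymptotics and uniqueness. The only cosmetic difference is that the paper avoids the $e^{-\lambda t}$ substitution by noting that the zeroth-order coefficient $n(n-1)\bigl(1-\tfrac{n+2}{n-2}u^{\frac{4}{n-2}}\bigr)$ is already negative since $u\geq 1$ by the maximum principle.
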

 \begin{proof}
 By Lemma \ref{lem_longtimeexistence}, we have that there exists a positive solution $u\in C_{loc}^{4+\alpha,2+\frac{\alpha}{2}}(M\times[0,\infty))$ to the Cauchy--Dirichlet problem $(\ref{equn_scalar20})$. By maximum principle, we have that $u\geq 1$ in $M\times[0,+\infty)$ and $u>1$ for $t$ large. As in Lemma \ref{lem_monotonocityscalar1},  the solution $u$ to $(\ref{equn_scalar20})$ is increasing. Indeed, let $v=u_t$, and then $v$ satisfies
\begin{align*}
&v_t=\frac{4(n-1)}{n-2}\Delta_{g} v+n(n-1)\big(1-\frac{n+2}{n-2}u^{\frac{4}{n-2}}\big)v,\,\,\,\text{in}\,\,M\times [0,+\infty),\\
&v=0,\,\,p\in M,\\
&v(q,t)=\phi_t(q,t)\geq0,\,\,q\in\,\partial M.
\end{align*}
By maximum principle, $v\geq0$ for $t\geq0$ and $v>0$ for $t$ large. Therefore, $u$ is increasing in $t$. By the uniform upper bound estimates on any compact sub-domain of $M^{\circ}$, we have that $u$ converges pointwisely to $u_{\infty}>0$ locally in $M^{\circ}$. Using the Harnack inequality on $v$ we have that $u$ converges to $u_{\infty}$ locally uniformly in $M^{\circ}$ as $t\to +\infty$. By the standard parabolic estimates, the convergence is in $C_{loc}^{4}(M^{\circ})$. By Lemma \ref{lem_intupperboundestimates} and Lemma \ref{lem_asymlowbdnearbd1}, there exists $C>0$ such that
\begin{align*}
\frac{1}{C}x^{-\frac{n-2}{2}}\leq u_{\infty}\leq C x^{-\frac{n-2}{2}},
\end{align*}
in a neighborhood of $\partial M$, where $x$ is the distance function to $\partial M$.
\end{proof}

\begin{lem}\label{lem_genc}
Assume $(M,g)$ is a compact Riemannian manifold with boundary of class $C^{4,\alpha}$ and $R_g=-n(n-1)$. For any $u_0\in C^{2,\alpha}(M)$ such that $u_0>1$, there exists a positive function $\phi\in C_{loc}^{2+\alpha,1+\frac{\alpha}{2}}(M,[0,+\infty))$ satisfying the compatibility condition $(\ref{equn_compatibleequn1-2})$ with $u_0$, and satisfying $(\ref{ineqn_bdvcond1})$ as $t\to\infty$ such that there exists a unique positive solution $u\in C_{loc}^{2+\alpha,1+\frac{\alpha}{2}}(M\times[0,+\infty))$ to $(\ref{equn_scalar2-1})$, and $u$ converges to $u_{\infty}$ in $C_{loc}^2(M^{\circ})$ as $t\to+\infty$, where $u_{\infty}$ is the solution to the Loewner--Nirenberg problem $(\ref{equn_scalarcurvatureequn})-(\ref{equn_blowingupboundarydata})$.
\end{lem}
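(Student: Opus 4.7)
The plan is to reduce this lemma to the unit initial data case already handled by Lemma \ref{lem_generalconformalclassconvergence1}, by sandwiching the solution between a monotone lower barrier and the Loewner--Nirenberg solution $u_\infty$ via Hamilton's technique for the upper bound. The key point is that since $R_g=-n(n-1)$, the constant $1$ is itself a solution of $(\ref{equn_scalarcurvatureequn})$, so it plays the role of the small subsolution $w$ used in the proof of Theorem \ref{thm_scalarpositiveupperb}.

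First I would construct the boundary data $\phi$ in the same spirit as the construction displayed after Theorem \ref{thm_scalarpositiveupperb}. By Theorem \ref{thm_bddataextension1} there is a function $\xi\in C^{2+\alpha,1+\alpha/2}(\partial M\times[0,\epsilon])$ satisfying the compatibility condition $(\ref{equn_compatibleequn1-2})$ with $u_0$ at $t=0$; since $u_0>1$ we may shrink $\epsilon$ so that $\xi\geq 1+\delta$ on $\partial M\times[0,\epsilon]$ for some $\delta>0$. Pick a smooth cutoff $\eta$ with $\eta=0$ on $[0,\epsilon/2]$ and $\eta=1$ on $[\epsilon,\infty)$ and set
\begin{align*}
\phi(p,t)=(1-\eta(t))\xi(p,t)+\eta(t)f(t),
\end{align*}
with $f(t)=c(t+1)$ or $f(t)=ce^{t}$ and $c>0$ chosen so large that $f\geq\xi$ on $[\epsilon/2,\epsilon]$. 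Then $\phi\in C_{loc}^{2+\alpha,1+\alpha/2}(\partial M\times[0,\infty))$ satisfies $(\ref{equn_compatibleequn1-2})$ at $t=0$, is bounded below by $1$, tends to $+\infty$ as $t\to\infty$, and satisfies $(\ref{ineqn_bdvcond1})$. Lemma \ref{lem_longtimeexistence} then yields a unique positive solution $u\in C_{loc}^{2+\alpha,1+\alpha/2}(M\times[0,\infty))$ to $(\ref{equn_scalar2-1})$.

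Second, for the lower bound I would invoke Lemma \ref{lem_generalconformalclassconvergence1}. Choose $\tilde\phi\in C_{loc}^{4+\alpha,2+\alpha/2}(\partial M\times[0,\infty))$ with $\tilde\phi(q,0)=1$, $\tilde\phi_t(q,0)=\tilde\phi_{tt}(q,0)=0$, $\tilde\phi_t\geq 0$, $\tilde\phi\leq\phi$ everywhere, $\tilde\phi\to\infty$ and satisfying $(\ref{ineqn_bdvcond1})$ (for instance, let $\tilde\phi$ agree with $\phi$ for $t$ large and interpolate smoothly from $1$). Lemma \ref{lem_generalconformalclassconvergence1} produces a monotone increasing solution $u_1$ with initial data $1$ and boundary data $\tilde\phi$, with $u_1\to u_\infty$ in $C_{loc}^{4}(M^\circ)$. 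Setting $v=u-u_1$, a direct subtraction gives
\begin{align*}
v_t=\frac{4(n-1)}{n-2}\Delta_g v+n(n-1)(1-\zeta)v,
\end{align*}
where $\zeta=(u^{(n+2)/(n-2)}-u_1^{(n+2)/(n-2)})/(u-u_1)>0$ when $u\neq u_1$ and $\zeta=\tfrac{n+2}{n-2}u^{4/(n-2)}$ otherwise. Since $u_0>1=u_1(\cdot,0)$ on $M$ and $\phi\geq\tilde\phi$ on $\partial M\times[0,\infty)$, the parabolic maximum principle yields $u\geq u_1$ on $M\times[0,\infty)$.

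Third, for the upper bound I would repeat verbatim the Hamilton envelope argument at the end of the proof of Theorem \ref{thm_scalarpositiveupperb}. Setting $\xi(p,t)=u(p,t)-u_\infty(p)$, Lemma \ref{lem_intupperboundestimates} bounds $u$ uniformly on interior compact sets while $u_\infty\to\infty$ at $\partial M$, so $\xi(\cdot,t)\to-\infty$ near $\partial M$ and its supremum is attained at an interior point $p_t$ where $\Delta_g\xi\leq 0$. At such a point,
\begin{align*}
\xi_t\leq -(R_g+n(n-1)\mu)\xi,
\end{align*}
with $\mu\geq \tfrac{n+2}{n-2}(\inf_{M}u_\infty)^{4/(n-2)}>0$. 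Applying Lemma 3.5 of \cite{Hamilton} to $\eta(t):=\sup_{M^\circ}\xi(\cdot,t)$ shows $(d\eta/dt)_+(t)\leq -c_0\eta(t)$ as long as $\eta(t)>0$, and the dichotomy used in the proof of Theorem \ref{thm_scalarpositiveupperb} gives $\limsup_{t\to\infty}\sup_{M^\circ}(u-u_\infty)\leq 0$. Combined with $u\geq u_1\to u_\infty$ this forces $u(\cdot,t)\to u_\infty$ locally uniformly on $M^\circ$, and standard interior parabolic Schauder estimates upgrade the convergence to $C_{loc}^{2}(M^\circ)$. The main obstacle is the absence of monotonicity: since $u_0$ need not be a subsolution of $(\ref{equn_scalarcurvatureequn})$ one cannot repeat the clean argument of Lemma \ref{lem_monotonocityscalar1} directly, and one genuinely needs both a monotone comparison barrier (supplied by Lemma \ref{lem_generalconformalclassconvergence1}) from below and Hamilton's envelope estimate from above.
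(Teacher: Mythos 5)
Your proposal is correct and follows essentially the same route as the paper: construct $\phi$ via Theorem \ref{thm_bddataextension1} and a cutoff, use Lemma \ref{lem_generalconformalclassconvergence1} with initial data $1$ and a smaller boundary datum $\tilde\phi\leq\phi$ as a monotone lower barrier via the maximum principle, and close the upper bound with Hamilton's envelope argument applied to $u-u_\infty$. The only point to make explicit is that the exponential decay rate in the Hamilton step is positive because $R_g=-n(n-1)$ forces $R_g+n(n-1)\mu>0$ only once one notes $u_\infty\geq 1$ (so that $\mu\geq\frac{n+2}{n-2}>1$), which follows from the maximum principle exactly as the paper records.
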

\begin{proof}
We first construct the boundary data $\phi\in C_{loc}^{2+\alpha,1+\frac{\alpha}{2}}(\partial M\times [0,\infty))$. Pick up a positive number $t_1>0$. By Theorem \ref{thm_bddataextension1}, we let $\phi$ satisfy the compatibility condition $(\ref{equn_compatibleequn1-2})$ at $t=0$, $\phi_t\geq 0$ for $t\geq t_1$, $\phi\to \infty$ as $t\to \infty$, and $\phi$ satisfy $(\ref{ineqn_bdvcond1})$ as $t\to\infty$. Since $u_0$ is not necessarily a subsolution of $(\ref{equn_scalarcurvatureequn})$, $\phi_t$ is not necessarily positive at $t=0$. We choose $\phi$ so that $\phi\geq \frac{1}{2} (1+\inf_{M}u_0)$ for $t\in[0,t_1]$. By Lemma \ref{lem_longtimeexistence}, we have that there exists a positive solution $u\in C_{loc}^{2+\alpha,1+\frac{\alpha}{2}}(M\times[0,\infty))$  to the Cauchy--Dirichlet problem $(\ref{equn_scalar20})$. By maximum principle, we have that $u\geq1$ on $M\times[0,\infty)$.

Now we pick up a function $\tilde{\phi}\in C_{loc}^{4+\alpha,2+\frac{\alpha}{2}}(\partial M\times[0,\infty))$ such that $\tilde{\phi}(q,0)=1$, $\tilde{\phi}_t(q,0)=\tilde{\phi}_{tt}(q,0)=0$ on $\partial M$, moreover, $\tilde{\phi}_t\geq0$ and $\tilde{\phi}\leq\phi$ on $\partial M\times[0,\infty)$, and $\tilde{\phi}$ satisfies $(\ref{ineqn_bdvcond1})$ as $t\to\infty$ (for this we can just let $\tilde{\phi}=\phi$ for $t$ large).  By Lemma \ref{lem_generalconformalclassconvergence1}, for the initial data $\tilde{u}_0=1$ and the boundary data $\tilde{\phi}$, there exists a unique positive solution $\tilde{u}\in C_{loc}^{4+\alpha,2+\frac{\alpha}{2}}(M\times[0,\infty))$ to the Cauchy--Dirichlet problem $(\ref{equn_scalar20})$, and $\tilde{u}$ converges to $u_{\infty}$ in $C_{loc}^{4}(M^{\circ})$ as $t\to\infty$, where $u_{\infty}$ is the solution of the Loewner--Nirenberg problem $(\ref{equn_scalarcurvatureequn})-(\ref{equn_blowingupboundarydata})$. By maximum principle, we have that $\tilde{u}\geq 1$ in $M\times[0,+\infty)$.

Now let $\xi=u-\tilde{u}$. Then $\xi$ satisfies
\begin{align*}
&\xi_t=\frac{4(n-1)}{n-2}\Delta_{g} \xi+n(n-1)\zeta\xi,\,\,\,\text{in}\,\,M\times [0,+\infty),\\
&\xi(p,0)=u_0(p)-1>0,\,\,p\in M,\\
&\xi(q,t)=\phi(q,t)-\tilde{\phi}(q,t)\geq0,\,\,q\in\,\partial M,
\end{align*}
where $\zeta=\frac{\big((\tilde{u}-\tilde{u}^{\frac{n+2}{n-2}})-(u-u^{\frac{n+2}{n-2}})\big)}{\tilde{u}-u}$ when $\tilde{u}\neq u$, and $\zeta=1-\frac{n+2}{n-2}u^{\frac{4}{n-2}}$ otherwise, and hence $\zeta<0$. By maximum principle, $ u\geq \tilde{u}$ in $M\times[0,+\infty)$.

The upper bound estimates is the same as the case $R_g\geq 0$. Notice that $u_{\infty}>1$ by maximum principle of the equation $(\ref{equn_normalizedYamabeequation1})$. Let $\eta=u-u_{\infty}$. Then $\eta(q,t)\to -\infty$ as $q\to \partial M$ for any $t>0$. Notice that $\eta$ satisfies the equation
\begin{align*}
\eta_t=\frac{4(n-1)}{n-2}\Delta_{g} \eta+n(n-1)\beta\eta,\,\,\,\text{in}\,\,M\times [0,+\infty),
\end{align*}
where $\beta=\frac{(u-u^{\frac{n+2}{n-2}})-(u_{\infty}-u_{\infty}^{\frac{n+2}{n-2}})}{u-u_{\infty}}$ when $u\neq u_{\infty}$ and $\beta=1-\frac{n+2}{n-2}u^{\frac{4}{n-2}}$ otherwise, and hence, $\beta<0$. Therefore, by the same argument as the case $R_g\geq0$, we have that $\displaystyle\limsup_{t\to+\infty}\sup_{q\in M^{\circ}}\eta(q,t)\leq0$. Therefore, $u(\cdot,t)\to u_{\infty}(\cdot)$ locally uniformly on $M^{\circ}$ as $t\to\infty$. By standard interior estimates for the parabolic equation, we have that $\tilde{u}\to u_{\infty}$ in $C_{loc}^{2}(M^{\circ})$ as $t\to +\infty$.
\end{proof}

\begin{proof}[Proof of Theorem \ref{thm_generaldirectflowdiscussion1}]
By the argument in Appendix \ref{Appendix1}, there exists a positive solution $\bar{u}\in C^{2,\alpha}(M)$ to $(\ref{equn_scalarcurvatureequn})$ such that $v_0<\bar{u}<u_0$ on $M$. Let $\bar{g}=\bar{u}^{\frac{4}{n-2}}g$ and hence $R_{\bar{g}}=-n(n-1)$. Let $\eta=\frac{u_0}{\bar{u}}$. Then by Lemma \ref{lem_genc}, we have that there exists a positive function $\phi\in C_{loc}^{2,\alpha}(\partial M\times [0,+\infty))$ such that $\phi$ satisfies the compatibility condition and growth condition of the flow
\begin{align*}
&u_t=\frac{4(n-1)}{n-2}\Delta_{\bar{g}} u+n(n-1)\big(u-u^{\frac{n+2}{n-2}}\big),\,\,\,\text{in}\,\,M\times [0,+\infty),\\
&u(p,0)=\eta(p),\,\,p\in M,\\
&u(q,t)=\phi(q,t),\,\,q\in\,\partial M,
\end{align*}
and there exists a unique positive solution $u$, which converges to the Loewner--Nirenberg solution $u_{\infty}$ in the conformal class in $C_{loc}^2(M^{\circ})$. That is to say, there exists a direct flow starting from $g_0=u_0^{\frac{4}{n-2}}g$ and converging to the Loewner--Nirenberg metric.  Notice that if the first Dirichlet eigenvalue satisfies $\lambda_1(L_g)\leq0$, by Lemma \ref{lem_Escobar1}, it is equivalent to say that $\displaystyle\inf_{u\in C^1(M),\,u\not\equiv 0\,\text{on}\,\partial M}Q(u)=-\infty$ when $g$ is smooth, where the energy $Q(u)$ is defined in $(\ref{equn_energy123})$.
\end{proof}

\section{Positive scalar curvature metrics on compact manifolds with boundary}\label{section3}

In this section, we present examples for conformal classes on compact manifold with boundary that admit positive scalar curvature metrics. Let $M$ be a compact smooth manifold with boundary $\partial M$. It is well-known that there is not topological obstruction for the existence of positive scalar curvature metrics on $M$. For any given smooth Riemannian metric $g$ on $M$, we can extend $(M,g)$ to a smooth closed manifold $(N,h)$. Then by the well known theorems by Kazdan and Warner, Theorem 3.3 in \cite{KW1} and Theorem 6.2 in \cite{KW2}, we have that there exists a diffeomorphism $F:\,N\to N$ and a smooth positive function $u\in N$, such that the scalar curvature of $u^{\frac{4}{n-2}}F^*h$ is positive on $M$ where $F^*h$ is the pull back metric of $h$ by the map $F$. Notice that $F^*h$ is not necessarily in the conformal class $[g]$ on $M$. Another interesting result obtained by Shi-Wang-Wei (Theorem 1.1 in \cite{ShiWangWei}) recently, answering a question by Gromov, states that, any smooth Riemannian metric $h$ on $\partial M$ can be extended to a smooth Riemannian metric $g$ with positive scalar curvature on $M$.

Now we consider the existence of positive scalar curvature metrics in a conformal class on a compact manifold with boundary.

Let $(M^n,h)$ be a smooth compact Riemannian manifold with boundary $\partial M$ of dimension $n\geq4$. Let $g$ be a complete metric equipped on the interior $M^{\circ}$ such that $x^2g$ extends in $C^{k,\alpha}$ up to the boundary, where $x$ is the distance function to the boundary under the metric $h$, then we call $(M^{\circ}, g)$ conformally compact of class $C^{k,\alpha}$. If moreover, $g$ is Einstein, we call $(M^{\circ}, g)$ conformally compact Einstein  (CCE). In the well known paper \cite{Qing}, Qing shows that for a conformally compact Einstein manifold which has conformal infinity of positive Yamabe constant, there exists a conformal compactification $\bar{g}$ with positive scalar curvature. Definitely, the CCE metric $g$ is the Loewner--Nirenberg metric in the conformal class of $\bar{g}$, and hence the flow here connects these two metrics with the starting metric $\bar{g}$ and end at $g$ as $t\to+\infty$.

To continue, we now introduce a well-known Neumann type boundary value problem on $(M^n,g)$ introduced by Escobar (\cite{Escobar1}, see also \cite{Cherrier}), which is sometimes called the second type Escobar-Yamabe problem:
\begin{align}\label{equn_EY2}
\begin{split}
&-\frac{4(n-1)}{(n-2)}\Delta_g u +R_gu=0,\,\,\text{in}\,\,M,\\
&\frac{4(n-1)}{n-2}\frac{\partial}{\partial n_g}u+2(n-1)H_gu=2(n-1)u^{\frac{n}{n-2}},\,\,\text{on}\,\,\partial M,
\end{split}
\end{align}
where $n_g$ is the unit out-normal vector field on $\partial M$ and $H_g$ is the mean curvature on the boundary. The solution of problem $(\ref{equn_EY2})$ is a critical point of the energy functional:
 \begin{align}\label{equn_energy123}
 Q(u)=\frac{\int_M(|\nabla u|^2+\frac{n-2}{4(n-1)}R_gu^2)dV_g+\frac{n-2}{2}\int_{\partial M}H_g u^2 dS}{\big(\int_{\partial M}|u|^{\frac{2(n-1)}{n-2}}dS\big)^{\frac{n-2}{n-1}}},
 \end{align}
 for any $u\in C^1(M)$, where $H_g$ is the mean curvature of the boundary $\partial M$ on $(M, g)$, $dV_g$ is the volume element of $(M,g)$ and $dS$ is the volume element of $(\partial M, g\big|_{\partial M})$. Notice that the functional does not always have a finite lower bound in a general conformal class, as pointed by Zhiren Jin, see \cite{Escobar3}. For instance, $\displaystyle\inf_{u\in C^1(M),\,u\not\equiv 0\,\text{on}\,\partial M}Q(u)=-\infty$ when $(M,g)$ is obtained by deleting a small geodesic ball on a closed Riemannian manifold with negative scalar curvature. Under the assumption $\displaystyle\inf_{u\in C^1(M),\,u\not\equiv 0\,\text{on}\,\partial M}Q(u)>-\infty$ and the positive mass theorem, the problem has been solved: it was studied in \cite{Escobar3}\cite{Escobar5}\cite{Marques1}\cite{Marques2}\cite{ChenS}\cite{Almaraz} assuming the positive mass theorem or the Weyl tensor $W_g=0$ on $M$, and the remaining cases ($n\geq 6$, $\partial M$ is umbilic, $\displaystyle\inf_{u\in C^1(M),\,u\not\equiv 0\,\text{on}\,\partial M}Q(u)>0$ and the subset on $\partial M$ where the vanishing of the Weyl tensor of $g$ is of certain order is the whole boundary $\partial M$) were recently solved in \cite{Mayer-Ndiaye} without the positive mass theorem.  In particular, for manifolds of dimension $3\leq n\leq 7$ (see \cite{SchoenYau}\cite{SchoenYau1}\cite{SchoenYau2}), or spin manifolds \cite{Witten}, positive mass theorem holds. Recently, Schoen-Yau \cite{SchoenYau0} presented a proof that positive mass theorem holds true in general dimension.  For positive mass theorem and the existence theory of solutions to the Escobar-Yamabe problem, for the regularity of the metric, it is enough that the metric is of class $C^k$ for some sufficiently large $k>0$.

 Recall that by an easy computation, Escobar observed in \cite{Escobar3} the following lemma.
\begin{lem}\label{lem_Escobar1}(Escobar, \cite{Escobar3})
Assume $(M^n,g)$ is a smooth compact Riemannian manifold with boundary. Let $\lambda_1(L_g)$ be the first eigenvalue of the Dirichlet boundary value problem of the conformal Laplacian $L_g=-\frac{4(n-1)}{n-2}\Delta_g+R_g$. Then $\displaystyle\inf_{u\in C^1(M),\,u\not\equiv 0\,\text{on}\,\partial M}Q(u)>-\infty$ if and only if $\lambda_1(L_g)>0$.
\end{lem}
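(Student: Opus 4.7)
The plan is to prove the two implications separately: one direction uses conformal invariance of the Escobar--Yamabe quotient $Q$ together with a positive conformal factor, and the other direction uses a one-parameter family of test functions built from the first Dirichlet eigenfunction.

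For $\lambda_1(L_g)>0\Rightarrow\inf Q>-\infty$, I would first produce $w\in C^2(\bar M)$ with $w>0$ and $L_gw>0$ on $M$. Such a $w$ is obtained by solving the Dirichlet problem $L_gw=1$ with $w|_{\partial M}=1$: coercivity of $L_g$ on $H^1_0(M)$, which is exactly the statement $\lambda_1>0$, gives unique solvability, and a standard comparison argument against the positive first Dirichlet eigenfunction $\phi_1$ forces $w>0$ on all of $\bar M$. Setting $\tilde g=w^{4/(n-2)}g$ and using $R_{\tilde g}=w^{-(n+2)/(n-2)}L_gw$ produces $R_{\tilde g}>0$ on $M$. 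The Escobar--Yamabe quotient is conformally invariant in the sense $Q_g(wv)=Q_{\tilde g}(v)$, so it suffices to bound $Q_{\tilde g}$ from below. In the new gauge the interior part of the numerator of $Q_{\tilde g}(v)$ is non-negative, and
\begin{equation*}
\tfrac{n-2}{2}\int_{\partial M}H_{\tilde g}v^2\,dS_{\tilde g}\geq -C_1\int_{\partial M}v^2\,dS_{\tilde g},\qquad C_1=\tfrac{n-2}{2}\sup_{\partial M}|H_{\tilde g}|.
\end{equation*}
Since $p:=\tfrac{2(n-1)}{n-2}>2$ for $n\geq 3$, H\"older's inequality on $\partial M$ gives
\begin{equation*}
\int_{\partial M}v^2\,dS_{\tilde g}\leq |\partial M|_{\tilde g}^{\,1-2/p}\Bigl(\int_{\partial M}|v|^p\,dS_{\tilde g}\Bigr)^{2/p},
\end{equation*}
which is a constant multiple of the denominator of $Q_{\tilde g}(v)$. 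Thus $Q_{\tilde g}(v)\geq -C$ uniformly in $v$, and therefore $\inf Q_g\geq -C$.

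For the contrapositive $\lambda_1(L_g)\leq 0\Rightarrow\inf Q=-\infty$, let $\phi_1>0$ in $M^\circ$ be a first Dirichlet eigenfunction of $L_g$, with $\phi_1|_{\partial M}=0$ and $L_g\phi_1=\lambda_1\phi_1$. Fix any $\psi\in C^\infty(\bar M)$ with $\psi>0$ on $\partial M$ and test with $u_\eps=\phi_1+\eps\psi$ for small $\eps>0$. Let $B(\cdot,\cdot)$ denote the symmetric bilinear form whose associated quadratic form is the numerator of $Q$. Integration by parts, using $L_g\phi_1=\lambda_1\phi_1$ and $\phi_1|_{\partial M}=0$, gives
\begin{equation*}
B(\phi_1,\phi_1)=\tfrac{n-2}{4(n-1)}\lambda_1\|\phi_1\|_{L^2(M)}^2,\qquad B(\phi_1,\psi)=\tfrac{n-2}{4(n-1)}\lambda_1\int_M\phi_1\psi\,dV_g+\int_{\partial M}\psi\,\tfrac{\partial\phi_1}{\partial n_g}\,dS.
\end{equation*}
Because $u_\eps|_{\partial M}=\eps\psi|_{\partial M}$, the denominator of $Q(u_\eps)$ equals $\eps^2 N(\psi)$ with $N(\psi)=\bigl(\int_{\partial M}|\psi|^p\,dS\bigr)^{(n-2)/(n-1)}>0$, while the numerator is $B(\phi_1,\phi_1)+2\eps B(\phi_1,\psi)+\eps^2 B(\psi,\psi)$. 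If $\lambda_1<0$ then $B(\phi_1,\phi_1)$ is a strictly negative constant, so $Q(u_\eps)\sim B(\phi_1,\phi_1)/(\eps^2 N(\psi))\to-\infty$ as $\eps\to 0^+$. If $\lambda_1=0$ then $B(\phi_1,\phi_1)=0$, but the Hopf boundary-point lemma forces $\partial\phi_1/\partial n_g<0$ on $\partial M$, so $B(\phi_1,\psi)<0$, whence $Q(u_\eps)\sim 2B(\phi_1,\psi)/(\eps\, N(\psi))\to-\infty$.

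The main obstacle is the positivity step in the ``$\Leftarrow$'' direction: because $R_g$ has no a priori sign, the existence and positivity of $w$ with $L_gw>0$ cannot be read directly from the classical weak maximum principle and must be deduced from the spectral hypothesis $\lambda_1>0$, for example by combining coercivity of $L_g$ on $H^1_0(M)$ with a Perron/comparison argument against the positive eigenfunction $\phi_1$. Everything after that is a routine conformal computation plus the H\"older trace estimate above, and the converse direction is a direct test-function expansion.
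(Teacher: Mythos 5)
Your proposal is correct, and its skeleton is the same as the paper's: for $\lambda_1(L_g)>0$ pass to a conformal metric with nonnegative scalar curvature so that only the boundary mean-curvature term can be negative, and control it by H\"older's inequality on $\partial M$; for $\lambda_1(L_g)\leq 0$ drive $Q$ to $-\infty$ with test functions built from the first Dirichlet eigenfunction $\phi_1$. The genuine differences are in two details. First, your positive conformal factor comes from solving $L_gw=1$, $w|_{\partial M}=1$, and you then need the generalized maximum principle for the indefinite operator $L_g$ under $\lambda_1>0$ to get $w>0$; you correctly flag this as the delicate point, and your sketch does close (consider $w/\phi_1$, which tends to $+\infty$ at $\partial M$; at an interior negative minimum one gets $L_gw\leq \lambda_1\phi_1\,(w/\phi_1)<0$, contradicting $L_gw=1$, and strong maximum principle upgrades to strict positivity). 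The paper sidesteps this entirely: it takes $u=2(\xi+C_1\phi_1)$ with $L_g\xi=0$, $\xi|_{\partial M}=1$, which is positive by inspection (continuity near $\partial M$, large $C_1$ on the complementary compact set) and satisfies $L_gu=2C_1\lambda_1\phi_1\geq0$, so no maximum principle for $L_g$ is invoked; your route buys strict positivity $R_{\tilde g}>0$, which is not needed. Second, in the borderline case $\lambda_1=0$ the paper first normalizes $R_g=-n(n-1)$ (via Lemma 1.2) and tests with $\phi_1+\epsilon$, the order-$\epsilon$ negativity coming from the interior cross term $\epsilon\int_M R_g\phi_1$, whereas you stay in the original metric and extract the order-$\epsilon$ negativity from the boundary flux $\int_{\partial M}\psi\,\partial\phi_1/\partial n_g<0$ via the Hopf lemma (valid here despite the sign-indefinite zeroth-order term because $\phi_1$ vanishes on $\partial M$); both give numerator $\sim -c\epsilon$ against denominator $\epsilon^2$, and your version avoids the normalization lemma at the cost of the Hopf-lemma input. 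Finally, you reprove the case $\lambda_1<0$ directly rather than citing Escobar, which is fine and in fact your computation $B(\phi_1,\phi_1)=\frac{n-2}{4(n-1)}\lambda_1\|\phi_1\|_{L^2}^2$ is exactly the expected mechanism.
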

When $R_g\geq 0$, we know that $\lambda_1(L_g)>0$, and hence $\displaystyle\inf_{u\in C^1(M),\,u\not\equiv 0\,\text{on}\,\partial M}Q(u)>-\infty$. We will use a perturbation argument to show that the other direction holds.
\begin{thm}\label{thm_pscalar1}
Let $(M,g)$ be a smooth compact Riemannian manifold with boundary. Then there exists a positive scalar curvature metric in the conformal class of $(M, g)$ if and only if $\displaystyle\inf_{u\in C^1(M),\,u\not\equiv 0\,\text{on}\,\partial M}Q(u)>-\infty$.
\end{thm}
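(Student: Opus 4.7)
By Escobar's Lemma \ref{lem_Escobar1} the hypothesis $\displaystyle\inf_{u\in C^1(M),\,u\not\equiv 0\,\text{on}\,\partial M}Q(u)>-\infty$ is equivalent to the strict positivity $\lambda_1(L_g)>0$ of the first Dirichlet eigenvalue of $L_g=-\frac{4(n-1)}{n-2}\Delta_g+R_g$. My plan is therefore to prove the equivalence
\begin{equation*}
\lambda_1(L_g)>0 \quad\Longleftrightarrow\quad \text{there exists } h\in[g] \text{ with } R_h>0\ \text{on}\ M.
\end{equation*}
The bridge between the two sides is the conformal transformation law $L_g w = R_h\,w^{\frac{n+2}{n-2}}$ for $h=w^{\frac{4}{n-2}}g$ with $w>0$ on $\bar M$, which identifies the right-hand side with the existence of some $w\in C^{2,\alpha}(\bar M)$ satisfying $w>0$ on $\bar M$ and $L_g w>0$ in $M$.

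For the necessity direction (existence of such a $w$ implies $\lambda_1>0$), I would take the first Dirichlet eigenfunction $\varphi_1$, with $\varphi_1>0$ in $M^\circ$, $\varphi_1|_{\partial M}=0$ and $L_g\varphi_1=\lambda_1\varphi_1$. Since $\varphi_1=0$ on $\partial M$, Green's identity yields
\begin{equation*}
\int_M \varphi_1\,L_g w\,dV_g - \lambda_1\int_M w\,\varphi_1\,dV_g = \frac{4(n-1)}{n-2}\int_{\partial M} w\,\partial_\nu \varphi_1\,dS_g,
\end{equation*}
where $\nu$ is the outward unit normal. Hopf's boundary lemma gives $\partial_\nu\varphi_1<0$, so the right-hand side is non-positive, while $\int_M\varphi_1 L_g w>0$ and $\int_M w\varphi_1>0$. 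Rearranging forces $\lambda_1>0$.

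For the sufficiency direction (the ``perturbation argument'' alluded to before the statement), I would use $\lambda_1(L_g)>0$ to solve the linear Dirichlet problem
\begin{equation*}
L_g w=1\ \text{in}\ M, \qquad w|_{\partial M}=1.
\end{equation*}
Writing $w=1+v$ with $v\in H_0^1(M)$, the bilinear form associated with $L_g$ is coercive on $H_0^1(M)$, since the eigenvalue bound $\int_M(\frac{4(n-1)}{n-2}|\nabla\phi|^2+R_g\phi^2)\,dV_g\geq \lambda_1\int_M\phi^2\,dV_g$ combined with the ordinary $H_0^1$-Poincar\'e inequality (via a convex combination) dominates $\|\phi\|_{H^1}^2$ even when $R_g$ has indefinite sign. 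Lax--Milgram then produces a unique weak solution, which Schauder theory promotes to $w\in C^{2,\alpha}(\bar M)$. Testing the equation against the negative part $w^-\in H_0^1(M)$ gives
\begin{equation*}
0\leq\int_M w^-\,dV_g = \int_M w^- L_g w\,dV_g = -\int_M\Bigl(\tfrac{4(n-1)}{n-2}|\nabla w^-|^2+R_g(w^-)^2\Bigr)dV_g\leq -\lambda_1\int_M (w^-)^2\,dV_g,
\end{equation*}
which forces $w^-\equiv 0$, so $w\geq 0$; at any interior zero $p_0\in M^\circ$ one would have $\Delta_g w(p_0)\geq 0$ yet $L_g w(p_0) = -\frac{4(n-1)}{n-2}\Delta_g w(p_0)\leq 0$, contradicting $L_g w=1$. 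Hence $w>0$ on $\bar M$, and $h=w^{\frac{4}{n-2}}g\in[g]$ has $R_h=w^{-\frac{n+2}{n-2}}>0$.

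The main point requiring care is the positivity step in the sufficiency direction: since $R_g$ can be negative somewhere, neither the pointwise maximum principle nor the $w^-$ testing alone suffices; one must use the variational testing (which crucially uses $\lambda_1>0$) to first obtain $w\geq 0$, and only then can the pointwise minimum argument at an interior zero close the estimate.
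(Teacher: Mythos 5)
Your proof is correct, and for the harder (sufficiency) direction it takes a genuinely different route from the paper. The paper offers two proofs: the first invokes the full solution of the second-type Escobar--Yamabe problem $(\ref{equn_EY2})$ (hence the positive mass theorem in certain cases) to produce a scalar-flat conformal metric and then perturbs it to positive scalar curvature via the implicit function theorem; the second, closer in spirit to yours, solves $L_g\xi=0$ with $\xi|_{\partial M}=1$ and adds a large multiple of the first Dirichlet eigenfunction $\phi_1$ to get $u=2(\xi+C_1\phi_1)>1$ with $L_gu=2C_1\lambda_1\phi_1\geq0$, which only yields $R_{g_1}\geq0$ (vanishing on $\partial M$ where $\phi_1=0$), so it still needs the implicit-function-theorem perturbation to reach strict positivity. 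Your argument short-circuits both steps: solving $L_gw=1$, $w|_{\partial M}=1$ (coercivity of the form from $\lambda_1(L_g)>0$, Lax--Milgram, Schauder), obtaining $w\geq0$ by testing against $w^-\in H^1_0(M)$, and excluding interior zeros by the pointwise minimum argument gives $R_h=w^{-\frac{n+2}{n-2}}>0$ in one stroke, with no perturbation or appeal to the Escobar--Yamabe existence theory. Your necessity direction (Green's identity against $\varphi_1$ plus Hopf's lemma) is likewise a self-contained substitute for the paper's appeal to Lemma \ref{lem_Escobar1} together with the conformal invariance of the sign of $\lambda_1$; note it goes through even with only $\partial_\nu\varphi_1\leq0$, since $\int_M\varphi_1L_gw>0$ already forces $\lambda_1\int_Mw\varphi_1>0$. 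Both approaches are valid; yours is more economical and, like the paper's second proof, avoids the positive mass theorem.
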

\begin{proof}
Necessity is a trivial consequence of Lemma \ref{lem_Escobar1}. Now we show the other direction. {\bf Claim.} If there is a metric $h\in [g]$ such that $R_h=0$ on $M$, then there exists a metric $h_1\in [g]$ such that $R_{h_1}>0$ on $M$. This is an application of the implicit function theorem as proof of Theorem 6.11 in \cite{Aubin}.  Consider the Dirichlet boundary value problem
\begin{align*}
&-\frac{4(n-1)}{n-2}\Delta_h u=f u^{\frac{n+2}{n-2}},\,\,\,\text{in}\,M,\\
&u=1,\,\,\text{on}\,\partial M,
\end{align*}
for any given function $f\in C^{\alpha}(M)$. We want to obtain a positive solution to the  problem. Let $\mathcal{C}_0=\{u\in C^{2,\alpha}( M):\,\,u\big|_{\partial M}=0\}$. Define the map $F:\,\mathcal{C}_0\times C^{\alpha}(M)\to C^{\alpha}(M)$ such that
\begin{align*}
F(u, f)=-\frac{4(n-1)}{n-2}\Delta_h (1+u)-f (1+u)^{\frac{n+2}{n-2}}.
\end{align*}
This is a $C^1$ map. We take derivative
\begin{align*}
D_uF(v,f)=-\frac{4(n-1)}{n-2}\Delta_h v- \frac{n+2}{n-2}f (1+u)^{\frac{4}{n-2}}v.
\end{align*}
Recall that $F(0,0)=0$ since $R_h=0$. At $(0,0)$,
\begin{align*}
D_uF(v,0)=-\frac{4(n-1)}{n-2}\Delta_h v
\end{align*}
is invertible. By implicit function theorem, there exists a constant $\epsilon >0$ small such that for any $f\in C^{\alpha}(M)$ with $\|f\|_{C^{\alpha}(M)}\leq\epsilon$, we have that the Dirichlet problem has a unique solution $\tilde{u}=1+u$ in the neighborhood of $1$ in $C^{2,\alpha}(M)$. Take $f$ to be a small positive constant. We have that $\tilde{u}>0$ on $M$. This completes the proof of {\bf Claim}. The solution of Escobar-Yamabe problem $(\ref{equn_EY2})$, where positive mass theorem is used for certain cases, tells that when $\displaystyle\inf_{u\in C^1(M),\,u\not\equiv 0\,\text{on}\,\partial M}Q(u)>-\infty$, there exists a metric in the conformal class with zero scalar curvature, and hence by {\bf Claim}, we have that there exists a positive scalar curvature metric in the conformal class. This completes the proof of the Theorem.

 Now we give a completely different proof of the Theorem, which does not use the solution of the Escobar-Yamabe problem $(\ref{equn_EY2})$, and hence avoids using the positive mass theorem. By the assumption $\displaystyle\inf_{u\in C^1(M),\,u\neq 0\,\text{on}\,\partial M}Q(u)>-\infty$ and Lemma \ref{lem_Escobar1}, we have $\lambda_1(L_g)>0$, and hence $L_g$ is invertible. Let $\phi_1$ be the first eigenfunction with respect to $\lambda_1(L_g)$. Then we can take $\phi_1>0$ in $M^{\circ}$ and $\phi=0$ on $\partial M$. By standard elliptic theory (see Theorem 8.6 in \cite{GT}), there exists a unique solution $\xi\in C^{\infty}(M)$ to the boundary value problem
 \begin{align*}
 &-\frac{2(n-1)}{n-2}\Delta_g\xi+R_g\xi=0,\,\text{in}\,M,\\
 &\xi=1,\,\,\,\,\,\,\,\text{on}\,\partial M.
 \end{align*}
 By continuity, $\xi>\frac{1}{2}$ in a neighborhood of $\partial M$. Therefore, there exists a constant $C_1>0$ such that $u\triangleq 2(\xi + C_1 \phi_1)>1$ on $M$. Let $g_1=u^{\frac{4}{n-2}}g$. Then we have $R_{g_1}\geq0$. By similar perturbation argument as above, we have that there exists a metric $g_2$ in the conformal class such that $R_{g_2}>0$ on $M$.
\end{proof}

Remark on the proof of Lemma \ref{lem_Escobar1}: In \cite{Escobar3}, Escobar showed that when $\lambda_1(L_g)<0$, then $\displaystyle\inf_{u\in C^1(M),\,u\neq 0\,\text{on}\,\partial M}Q(u)=-\infty$. When $\lambda_1(L_g)=0$, similar argument derives that $\displaystyle\inf_{u\in C^1(M),\,u\neq 0\,\text{on}\,\partial M}Q(u)=-\infty$. Indeed, since the boundedness of $\displaystyle\inf_{u\in C^1(M),\,u\neq 0\,\text{on}\,\partial M}Q(u)$ is conformally invariant, we pick up $g$ in the conformal class so that $R_g=-n(n-1)$ (for existence see Lemma \ref{lem_Dirichletbvpscequn1}). Let $\phi_1$ be the first eigenfunction with respect to $\lambda_1(L_g)=0$. Then $\phi_1>0$ in $M^{\circ}$ and $\phi_1=0$ on $\partial M$. For any constant $\epsilon>0$, let $u_{\epsilon}=\phi_1+\epsilon$. It is easy to check that $Q(u_{\epsilon})\to -\infty$ as $\epsilon\searrow0$. When $\lambda_1(L_g)>0$, as the argument in the last paragraph in the proof of Theorem \ref{thm_pscalar1}, there exists $g_1\in[g]$ such that $R_{g_1}\geq0$. Therefore,
for any $u\in C^1(M)$ such that $u\not\equiv 0$ on $\partial M$, we have that
\begin{align*}
Q(u)&=\frac{\int_M(|\nabla u|_{g_1}^2+\frac{n-2}{4(n-1)}R_{g_1}u^2)dV_{g_1}+\frac{n-2}{2}\int_{\partial M}H_{g_1} u^2 dS}{\big(\int_{\partial M}|u|^{\frac{2(n-1)}{n-2}}dS\big)^{\frac{n-2}{n-1}}}\\
&\geq \frac{\frac{n-2}{2}\int_{\partial M}H_{g_1} u^2 dS}{\big(\int_{\partial M}|u|^{\frac{2(n-1)}{n-2}}dS\big)^{\frac{n-2}{n-1}}}\\
&\geq -C,
\end{align*}
where the last inequality is by H$\ddot{\text{o}}$lder's inequality, and $C>0$ depends only on $\sup_{\partial M}|H_{g_1}|$ and volume of $\partial M$ under the restriction of $g_1$. Therefore, $\displaystyle\inf_{u\in C^1(M),\,u\neq 0\,\text{on}\,\partial M}Q(u)>-\infty$. Combining these three cases we have that $\lambda_1(L_g)>0$ iff $\displaystyle\inf_{u\in C^1(M),\,u\neq 0\,\text{on}\,\partial M}Q(u)>-\infty$.

Remark. The perturbation argument in Theorem \ref{thm_pscalar1} and Lemma \ref{lem_Escobar1} show that for a smooth compact Riemannian manifold $(M,g)$, if $\displaystyle\inf_{u\in C^1(M),\,u\not\equiv 0\,\text{on}\,\partial M}Q(u)=-\infty$, then there exists no conformal metric $g_1=u^{\frac{4}{n-2}}g$ with $u\in C^{2,\alpha}(M)$ such that $R_{g_1}=0$. This gives a way to see the known result that  $\displaystyle\inf_{u\in C^1(M),\,u\not\equiv 0\,\text{on}\,\partial M} Q(u) >-\infty$ is a necessary condition for the existence of a solution to the Yamabe problem studied by Escobar in \cite{Escobar2}.

Remark. Let $\lambda_1$ be the first eigenvalue of the problem studied by Escobar in \cite{Escobar2}
  \begin{align*}
\begin{split}
&-\frac{4(n-1)}{(n-2)}\Delta_g u +R_gu=0,\,\,\text{in}\,\,M,\\
&\frac{\partial}{\partial n_g}u+\frac{n-2}{2}H_gu+\lambda_1(B) u=0,\,\,\text{on}\,\,\partial M.
\end{split}
\end{align*}
 It is known that $\lambda_1(B)$ could be $-\infty$, and $\lambda_1(B)>-\infty$ iff $\displaystyle\inf_{u\in C^1(M),\,u\neq 0\,\text{on}\,\partial M}Q(u)>-\infty$, see \cite{Escobar3}. But for the proof of this fact, both the argument using H$\ddot{\text{o}}$lder's inequality directly in \cite{Escobar3} and the argument at the beginning of Proposition 2.1 in \cite{Escobar2} (using Proposition 1.1 and Proposition 1.4) seem to prove the same direction: If $\lambda_1(B)>-\infty$, then $\displaystyle\inf_{u\in C^1(M),\,u\neq 0\,\text{on}\,\partial M}Q(u)>-\infty$. For the other direction, one uses the argument in the above remark on the proof of Lemma \ref{lem_Escobar1} to obtain that there exists $g_1\in[g]$ such that $R_{g_1}\geq0$. Then
 \begin{align*}
 \lambda_1(B)&=\inf_{u\in C^1(M),\,u\neq 0\,\text{on}\,\partial M}\frac{\int_M(|\nabla u|_{g_1}^2+\frac{n-2}{4(n-1)}R_{g_1}u^2)dV_{g_1}+\frac{n-2}{2}\int_{\partial M}H_{g_1} u^2 dS}{\int_{\partial M}u^2dS}\\
 &\geq \inf_{u\in C^1(M),\,u\neq 0\,\text{on}\,\partial M}\frac{\frac{n-2}{2}\int_{\partial M}H_{g_1} u^2 dS}{\int_{\partial M}u^2dS}\\
 &\geq -\frac{(n-2)}{2}\sup_{\partial M}|H_{g_1}|.
 \end{align*}
It is clear that by the remark on the proof of Lemma \ref{lem_Escobar1} and the second approach in the proof of Theorem \ref{thm_pscalar1}, $(M^n,g)$ is only required to be of class $C^{2,\alpha}$ in Lemma \ref{lem_Escobar1} and Theorem \ref{thm_pscalar1}.

\section{The Yamabe flow-- A conformally invariant flow}

We consider the (renormalized) Yamabe flow
\begin{align}\label{equn_Yamabeflow1}
u_t=(n-1)u^{-\frac{4}{n-2}}(\Delta_gu-\frac{n-2}{4(n-1)}(R_gu+n(n-1)u^{\frac{n+2}{n-2}})),
\end{align}
which can also be written as
\begin{align}
(u^{\frac{n+2}{n-2}})_t=\frac{(n-1)(n+2)}{n-2}(\Delta_gu-\frac{n-2}{4(n-1)}(R_gu+n(n-1)u^{\frac{n+2}{n-2}})).
\end{align}
The flow is conformally covariant in the sense that, under the conformal change $g=\varphi^{\frac{4}{n-2}}h$, the above equation $(\ref{equn_Yamabeflow1})$ becomes
\begin{align*}
v_t=(n-1)v^{-\frac{4}{n-2}}(\Delta_hv-\frac{n-2}{4(n-1)}(R_hv+n(n-1)v^{\frac{n+2}{n-2}})),
\end{align*}
where $v=u\varphi$. So without loss of generality, let $g$ be the metric on $M$ in the conformal class such that $R_g=-n(n-1)$. We consider the Cauchy--Dirichlet problem $(\ref{equn_scalarflow2})$ where $u_0\in C^{2k+2,\alpha}(M)$ and $\phi(q,t)\in C_{loc}^{2k+2+\alpha,k+1+\frac{\alpha}{2}}(\partial M\times[0,+\infty))$ with $k=0,\,1$ satisfying the $C^{2+\alpha,1+\frac{\alpha}{2}}$ compatibility condition
 \begin{align}\label{equn_compatibleequn1-2-22}
u_0(p)=\phi(p,0),\,\,\phi(p,0)^{\frac{4}{n-2}}\phi_t(p,0)=(n-1)[\Delta_g u_0(p)+\frac{n(n-2)}{4}(u_0(p)-u_0(p)^{\frac{n+2}{n-2}})],
\end{align}
for $p\in\,\partial M$, and in addition the $C^{4+\alpha,2+\frac{\alpha}{2}}$ compatibility condition
\begin{align}\label{equn_compatibleequn4-2-22}
\phi(p,0)^{\frac{4}{n-2}}\phi_{tt}(p,0)+\frac{4}{n-2}\phi(p,0)^{\frac{6-n}{n-2}}(\phi_t(p,0))^2=(n-1)L(\mu),
\end{align}
where the linear operator $L$ is defined as
\begin{align*}
L(\mu)=\Delta_g\mu +\frac{n(n-2)}{4}(1-\frac{n+2}{n-2}u_0(p)^{\frac{4}{n-2}})\mu,
\end{align*}
and the function $\mu$ is defined as
\begin{align}\label{equn_CVterms}
\mu(p)=(n-1)u_0(p)^{-\frac{4}{n-2}}[\Delta_g u_0(p)+\frac{n(n-2)}{4}(u_0(p)-u_0(p)^{\frac{n+2}{n-2}})].
\end{align}

\begin{lem}\label{lem_flow2existinglongtime}
Assume $(M^n,g)$ is a compact Riemannian manifold with boundary of class $C^{4,\alpha}$ such that $R_g=-n(n-1)$. Let $u_0\in C^{2k+2,\alpha}(M)$ be a positive function, and $\phi\in C_{loc}^{2k+2+\alpha,k+1+\frac{\alpha}{2}}(M\times[0,\infty))$ be a positive function satisfying the compatibility condition $(\ref{equn_compatibleequn1-2-22})$ for $k=0$ and $(\ref{equn_compatibleequn4-2-22})$ in addition for $k=1$. Then there exists a unique positive solution $u\in C_{loc}^{2k+2+\alpha,k+1+\frac{\alpha}{2}}(M\times[0,\infty))$ to the Cauchy--Dirichlet problem $(\ref{equn_scalarflow2})$. Moreover, if $u_0\geq 1$ and $\phi\geq 1$, then $u\geq 1$.
\end{lem}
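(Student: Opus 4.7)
The plan is to establish short-time existence via classical quasilinear parabolic theory, upgrade it to long-time existence using positive two-sided $C^0$-bounds produced by constant super/sub-solutions, and read off the last claim as a byproduct of the same comparison.

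I would first rewrite the flow equation $(\ref{equn_scalarflow2})$ in non-divergence form as
\begin{equation*}
u_t = (n-1)\,u^{-\frac{4}{n-2}}\Bigl(\Delta_g u + \tfrac{n(n-2)}{4}(u - u^{\frac{n+2}{n-2}})\Bigr),
\end{equation*}
which is uniformly parabolic so long as $u$ stays in a compact subinterval of $(0,\infty)$. Since $u_0$ is positive on the compact set $M$ and satisfies the compatibility condition $(\ref{equn_compatibleequn1-2-22})$ (plus $(\ref{equn_compatibleequn4-2-22})$ when $k=1$), classical Schauder theory for quasilinear parabolic equations (e.g.\ Theorem 8.2 in Lieberman, as used in Lemma \ref{lem_longtimeexistence}) yields a unique positive solution $u\in C^{2k+2+\alpha,\,k+1+\alpha/2}(M\times[0,T_0))$ up to some maximal time $T_0>0$.

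The main step is a uniform positive two-sided $C^0$-bound. The key observation is that every constant $C\geq 1$ is a supersolution and every constant $c\in(0,1]$ is a subsolution of $(\ref{equn_scalarflow2})$: substituting $u\equiv C$ gives LHS $=0$ and RHS $=\tfrac{n(n-1)(n+2)}{4}(C-C^{(n+2)/(n-2)})$, which is $\leq 0$ for $C\geq 1$ and $\geq 0$ for $C\leq 1$. Applying the parabolic maximum principle on $M\times[0,T]$ for each $T<T_0$ with
\begin{equation*}
C_+=\max\bigl\{1,\,\sup_M u_0,\,\sup_{\partial M\times[0,T]}\phi\bigr\},\qquad c_-=\min\bigl\{1,\,\inf_M u_0,\,\inf_{\partial M\times[0,T]}\phi\bigr\}
\end{equation*}
gives $c_-\leq u\leq C_+$; in particular $u\geq 1$ whenever $u_0,\phi\geq 1$, which is the final claim of the lemma. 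With this positive two-sided bound the coefficient $u^{-4/(n-2)}$ is uniformly bounded above and below, the equation is uniformly parabolic, and standard Schauder estimates (using the assumed regularity and compatibility of $u_0$ and $\phi$) give $\|u\|_{C^{2k+2+\alpha,\,k+1+\alpha/2}(M\times[0,T])}\leq C(T)$ for every $T<T_0$. The usual continuation argument then upgrades $T_0$ to $+\infty$. Uniqueness follows by the standard linearization: for two solutions $u_1,u_2$, the difference $w=u_1-u_2$ solves, via the mean-value theorem applied to $u\mapsto u^{-4/(n-2)}$ and $u\mapsto u^{(n+2)/(n-2)}$, a linear uniformly parabolic equation with zero Cauchy and Dirichlet data, so $w\equiv 0$ by the maximum principle.

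The hard part, as I see it, is precisely the quasilinear structure: the coefficient $u^{-4/(n-2)}$ degenerates as $u\searrow 0$ and blows up as $u\nearrow\infty$, so both short-time existence and long-time continuation hinge on trapping $u$ inside a compact subinterval of $(0,\infty)$. The constant-function comparison works because $u\equiv 1$ is an equilibrium of the reaction term $\tfrac{n(n-2)}{4}(u-u^{(n+2)/(n-2)})$ and the sign of that term is favorable on each side of $u=1$, which is exactly the phenomenon reflected by the extra ``$1$'' appearing in the bounds $c_-$ and $C_+$.
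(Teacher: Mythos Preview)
Your proposal is correct and follows essentially the same route as the paper: short-time existence via Theorem 8.2 in \cite{Lieberman}, two-sided $C^0$-bounds from the constant barriers $c_-=\min\{1,\inf_M u_0,\inf_{\partial M\times[0,T]}\phi\}$ and $C_+=\max\{1,\sup_M u_0,\sup_{\partial M\times[0,T]}\phi\}$, and continuation by Schauder estimates. The paper phrases the barrier step as a direct contradiction at an interior extremum rather than invoking super/sub-solutions, but the content is identical; your added uniqueness argument by linearization is a harmless elaboration of what the paper leaves implicit.
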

\begin{proof}
Since $u_0\in C^{2k+2,\alpha}(M)$ is positive, the equation $(\ref{equn_Yamabeflow1})$ is uniform parabolic, and by the compatibility condition on $u_0$ and $\phi$ and the classical theory for semilinear parabolic equations (see Theorem 8.2 in \cite{Lieberman}), there exists $T>0$ such that a positive solution $u$ on $M\times[0,T)$ such that $u\in C^{2k+2+\alpha,k+1+\frac{\alpha}{2}}(M\times [0,T_1])$ for any $T_1<T$ and $k=0,\,1$ respectively. Now for any $0<T_1<T$, by maximum principle,
\begin{align*}
u\geq \min\{1,\,\displaystyle\inf_{M}u_0,\,\inf_{M\times[0,T_1])}\phi\}
\end{align*}
on $M\times [0,T_1]$. In fact, if there exists $(q,t)\in M^{\circ}\times(0,T_1]$, such that $0<u(q,t)=\inf_{M\times[0,T_1]}u<1$, then we have $(u^{\frac{n+2}{n-2}})_t(q,t)\leq0,\,\,\Delta_gu(q,t)\geq 0, (u-u^{\frac{n+2}{n-2}})(q,t)>0$, contradicting with the equation. Also, by similar argument, we have that
\begin{align*}
u\leq\max\{1,\,\sup_{M}u_0,\,\sup_{M\times[0,T_1]}\phi\},
\end{align*}
on $M\times [0,T_1]$. Therefore, by the standard a prior $C^{2k+2+\alpha, 1+k+\frac{\alpha}{2}}$ estimates of parabolic equations, the solution $u>0$ can be extended on $M\times[0,+\infty)$ with $u\in C^{2k+2+\alpha,k+1+\frac{\alpha}{2}}(M\times[0,T])$ for any $T>0$.

\end{proof}

\begin{lem}\label{lem_flow2monotonicity}
Let $(M,g)$, $u_0\in C^{4,\alpha}(M)$ and $\phi\in C_{loc}^{4+\alpha,2+\frac{\alpha}{2}}(M\times[0,\infty))$ be as in Lemma \ref{lem_flow2existinglongtime} with the compatibility condition $(\ref{equn_compatibleequn1-2-22})-(\ref{equn_compatibleequn4-2-22})$. In particular, $R_g=-n(n-1)$. Moreover, let $u_0\geq1$ be a subsolution to the equation $(\ref{equn_scalarcurvatureequn})$. And moreover, we assume $u_0\in C^{4,\alpha}(M)$ and
 \begin{align*}
 L(\mu)\geq0
 \end{align*}
 at the points $q\in \partial M$ such that $\mu(q)=0$ where $L$ and $\mu$ are as in $(\ref{equn_compatibleequn4-2-22})$. Let a positive function $\phi\in C_{loc}^{4+\alpha,2+\frac{\alpha}{2}}(M\times[0,\infty))$ satisfy the compatibility condition $(\ref{equn_compatibleequn1-2-22})$ and $(\ref{equn_compatibleequn4-2-22})$. Also we assume $\phi_t\geq 0$ on $\partial M\times(0,+\infty)$. Then the solution $u$ to $(\ref{equn_scalarflow2})$ satisfies $u_t\geq 0$ on $M\times [0,+\infty)$. In particular, $u(\cdot,t)$ is a sub-solution to $(\ref{equn_scalarcurvatureequn})$ for each $t\geq0$.
\end{lem}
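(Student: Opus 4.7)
The plan is to derive a linear parabolic equation for $w := u_t$ and apply the weak maximum principle. By Lemma \ref{lem_flow2existinglongtime}, the hypotheses yield a unique solution $u \in C^{4+\alpha,2+\frac{\alpha}{2}}_{loc}(M\times[0,\infty))$ with $u \geq 1$ (since $u_0 \geq 1$ and $\phi(q,\cdot) \geq \phi(q,0) = u_0(q) \geq 1$ on $\partial M$, using $\phi_t \geq 0$); consequently $w \in C^{2+\alpha,1+\frac{\alpha}{2}}_{loc}$ and $u,w$ are bounded on every slab $M \times [0,T]$.

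Writing the flow as $u_t = (n-1) u^{-\frac{4}{n-2}} F(u)$ with $F(u) := \Delta_g u + \frac{n(n-2)}{4}(u - u^{\frac{n+2}{n-2}})$ and differentiating in $t$, one obtains (using $(n-1) u^{-\frac{n+2}{n-2}} F(u) = u^{-1} w$)
\begin{equation*}
w_t = (n-1) u^{-\frac{4}{n-2}} \Delta_g w + \left[\tfrac{n(n-1)(n-2)}{4} u^{-\frac{4}{n-2}} - \tfrac{n(n-1)(n+2)}{4} - \tfrac{4}{n-2}\, u^{-1} w \right] w.
\end{equation*}
Because $u \geq 1$ and $u,w$ are bounded on $M \times [0,T]$, the bracketed factor is bounded; thus the equation is uniformly parabolic and (after absorbing $u^{-1}w$ into a bounded zero-order coefficient) linear in $w$ with bounded coefficients. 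Its parabolic boundary data are nonnegative: on the initial slice the subsolution hypothesis on $u_0$, combined with $R_g = -n(n-1)$, reads $F(u_0) \geq 0$, hence $w(\cdot,0) = \mu \geq 0$; on the lateral boundary, $w(q,t) = \phi_t(q,t) \geq 0$ by assumption. The $C^{4+\alpha,2+\frac{\alpha}{2}}$ compatibility $(\ref{equn_compatibleequn4-2-22})$ places $w$ in the $C^{2+\alpha,1+\frac{\alpha}{2}}$ class up to the corner $\partial M \times \{0\}$, and the assumption $L(\mu)\geq 0$ at boundary points where $\mu = 0$ is exactly what forces $\phi_{tt}(\cdot,0) \geq 0$ there (via the same compatibility), so that $\phi_t \geq 0$ is consistent with its zero initial value at such points.

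With these preliminaries, one applies the weak maximum principle in the standard way: set $\tilde w := e^{-Kt} w$ with $K$ larger than the supremum on $M \times [0,T]$ of the bracketed coefficient (finite by the previous paragraph); then $\tilde w$ satisfies a linear parabolic equation whose zero-order coefficient is negative, and $\tilde w \geq 0$ on the parabolic boundary of $M \times [0,T]$. The weak maximum principle then forbids a negative interior infimum of $\tilde w$, so $\tilde w \geq 0$ and hence $w \geq 0$ on $M \times [0,T]$. Letting $T \to \infty$ gives $u_t = w \geq 0$ on $M \times [0,\infty)$. Finally, since $u > 0$, the identity $u_t = (n-1) u^{-\frac{4}{n-2}} F(u) \geq 0$ yields $F(u(\cdot,t)) \geq 0$ for every $t$, which is exactly the statement that $u(\cdot,t)$ is a subsolution of $(\ref{equn_scalarcurvatureequn})$.

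The only nontrivial point is the quadratic term $-\tfrac{4}{n-2}\, u^{-1} w^2$ in the equation for $w$, produced by differentiating the conformal prefactor $u^{-\frac{4}{n-2}}$ in time; it prevents the equation from being genuinely linear. The remedy is that the enhanced $C^{4+\alpha,2+\frac{\alpha}{2}}$ regularity supplied by Lemma \ref{lem_flow2existinglongtime} (whose hypotheses are precisely the $C^{4+\alpha,2+\frac{\alpha}{2}}$ compatibility and the $L(\mu) \geq 0$ condition assumed here) forces $w$ to be bounded on finite time slabs, so this term can be absorbed into a bounded zero-order coefficient and the problem reduces to standard linear parabolic theory.
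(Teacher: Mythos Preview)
Your argument is correct and follows the same overall strategy as the paper: differentiate the flow in $t$, observe that $v:=u_t$ has nonnegative initial and lateral data, and apply a maximum principle to the resulting equation. The only substantive difference is in how the quadratic term is handled. The paper differentiates the equation in the form $(u^{\frac{n+2}{n-2}})_t = \cdots$ to obtain
\[
\tfrac{n+2}{n-2}u^{\frac{4}{n-2}}v_t+\tfrac{4(n+2)}{(n-2)^2}u^{\frac{6-n}{n-2}}v^2=\tfrac{(n-1)(n+2)}{n-2}\Bigl(\Delta_gv+\tfrac{n(n-2)}{4}\bigl(1-\tfrac{n+2}{n-2}u^{\frac{4}{n-2}}\bigr)v\Bigr),
\]
and then argues directly by continuity: if $v$ ever becomes negative, one looks at a time just after the first moment this happens, where the negative infimum is arbitrarily small in absolute value; since $u\ge 1$ forces the linear zero-order coefficient to have the favorable sign, the linear term dominates the quadratic one and a contradiction ensues. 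Your route instead freezes one factor of $w$ in the quadratic term and absorbs it into a bounded zero-order coefficient (legitimate because the $C^{4+\alpha,2+\frac{\alpha}{2}}$ regularity bounds $w$ on each slab), then invokes the exponential-weight maximum principle. Both are valid; the paper's version is slightly more self-contained in that it uses only continuity of $v$ and the sign structure coming from $u\ge 1$, whereas yours trades that for the standard linear-parabolic toolkit at the cost of invoking the a~priori bound on $w$.
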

\begin{proof}
Let $v=u_t$, condition on $u_0$ implies $v(q,0)\geq0$. By assumption, we have $\phi\geq 1$ on $\partial M\times[0,+\infty)$ and hence, by Lemma \ref{lem_flow2existinglongtime}, $u\geq 1$ in $M\times[0,\infty)$. Take derivative of $t$ on both sides of the equation in $(\ref{equn_scalarflow2})$, and we have
\begin{align}\label{equn_flowspeedequationincreasing2-2}
\frac{n+2}{n-2}u^{\frac{4}{n-2}}v_t+\frac{4(n+2)}{(n-2)^2}u^{\frac{6-n}{n-2}}v^2=\frac{(n-1)(n+2)}{n-2}(\Delta_gv+\frac{n(n-2)}{4}(1-\frac{n+2}{n-2}u^{\frac{4}{n-2}})v).
\end{align}
By maximum principle, $v$ can not obtain a negative minimum on $M\times[0,T]$ at a point $(q,t)\in M^{\circ}\times(0,T]$, for any $T>0$. Indeed, if otherwise, since $v\geq0$ on $M\times\{0\}\,\bigcup\,\partial M\times[0,\infty)$ and recall that $u\geq1$, by continuity of $v$, there exists $t_1>0$ and $p\in M^{\circ}$ such that
\begin{align*}
v(p,t_1)=\inf_{M\times[0,t_1]}v<0,
\end{align*}
and $|v(p,t_1)|$ is so small that at the point $(p,t_1)$,
\begin{align*}
-\frac{4(n+2)}{(n-2)^2}u^{\frac{6-n}{n-2}}v^2+\frac{n(n-1)(n+2)}{4}(1-\frac{n+2}{n-2}u^{\frac{4}{n-2}})v>0,
\end{align*}
contradicting with the equation $(\ref{equn_flowspeedequationincreasing2-2})$ and the fact $v_t(p,t_1)\leq0$ and $\Delta_gv(p,t_1)\geq0$. Therefore, $v\geq0$ in $M\times[0,+\infty)$.
\end{proof}

\begin{lem}\label{lem_flow2boundarylowerbound}
Let $(M,g)$, $u_0$ and $\phi$ be as in Lemma \ref{lem_flow2monotonicity}. Moreover, assume $\phi$ satisfies that there exists a constant $\beta>0$ such that
\begin{align}\label{inequn_boundarylowerboundcond2-1}
\phi^{-1}\phi_t\leq \beta
\end{align}
on $\partial M\times[0,\infty)$, and
 \begin{align}\label{inequn_boundarylowerboundcond2-2}
 \begin{split}
 &\phi^{\frac{n-1}{2-n}}|\nabla_{g}\phi|\to 0,\\
&\phi^{\frac{n}{2-n}}|\nabla_{g}^2\phi|\to0,\,\,\text{uniformly on}\,\,\,\partial M, \,\,\text{as}\,\,t\to+\infty.
\end{split}
 \end{align}
Let $x$ be the distance function to the boundary on $(M,g)$. Let $U=\{0\leq x \leq x_1\}$.  Then there exist constants $C>0$, $x_1>0$ small and $t_1>0$ large such that the solution $u>0$ to $(\ref{equn_scalarflow2})$ satisfies
\begin{align*}
u(q,t)\geq C(x+\phi^{\frac{2}{2-n}})^{\frac{2-n}{2}}-C
\end{align*}
on $U\times [t_1,+\infty)$.
\end{lem}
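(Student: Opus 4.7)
The plan is to parallel the barrier construction of Lemma \ref{lem_asymlowbdnearbd1}, modified to handle the quasilinear structure of the Yamabe flow. By the conformal covariance of the Yamabe flow together with Lemma 3.1 of \cite{ACF}, write $g = f^{\frac{4}{n-2}}h$ for some positive $f \in C^{4,\alpha}(M)$ with $R_h > 0$ in a neighborhood $V$ of $\partial M$. Then $\tilde{u} = fu$ satisfies the Yamabe flow in the conformal class of $h$, namely
\begin{align*}
(\tilde{u}^{\frac{n+2}{n-2}})_t = \tfrac{(n-1)(n+2)}{n-2}\bigl(\Delta_h \tilde{u} - \tfrac{n-2}{4(n-1)}R_h\tilde{u} - \tfrac{n(n-2)}{4}\tilde{u}^{\frac{n+2}{n-2}}\bigr),
\end{align*}
with $\tilde{u}|_{\partial M} = f\phi$. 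Let $x$ be the $h$-distance to $\partial M$, take $x_1 > 0$ small enough that $U = \{0 \le x \le x_1\} \subseteq V$, and define, in parallel with $(\ref{equn_barrierfunctionlowerbound1})$,
\begin{align*}
\varphi(\text{Exp}_q^h(x), t) = c\bigl[(x + (f(q)\phi(q,t))^{\frac{2}{2-n}})^{\frac{2-n}{2}} - (x_1 + (f(q)\phi(q,t))^{\frac{2}{2-n}})^{\frac{2-n}{2}}\bigr]
\end{align*}
for a small constant $c > 0$ to be chosen.

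The main step is to verify that $\varphi$ is a subsolution of the above Yamabe flow on $U \times [t_1, \infty)$ for $t_1$ sufficiently large. Write $A = (f\phi)^{\frac{2}{2-n}}$. The computations of $(\ref{equn_lowerboundnearboundary})$ still apply and give the dominant elliptic contribution
\begin{align*}
\tfrac{(n-1)(n+2)}{n-2}\Delta_h\varphi - \tfrac{n(n-1)(n+2)}{4}\varphi^{\frac{n+2}{n-2}} = \tfrac{n(n-1)(n+2)}{4}(c - c^{\frac{n+2}{n-2}})(x+A)^{-\frac{n+2}{2}}(1+o(1)),
\end{align*}
which is positive for small $c$. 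Using the chain rule $(\varphi^{\frac{n+2}{n-2}})_t = \tfrac{n+2}{n-2}\varphi^{\frac{4}{n-2}}\varphi_t$ together with the identity $A_t = -\tfrac{2}{n-2}A\phi^{-1}\phi_t$, one obtains
\begin{align*}
(\varphi^{\frac{n+2}{n-2}})_t \le \tfrac{n+2}{n-2}\beta\,c^{\frac{n+2}{n-2}}(x+A)^{-\frac{n+2}{2}}(1+o(1)),
\end{align*}
courtesy of $(\ref{inequn_boundarylowerboundcond2-1})$ and the elementary bound $A/(x+A)\le 1$. The residual terms involving $R_h\varphi$, the mean curvature $H_x$, and tangential derivatives $|\nabla_{h_x}\phi|$, $|\nabla_{h_x}^2\phi|$ are all $o((x+A)^{-(n+2)/2})$ uniformly as $t \to \infty$ by condition $(\ref{inequn_boundarylowerboundcond2-2})$. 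Choosing $c$ so small that $c^{\frac{4}{n-2}}\beta$ is dominated by the main coefficient, and $t_1$ so large that the $o(1)$-errors are negligible, makes $\varphi$ a subsolution.

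For the comparison step, on $\{x=0\}\times[t_1,\infty)$ one has $\varphi \le cf\phi \le f\phi = \tilde{u}$ provided $c \le 1$; on $\{x=x_1\}\times[t_1,\infty)$, $\varphi = 0 \le \tilde{u}$; and on $U\times\{t_1\}$, $\tilde{u}(\cdot,t_1)$ is continuous and bounded below by a positive constant (by Lemma \ref{lem_flow2existinglongtime}, noting $\tilde{u} = fu \ge f$ since $u\ge 1$), so shrinking $c$ further if necessary yields $\varphi \le \tilde{u}$ there. Since $\tilde{u}$ and $\varphi$ are bounded between positive constants on $U\times[t_1,T]$ for each finite $T$, the Yamabe flow equation is uniformly parabolic there, and the standard comparison principle, applied for instance to $w = \tilde{u}^{\frac{n+2}{n-2}} - \varphi^{\frac{n+2}{n-2}}$, yields $\tilde{u}\ge\varphi$ on $U\times[t_1,\infty)$. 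Passing back via $u = f^{-1}\tilde{u}$ and absorbing constants (using that $f$ is bounded above and below on $U$ and $(x_1+A)^{\frac{2-n}{2}} \le x_1^{\frac{2-n}{2}}$) produces the stated lower bound.

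The main obstacle is the subsolution verification: the Yamabe flow's damping factor $\varphi^{\frac{4}{n-2}}$ on the left-hand side of the $(\tilde{u}^{\frac{n+2}{n-2}})_t = \cdots$ form is precisely what lets the weaker hypothesis $\phi^{-1}\phi_t \le \beta$ (a uniform bound) suffice, in contrast to the vanishing condition $\phi^{\frac{n}{2-n}}\phi_t \to 0$ needed for the direct flow in Lemma \ref{lem_asymlowbdnearbd1}. Carrying out the bookkeeping of powers of $c$ and $(x+A)$ cleanly, and ensuring that a single $c = c(\beta)$ works uniformly in $t$ so that the leading $c$-coefficient beats the time-derivative $c^{\frac{n+2}{n-2}}\beta$-contribution, constitutes the technical heart of the argument.
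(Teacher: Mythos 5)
Your proposal is correct and follows the same barrier strategy as the paper: the same function $\varphi$ from $(\ref{equn_barrierfunctionlowerbound1})$, the same key estimate $\varphi^{\frac{4}{n-2}}\varphi_t\leq c^{\frac{n+2}{n-2}}\beta(x+A)^{-\frac{n+2}{2}}$ (so that the $O(c)$ Laplacian term beats the $O(c^{\frac{n+2}{n-2}})$ time-derivative and reaction terms for small $c$), and a maximum-principle comparison on $U\times[t_1,\infty)$. You correctly identify that the quasilinear damping factor is exactly what allows the weaker hypothesis $\phi^{-1}\phi_t\leq\beta$ here, versus $\phi^{\frac{n}{2-n}}\phi_t\to0$ for the direct flow.

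The one genuine divergence is in how the zeroth-order term is signed in the comparison step. You conformally change to $h$ with $R_h>0$ near $\partial M$ (as in Lemma \ref{lem_asymlowbdnearbd1}) and let the positivity of $R_h$ kill the linear term at a would-be negative minimum of $\tilde u-\varphi$. The paper instead takes $f\equiv1$ (no conformal change, since $R_g=-n(n-1)$ is already normalized) and exploits the hypotheses $u_0\geq1$, $\phi\geq1$, which give $u\geq1$ by Lemma \ref{lem_flow2existinglongtime}: at an interior negative minimum of $v=u-\varphi$ one has $\varphi>u\geq1$, and since $s\mapsto s-s^{\frac{n+2}{n-2}}$ is strictly decreasing on $[1,\infty)$, the term $u-u^{\frac{n+2}{n-2}}-\varphi+\varphi^{\frac{n+2}{n-2}}$ has the favorable sign. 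Your route is slightly more robust (it would not need $u\geq1$), at the cost of transporting conditions $(\ref{inequn_boundarylowerboundcond2-1})$--$(\ref{inequn_boundarylowerboundcond2-2})$ through the factor $f$; the paper's is shorter. One small caution in your comparison step: run the minimum-point argument on $v=\tilde u-\varphi$ rather than on $\tilde u^{\frac{n+2}{n-2}}-\varphi^{\frac{n+2}{n-2}}$ (at a minimum of the latter the sign of $\Delta_h(\tilde u-\varphi)$ is not immediate), writing the time-derivative difference as $\frac{n+2}{n-2}\tilde u^{\frac{4}{n-2}}v_t+\frac{n+2}{n-2}(\tilde u^{\frac{4}{n-2}}-\varphi^{\frac{4}{n-2}})\varphi_t$ and using $\varphi_t\geq0$, which follows from $\phi_t\geq0$ and $(x+A)^{-n/2}\geq(x_1+A)^{-n/2}$.
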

\noindent For instance, take $\phi(t)=e^{t},\,t^2e^{t},\,t,$ or any other monotone function of polynomial growth for $t$ large.
\begin{proof}
The proof is similar to Lemma \ref{lem_asymlowbdnearbd1}. Let $f\equiv1$ and $x_1>0$ small be as in \ref{lem_asymlowbdnearbd1} such that the exponential map $F=\text{Exp}:\,\partial M\times[0,x_1]\to\,U$ is a diffeomorphism, and let the barrier function $\varphi\in C_{loc}^{2,\alpha}(U\times[0,\infty))$ be defined in $(\ref{equn_barrierfunctionlowerbound1})$. For $\varphi$ we have the estimates $(\ref{equn_lowerboundnearboundary})$. In particular,
 \begin{align*}
 \varphi^{\frac{4}{n-2}}\varphi_t&=\varphi^{\frac{4}{n-2}}cf^{\frac{2}{n-2}}[(x+(f(q)\phi(q,t))^{\frac{2}{2-n}})^{\frac{-n}{2}}-(x_1+(f(q)\phi(q,t))^{\frac{2}{2-n}})^{\frac{-n}{2}}]\phi^{\frac{n}{2-n}}\,\frac{\partial\phi}{\partial t}\\
 &\leq c^{\frac{n+2}{n-2}}(x+\phi(q,t)^{\frac{2}{2-n}})^{-\frac{n+4}{2}}\phi^{\frac{n}{2-n}}\frac{\partial\phi}{\partial t}\\
 &\leq c^{\frac{n+2}{n-2}}(x+\phi(q,t)^{\frac{2}{2-n}})^{-\frac{n+2}{2}}\phi^{-1}\frac{\partial\phi}{\partial t},
 \end{align*}
 and hence by $(\ref{equn_maintermtestbdr})$ and our condition on $\phi$, for the constants $c>0$ and $x_1$ small enough and $t_1>0$ large enough, we have
 \begin{align*}
 (\varphi^{\frac{n+2}{n-2}})_t\leq\frac{(n-1)(n+2)}{n-2}(\Delta_g\varphi+\frac{n(n-2)}{4}(\varphi-\varphi^{\frac{n+2}{n-2}}))
 \end{align*}
on $U\times[t_1,\infty)$. Now we take $c>0$ small enough so that $u>\varphi$ on $U\times\{t_1\}$. By the definition of $\varphi$, we also know that $u>\varphi$ on $\partial U\times [0,+\infty)$. Let $v=u-\varphi$. Take difference of this inequality and the equation satisfied by $u$ in $(\ref{equn_scalarflow2})$, we have
  \begin{align*}
  u^{\frac{4}{n-2}}v_t+(u^{\frac{4}{n-2}}-\varphi^{\frac{4}{n-2}})\varphi_t\geq \frac{(n-1)(n+2)}{n-2}(\Delta_gv+\frac{n(n-2)}{4}(u-u^{\frac{n+2}{n-2}}-\varphi+\varphi^{\frac{n+2}{n-2}})),
  \end{align*}
on $U\times[t_1,\infty)$. Assume that there exists $(q,t)\in U^{\circ}\times(t_1,+\infty)$ such that $v(q,t)=\displaystyle\inf_{M\times[t_1,t]}v<0$, and then since $u\geq 1$, we have $\varphi(q,t)>1$ and hence at the point $(q,t)$, we have
 \begin{align*}
 &u^{\frac{4}{n-2}}v_t\leq0,\,\,(u^{\frac{4}{n-2}}-\varphi^{\frac{4}{n-2}})\varphi_t\leq 0,\\
 &\Delta_gv\geq0,\,\,\,u-u^{\frac{n+2}{n-2}}-\varphi+\varphi^{\frac{n+2}{n-2}}>0,
 \end{align*}
where $\varphi_t\geq0$ by the assumption $\phi_t\geq0$, contradicting with the above inequality. Therefore, $v\geq 0$ on $U\times[t_1,+\infty)$. This completes the proof of the lemma.

\end{proof}

\begin{proof}[Proof of Theorem \ref{thm_YamabeflowC4settingconvt1}]
By Lemma \ref{lem_flow2existinglongtime} and Lemma \ref{lem_flow2monotonicity}, we have that there exists a positive solution $u$ on $M\times[0,+\infty)$ with $u\in C^{4+\alpha,2+\frac{\alpha}{2}}(M\times[0,T])$ for any $T>0$, and $u_t\geq0$, i.e., $u(\cdot,t)$ is a sub-solution to $(\ref{equn_scalarcurvatureequn})$ for any $t\geq0$ and also, $u\geq1$. By maximum principle, $u(q,t)\leq u_{LN}(q)$ for any $(q,t)\in M^{\circ}\times[0,+\infty)$, where $u_{LN}$ is the solution to the Loewner--Nirenberg problem  $(\ref{equn_scalarcurvatureequn})-(\ref{equn_blowingupboundarydata})$. Alternatively, one can use the local super-solution constructed in Lemma 5.2 in \cite{GSW} to give the local upper bound estimates of $u$ in $M^{\circ}$. Indeed, let $x=x(q)$ be the distance function of $q\in M^{\circ}$ to $\partial M$. There exists $x_1>0$ small such that for $q\in\{0<x(q)\leq x_1\}$, the injectivity radius $i(q)$ at $q$ is larger than $\frac{x(q)}{2}$. We then define a function $\bar{u}$ on $B_{R}(q)$ by
\begin{align*}
\bar{u}(p)=\left(\frac{2R}{R^2-r(p)^2}\right)^{\frac{n-2}{2}}e^{\frac{n-2}{2}(\sqrt{R^2-r(p)^2+\epsilon^2}-\epsilon)}
\end{align*}
for $p\in B_R(q)$ where $R=\frac{x(q)}{2}$, $\epsilon>0$ is some small constant and $r(p)$ is the distance function from $p$ to $q$, and hence $\bar{u}\in C^{2}(B_R(q))$ and $\bar{u}=\infty$ on $\partial B_{R}(q)$. In fact $x_1$ and $\epsilon$ are chosen small enough as in \cite{GSW} so that $\bar{u}$ is a super-solution to $(\ref{equn_scalarcurvatureequn})$ on $B_R(q)$. By maximum principle, $u\leq \bar{u}$ on $B_R(q)$, and hence there exists a uniform constant $C>0$, such that
\begin{align*}
u(q,t)\leq Cx(q)^{\frac{2-n}{2}}
\end{align*}
on $\{0<x(q)\leq x_1\}$ for $t\geq0$.  For any $q\in M-\{0\leq x\leq x_1\}$, taking $R<\min\{\frac{i(q)}{2},x_1\}$ and $\epsilon>0$ small so that $\bar{u}$ is a super-solution  to $(\ref{equn_scalarcurvatureequn})$ on $B_R(q)$, and hence we have that there exists a uniform constant $C>0$ depending on $B_R(q)$ such that $u(p,t)\leq C$ for $p\in B_{\frac{R}{2}}(q)$ and $t\geq0$. Now by standard interior Schauder estimates of parabolic equations, we have that for any compact subset $F\subseteq M^{\circ}$, there exists a uniform constant $C=C(F)>0$ such that
\begin{align*}
\|u\|_{C^{4+\alpha,2+\frac{\alpha}{2}}(F\times[T,T+1])}\leq C
 \end{align*}
 for any $T\geq0$. Since $u$ is locally uniformly bounded from above in $M^{\circ}$ and $u_t\geq0$ on $M\times[0,\infty)$, by Harnack inequality with respect to the equation $(\ref{equn_flowspeedequationincreasing2-2})$ satisfied by $u_t$, we have that $u$ converges locally uniformly in $M^{\circ}$ to a positive function $u_{\infty}$ on $M^{\circ}$ as $t\to+\infty$. By interior Schauder estimates of the uniform parabolic equation in $(\ref{equn_scalarflow2})$, we have that $u\to u_{\infty}$ in $C_{loc}^4(M^{\circ})$, and hence $u_{\infty}$ is a solution to $(\ref{equn_scalarcurvatureequn})$ in $M^{\circ}$.  By the lower bound estimates near the boundary in Lemma \ref{lem_flow2boundarylowerbound} and the above upper bound estimates, there exists a constant $C>0$ such that
 \begin{align*}
 C x^{\frac{2-n}{2}}\geq u_{\infty}\geq \frac{1}{C} x^{\frac{2-n}{2}}
  \end{align*}
  in $\{0<x\leq x_1\}$  for some constant $x_1>0$, where $x$ is the distance function to the boundary on $(M,g)$. Therefore, by uniqueness of the solution to  $(\ref{equn_scalarcurvatureequn})-(\ref{equn_blowingupboundarydata})$, $u_{\infty}=u_{LN}$. This completes the proof of the theorem.

\end{proof}

\begin{appendix}
\section{}\label{Appendix1}

In this section, we show that the homogeneous Dirichlet boundary value problem
\begin{align}
&\label{equn_scalarcurvatureequn2}\frac{4(n-1)}{n-2}\Delta u-R_gu-n(n-1)u^{\frac{n+2}{n-2}}=0,\,\,\,\text{in}\,\,M,\\
&\label{equn_Dirichletbvdata1}u(p)=0,\,\,\,\text{for}\,\,p\in\partial M.
\end{align}
admits a nontrivial solution $u\geq0$ in $M$ when the conformal Laplacian $L=-\frac{4(n-1)}{n-2}\Delta_g +R_g$ has a negative eigenvalue $\lambda_1(L_g)<0$ for the Dirichlet boundary value problem.

Let $(M,g)$ be a compact Riemannian manifold of class $C^{k+2,\alpha}$ with boundary with $k\geq0$. For any given positive function $\varphi_0\in C^{k+2,\alpha}(\partial M)$, by the classical variational method (see Lemma \ref{lem_Dirichletbvpscequn1}), there exists a unique positive solution $u_0\in C^{k+2,\alpha}(M)$ to the Dirichlet boundary value problem
\begin{align}\label{equn_scalar2}
\begin{split}
&\frac{4(n-1)}{n-2}\Delta_g w-R_gw-n(n-1)w^{\frac{n+2}{n-2}}=0,\\
&w\big|_{\partial M}=\varphi_0.
\end{split}
\end{align}
{\bf Claim 1}. If $u_1$ and $u_2$ are the corresponding solutions to $(\ref{equn_scalar2})$ with respect to $\varphi_0=\varphi_1$ and $\varphi_0=\varphi_2$ for two positive functions $\varphi_1\leq\varphi_2$ on $\partial M$, then $u_1\leq u_2$. We now use maximum principle to prove the claim. Let $g_2=u_2^{\frac{4}{n-2}}g$, $\phi=\frac{\varphi_1}{\varphi_2}$ and $v=\frac{u_1}{u_2}$. Then  $v$ satisfies
\begin{align}\label{equn_normalizedYamabeequation1}
&\frac{4(n-1)}{n-2}\Delta_{g_2} v+n(n-1)(v-v^{\frac{n+2}{n-2}})=0,\\
&v\big|_{\partial M}=\phi\leq1. \nonumber
\end{align}
Then by maximum principle, $v$ could not obtain its maximum point with $\displaystyle\sup_{M}v>1$ in $M^{\circ}$, and hence {\bf Claim 1} is proved.

By Lemma \ref{lem_Dirichletbvpscequn1}, we take the metric $g$ such that $R_g=-n(n-1)$ in the conformal class as the background metric. Let $v_j>0$ be the solution to the Dirichlet boundary value problem $(\ref{equn_normalizedYamabeequation1})$ with $g_2=g$ and with the boundary data $\phi=\frac{1}{j}$. Then by Claim 1, $\{v_j\}_{j=1}^{\infty}$ is decreasing as $j$ increases. By standard elliptic estimates we have $\{v_j\}_j$ converges in $C^2(M)$ to a non-negative function $v_0$, i.e., $v_0=\displaystyle\lim_{j\to\infty}v_j$. Then $v_0$ is a non-negative solution to the problem
\begin{align}\label{equn_homogeneousbvp1-1}
\begin{split}
&\frac{4(n-1)}{n-2}\Delta_g v_0+n(n-1)v_0-n(n-1)|v_0|^{\frac{4}{n-2}}v_0=0,\\
&v_0\big|_{\partial M}=0.
\end{split}
\end{align}

We want to show that the limit $v_0$ of $\{v_j\}_j$ is not zero when the first eigenvalue $\lambda_1(L_g)<0$ for the Dirichlet problem of the conformal Laplacian $L_g=-(\frac{4(n-1)}{n-2}\Delta_g +n(n-1))$. For the case $\lambda_1(L_g)<0$, let $\phi_1$ be the first eigenfunction with $1>\phi_1>0$ in $M^{\circ}$. Recall that the minimizer of the energy
\begin{align}\label{equn_energy1-1}
E(u)=\frac{2(n-1)}{n-2}\int_M|\nabla u|_{g}^2+\int_M\frac{n(n-1)}{2}(-u^2+\frac{n-2}{n}|u|^{\frac{2n}{n-2}}),
\end{align}
in the function space
\begin{align}\label{equn_functionspacesetbvp1}
S=\{u\in W^{1,2}(M)\,\big|\,u-\varphi_0\in W_0^{1,2}(M)\}
 \end{align}
 is the unique solution to $(\ref{equn_scalar2})$ when $\varphi_0>0$. Here for the homogeneous Dirichlet problem $(\ref{equn_homogeneousbvp1-1})$, just take $\varphi_0=0$. Then let $\epsilon>0$ be small enough, we have that
\begin{align*}
E(\epsilon \phi_1)&=\frac{2(n-1)}{n-2}\epsilon^2\int_M|\nabla \phi_1|_{g}^2-\frac{n(n-2)}{4}\phi_1^2 dV_{g}+\frac{(n-2)(n-1)}{2}\epsilon^{\frac{2n}{n-2}}\int_M\phi_1^{\frac{2n}{n-2}}dV_{g}\\
&=\epsilon^2[\frac{2(n-1)}{n-2}\int_M|\nabla \phi_1|_{g}^2-\frac{n(n-2)}{4}\phi_1^2 dV_{g}+\frac{(n-2)(n-1)}{2}\epsilon^{\frac{4}{n-2}}\int_M\phi_1^{\frac{2n}{n-2}}dV_{g}].
\end{align*}
Since $\int_M|\nabla \phi_1|_{g}^2-\frac{n(n-2)}{4}\phi_1^2 dV_{g}<0$, there exists $\epsilon>0$, such that $E(\epsilon \phi_1)<0$. Therefore, \begin{align*}
-m\equiv \inf_{v\in W_0^{1,2}(M)}E(v)<0.
\end{align*}
Let $\{u_j\}_{j=0}^{\infty}$ be a minimizing sequence of $E$ on the function space $S$ defined in $(\ref{equn_functionspacesetbvp1})$. Then for $j$ large,
\begin{align*}
\frac{-m}{2}\geq E(u_j)=\frac{2(n-1)}{n-2}\int_M|\nabla u_j|_{g}^2+\int_M\frac{n(n-1)}{2}(-u_j^2+\frac{n-2}{n}|u_j|^{\frac{2n}{n-2}}),
\end{align*}
and hence,
\begin{align*}
\int_M\frac{n(n-1)}{2}u_j^2 \geq \frac{m}{2}+\frac{2(n-1)}{n-2}\int_M|\nabla u_j|_{g}^2+\int_M\frac{(n-2)(n-1)}{2}|u_j|^{\frac{2n}{n-2}}\geq \frac{m}{2}>0.
\end{align*}
Since $u_j\rightharpoonup \bar{v}$ weakly in $W^{1,2}(M)$ sense, by the Sobolev embedding theorem, we have that $u_j\to \bar{v}$ in $L^2(M)$ up to a subsequence. Therefore, $\|\bar{v}\|_{L^2(M)}\geq \sqrt{\frac{m}{2}}>0$, and it is a weak solution to the problem. It is clear that $|\bar{v}|$ is also a minimizer of $E$. By the regularity argument in Appendix B in \cite{Struwe}, $|\bar{v}|\in C^2(M)$. By Harnack inequality, we have that $|\bar{v}|>0$ in $M^{\circ}$ since $\bar{v}$ is not zero, and hence $\bar{v}>0$ in $M^{\circ}$. Therefore, the homogeneous Dirichlet boundary value problem has a non-zero solution $\bar{v}\in C^2(M)\bigcap W_0^{1,2}(M)$ with $\bar{v}>0$ in $M^{\circ}$. By maximum principle, $\bar{v}\leq 1$ in $M$.

On the other hand, by Claim 1, we have $v_0=\lim_{j\to\infty}v_j\geq \bar{v}$ on $M$, and hence $v_0>0$ in $M^{\circ}$. In particular, $v_0$ is the largest solution to $(\ref{equn_homogeneousbvp1-1})$.

In summary, for a general compact Riemannian manifold $(M,g)$ of class $C^{k+2,\alpha}$ with boundary such that $\lambda_1(L_g)<0$, where $\lambda_1(L_g)$ is the first eigenvalue of the conformal Laplacian $L_g$ of the Dirichlet boundary value problem, there exists a largest solution $v_0$ to $(\ref{equn_scalarcurvatureequn2})-(\ref{equn_Dirichletbvdata1})$ such that $v_0>0$ in $M^{\circ}$.

By the convergence of $v_j$ to $v_0$, for any continuous function $u_0>v_0$ on $M$, there exists $j>0$ such that $u_0>v_j$ on $M$.

On a compact manifold $(M,g)$ with boundary of class $C^{2,\alpha}$, if $\lambda_1(L_g)>0$, then by Lemma \ref{lem_Escobar1} and Theorem \ref{thm_pscalar1}, there exists a conformal metric $h\in[g]$ such that $R_h\geq0$. By maximum principle, $v_0=0$ in this case. We do not know if $v_0$ vanishes when $\lambda_1(L_g)=0$.

\section{Boundary data construction}\label{Appendix2}

For any $0< \alpha< 1$ and two integers $n\ge2$ and $k\geq1$, let $(M^n,g)$ be a smooth compact $n$-dimensional Riemannian manifold with boundary $\partial M$ of class $C^{2k,\alpha}$. Here $C^{2k,\alpha}$ is the standard H$\ddot{\text{o}}$lder space. Let $\{f_m\}_{m=0}^{k}$ be a sequence of functions on $M$ such that $f_m\in C^{2(k-m),\alpha}(M)$ for each $0\leq m\leq k$. In this appendix, we will prove that there exists a function $\xi(x,t)\in C^{2k+\alpha,k+\frac{\alpha}{2}}(M\times [0,\epsilon])$ for some $\epsilon>0$ such that $\frac{\partial^m \xi}{\partial t^m}=f_m$ on $\partial M\times\{0\}$ for each $0\leq m\leq k$. This should be a standard result. But since we could not find it in the literature, we give a proof.

\begin{thm}\label{thm_bddataextension1}
Assume $(M^n,g)$ is a compact $n$-dimensional Riemannian manifold with boundary $\partial M$ of class $C^{2k,\alpha}$. Let $\{f_m\}_{m=0}^{k}$ be a sequence of functions on $M$ such that $f_m\in C^{2(k-m),\alpha}(M)$ for each $0\leq m\leq k$. Then there exists a function $\xi(x,t)\in C^{2k+\alpha,k+\frac{\alpha}{2}}(M\times [0,\epsilon])$ for some $\epsilon>0$ such that
\begin{align*}
\frac{\partial^m \xi}{\partial t^m}=f_m
\end{align*}
on $\partial M\times\{0\}$ for each $0\leq m\leq k$. Moreover, we can choose $\xi$ so that at each point $p\in \partial M$ with $f_m(p)=0$ for each $0\leq m\leq k$, we have $\frac{\partial^m}{\partial t^m}\xi(p,t)>0$ for each $0\leq m\leq k$ and $0<t<\epsilon_0$ with some constant $0<\epsilon_0<\epsilon$.
\end{thm}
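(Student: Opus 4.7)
My plan is to use a Taylor-type ansatz in $t$ with spatial mollification at the parabolic scale $\sqrt{t}$, followed by a small scalar correction to achieve the final positivity statement. First I would cover $M$ by finitely many coordinate charts (half-space charts near $\partial M$) with a subordinate partition of unity $\{\chi_i\}$, reducing the problem to a local one on $\mathbb{R}^n\times[0,\epsilon]$ after Whitney- (or Stein-) extending each $f_m$ to all of $\mathbb{R}^n$ while preserving its $C^{2(k-m),\alpha}$ regularity. The partition of unity acts only in $x$ and satisfies $\sum_i\chi_i\equiv 1$, so a local construction patches to a global one without disturbing the compatibility on $\partial M\times\{0\}$.

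In each chart I would set
\begin{equation*}
\xi_0(x,t)=f_0(x)+\sum_{m=1}^{k}\frac{t^m}{m!}\,(f_m\ast\rho_{\sqrt{t}})(x),
\end{equation*}
where $\rho_\sigma(y)=\sigma^{-n}\rho(y/\sigma)$ and $\rho\in C_c^\infty(\mathbb{R}^n)$ is a standard mollifier with $\int\rho=1$; $f_0$ needs no smoothing since it already lies in $C^{2k,\alpha}$. As $t\to 0$ the mollification converges to the identity, so the identity $\partial_t^m\xi_0|_{t=0}=f_m$ will follow from the Taylor expansion in $t$ (the terms arising when $\partial_t$ hits the scale $\sqrt{t}$ carry a net positive power of $t$ and vanish at $t=0$).

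The hard part will be checking $\xi_0\in C^{2k+\alpha,\,k+\alpha/2}$. The key estimate will be the standard mollifier bound $\|\partial_x^\beta(f_m\ast\rho_{\sqrt{t}})\|_{\infty}\le C\,t^{(2(k-m)+\alpha-|\beta|)/2}$ when $|\beta|>2(k-m)+\alpha$, coupled with the fact that each $\partial_t$ hitting the scale $\sqrt{t}$ produces a factor $t^{-1}$. Multiplying through by the prefactor $t^m$, the exponent for $\partial_x^\beta\partial_t^j\xi_0$ will come out to $(2k+\alpha-|\beta|-2j)/2$, which is non-negative whenever $|\beta|+2j\le 2k$; the marginal case $|\beta|+2j=2k$ gives exactly the Hölder exponents $\alpha$ in $x$ and $\alpha/2$ in $t$ demanded by the parabolic Hölder space. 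This parabolic scaling bookkeeping near $t=0$ is the main technical obstacle, but it is classical once organized carefully.

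For the final positivity assertion I would perturb by a scalar time factor. Letting $Z=\{p\in\partial M:f_m(p)=0,\ 0\le m\le k\}$ (a closed subset of $\partial M$), the parabolic regularity of $\xi_{\text{pre}}:=\sum_i\chi_i\xi_{0,i}$ together with $\partial_t^m\xi_{\text{pre}}(p,0)=f_m(p)=0$ for $p\in Z$ yields, by the parabolic Taylor expansion in $t$, a constant $C_0>0$ with
\begin{equation*}
|\partial_t^m\xi_{\text{pre}}(p,t)|\le C_0\,t^{k-m+\alpha/2},\qquad 0\le m\le k,
\end{equation*}
uniformly for $p\in Z$ and small $t$. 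I would then define $\xi(x,t)=\xi_{\text{pre}}(x,t)+C_1\,t^{k+\alpha/2}$ with a sufficiently large constant $C_1>0$. The correction lies in $C^{2k+\alpha,\,k+\alpha/2}$ (its $k$-th time derivative is a positive constant times $t^{\alpha/2}$) and satisfies $\partial_t^m[C_1t^{k+\alpha/2}]|_{t=0}=0$ for $0\le m\le k$, so compatibility is preserved. At $p\in Z$ and small $t>0$, writing $D_m:=\prod_{\ell=0}^{m-1}(k+\alpha/2-\ell)>0$, one obtains
\begin{equation*}
\partial_t^m\xi(p,t)\ge (C_1 D_m-C_0)\,t^{k-m+\alpha/2},
\end{equation*}
and choosing $C_1>C_0/\min_{0\le m\le k}D_m$ guarantees positivity on $(0,\epsilon_0)$ for some $0<\epsilon_0<\epsilon$.
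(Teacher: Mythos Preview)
Your approach is correct but genuinely different from the paper's. The paper proceeds by first proving an auxiliary lemma: by solving $k$ iterated Dirichlet problems $\Delta_g u_{m-1}=u_m$ on $M$ with $u_{m-1}=f_{m-1}$ on $\partial M$ (starting from $u_k=f_k$), one obtains $u_0\in C^{2k,\alpha}(M)$ with $\Delta_g^m u_0=f_m$ on $\partial M$ for all $0\le m\le k$. Then $(M,g)$ is extended to a slightly larger manifold $(M_1,g_1)$, $u_0$ is extended to $\tilde u_0\in C^{2k,\alpha}(M_1)$ vanishing near $\partial M_1$, and one solves the linear heat equation $u_t=\Delta_{g_1}u$ on $M_1\times[0,1]$ with initial data $\tilde u_0$ and zero boundary data. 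Restricting to $M$ gives $\xi_0\in C^{2k+\alpha,k+\alpha/2}$ with $\partial_t^m\xi_0|_{t=0}=\Delta_g^m u_0=f_m$ on $\partial M$, since the heat equation converts time derivatives into iterated Laplacians. The positivity correction $\xi=\xi_0+Ct^{k+\alpha/2}$ is then exactly as you propose.

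The trade-offs are clear. Your Taylor--mollification ansatz is more explicit and self-contained, requiring no auxiliary elliptic problems or manifold extensions, but it puts all the weight on the parabolic H\"older bookkeeping near $t=0$, which (as you note) is the delicate part. The paper's route exports that difficulty entirely to standard linear parabolic regularity theory: once the initial data $u_0$ is built with the right iterated Laplacians on $\partial M$, the $C^{2k+\alpha,k+\alpha/2}$ estimate is an off-the-shelf consequence of heat-equation Schauder theory, and the compatibility identities $\partial_t^m\xi_0=\Delta_g^m u_0$ are automatic from the equation. Both arguments terminate with the same scalar correction $Ct^{k+\alpha/2}$.
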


We start with a lemma.

\begin{lem}\label{lem_bddataextension1}
Assume $(M^n,g)$ is a compact $n$-dimensional Riemannian manifold with boundary $\partial M$ of class $C^{2k,\alpha}$. Let $\{f_m\}_{m=0}^{k}$ be a sequence of functions on $M$ such that $f_m\in C^{2(k-m),\alpha}(M)$ for each $0\leq m\leq k$. Then, there exists a function $u_0\in C^{2k,\alpha}(M)$ such that
\begin{align*}
\Delta_g^mu_0=f_m
\end{align*}
on $\partial M$ for $0\leq m\leq k$.
\end{lem}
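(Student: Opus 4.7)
The plan is to construct $u_{0}$ explicitly in a collar neighborhood of the boundary using Fermi coordinates $(q,x)$, where $x$ is the $g$-distance to $\partial M$; in these coordinates $g = dx^{2} + g_{x}$ with $g_{x}$ the induced metric on $\Sigma_{x}=\{x = \text{const}\}$, and the Laplacian takes the form
\[
\Delta_{g} = \partial_{x}^{2} + \tfrac{1}{2}H(q,x)\partial_{x} + \Delta_{g_{x}} .
\]
I would seek $u_{0}$ of the form
\[
u_{0}(q,x) = \eta(x)\sum_{j=0}^{2k}\frac{x^{j}}{j!}\,a_{j}(q),
\]
where $\eta\in C^{\infty}([0,\infty))$ is a cutoff equal to $1$ near $x=0$ and supported in $[0,\epsilon)$, the collar is identified with $\partial M\times[0,\epsilon)$, and the $a_{j}$'s are functions on $\partial M$ yet to be chosen. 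Outside the collar, $u_{0}$ is extended by zero.

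The key structural observation is that, because $\Delta_{g}^{m}$ has order $2m$, the trace $\Delta_{g}^{m}u_{0}|_{\partial M}$ depends only on the values at $x=0$ of derivatives of $u_{0}$ of total order at most $2m$; since $\partial_{x}^{i}u_{0}|_{x=0}=a_{i}$, it involves only $a_{0},\ldots,a_{2m}$ together with tangential derivatives of $a_{j}$ of order at most $2m-j$. A direct computation, using that $\partial_{x}^{2m}\bigl(x^{2m}a_{2m}/(2m)!\bigr)\big|_{x=0}=a_{2m}$, yields
\[
\Delta_{g}^{m}u_{0}\big|_{\partial M} = a_{2m} + L_{m}(a_{0},a_{1},\ldots,a_{2m-1}),
\]
where $L_{m}$ is a linear combination of tangential derivatives of the indicated $a_{j}$'s with coefficients built from $g$, $H$ and their derivatives. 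I would then set $a_{0} = f_{0}|_{\partial M}$, set $a_{2j+1} = 0$ for convenience, and recursively define
\[
a_{2m} = f_{m}\big|_{\partial M} - L_{m}(a_{0},\ldots,a_{2m-1}), \qquad m=1,\ldots,k,
\]
so that the desired boundary identities $\Delta_{g}^{m}u_{0}|_{\partial M}=f_{m}|_{\partial M}$ hold by design.

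It remains to verify that this $u_{0}$ lies in $C^{2k,\alpha}(M)$. For a term $\eta(x)x^{j}a_{j}(q)/j!$, the mixed derivative $\partial_{x}^{i}\partial_{q}^{\,l}$ of total order $\le 2k$ pulls out $\partial_{q}^{\,l}a_{j}$ with $l\le 2k - j$, so one needs $a_{j}\in C^{2k-j,\alpha}(\partial M)$. I would establish this by induction on $m$: $a_{0} = f_{0}|_{\partial M}\in C^{2k,\alpha}(\partial M)$ by the standard trace theorem (using that $\partial M$ is $C^{2k,\alpha}$), and assuming $a_{j}\in C^{2k-j,\alpha}(\partial M)$ for $j<2m$, each summand $\partial_{q}^{\,l}a_{j}$ of $L_{m}$ with $l\le 2m-j$ lies in $C^{2k-2m,\alpha}(\partial M)$, while $f_{m}|_{\partial M}\in C^{2(k-m),\alpha}(\partial M)$, so the recursion produces $a_{2m}\in C^{2k-2m,\alpha}(\partial M)$ as required.

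The main technical chore will be confirming the precise order structure of $L_{m}$: one must check that no tangential derivative of $a_{j}$ of order strictly greater than $2m-j$ ever appears, even though a single application of $\Delta_{g}$ can raise the tangential order by up to $2$ while lowering the $x$-order by up to $2$. This is ultimately bookkeeping on the differential operator $\Delta_{g}^{m}$, and once this is in place the construction is straightforward.
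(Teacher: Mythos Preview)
Your recursion for the coefficients $a_{j}$ is set up correctly and does produce $a_{2m}\in C^{2k-2m,\alpha}(\partial M)$, but the regularity claim for $u_{0}$ is where the argument breaks. The assertion that $\eta(x)\,x^{j}a_{j}(q)/j!\in C^{2k,\alpha}$ whenever $a_{j}\in C^{2k-j,\alpha}(\partial M)$ is false: membership in $C^{2k,\alpha}$ on the collar requires \emph{all} mixed partials $\partial_{x}^{i}\partial_{q}^{\,l}$ with $i+l\le 2k$ to exist and be H\"older, and taking $i=0$, $l=2k$ forces $a_{j}\in C^{2k,\alpha}(\partial M)$ regardless of $j$. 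The factor $x^{j}$ vanishes at $x=0$ but does not help at interior points of the collar; for instance, with $k=1$ your top coefficient $a_{2}=f_{1}|_{\partial M}-\Delta_{g_{0}}a_{0}$ is only $C^{0,\alpha}(\partial M)$, and $x^{2}a_{2}(q)$ is then not even $C^{1}$ in the tangential direction for $x>0$. The bound ``$l\le 2k-j$'' you wrote seems to conflate the normal index $i$ with the polynomial degree $j$.

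There is no cheap repair: since $f_{k}$ is only $C^{0,\alpha}(M)$, any construction that encodes $f_{k}|_{\partial M}$ through a single boundary coefficient multiplied by a smooth profile in $x$ will hit the same obstruction. The paper sidesteps this entirely by a different, much shorter device: set $u_{k}=f_{k}$ on $M$ and solve the chain of Dirichlet problems
\[
\Delta_{g}u_{m-1}=u_{m}\ \text{in }M,\qquad u_{m-1}=f_{m-1}\ \text{on }\partial M,\qquad m=k,k-1,\ldots,1.
\]
Schauder theory then gives $u_{m-1}\in C^{2(k-m+1),\alpha}(M)$ at each step, so $u_{0}\in C^{2k,\alpha}(M)$, and by construction $\Delta_{g}^{m}u_{0}=u_{m}$ in $M$ with $u_{m}|_{\partial M}=f_{m}$. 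The point is that elliptic regularity manufactures the two extra derivatives globally at each stage, which a Taylor-in-$x$ ansatz cannot do when the prescribed data have decreasing smoothness.
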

\begin{proof}
Let $u_k=f_k$ on $M$. Now for $m=k,k-1,...,1$, we solve the Dirichlet problems inductively
\begin{align*}
&\Delta_g u_{m-1}=u_m,\,\,\text{in}\,\,M,\\
&u_{m-1}=f_{m-1},\,\,\text{on}\,\,\partial M.
\end{align*}
For each problem with index $m$, we obtain a unique solution $u_{m-1}\in C^{2(k+1-m),\alpha}(M)$, for $1\leq m\leq k$, and hence $u_0\in C^{2k,\alpha}(M)$ is just the function we want.
\end{proof}
\begin{proof}[Proof of Theorem \ref{thm_bddataextension1}.] We extend the manifold $(M^n,g)$ to a compact Riemannian manifold $(M_1^n,g_1)$ with boundary $\partial M_1$ of class $C^{2k,\alpha}$ such that $M$ lies in the interior of $M_1$. Hence, $g=g_1\big|_{M}$. Let $u_0\in C^{2k,\alpha}(M)$ be the function constructed in Lemma \ref{lem_bddataextension1}. Now we extend $u_0$ to a function $\tilde{u}_0\in C^{2k,\alpha}(M_1)$ such that $\tilde{u}_0=0$ in a small neighborhood of $\partial M_1$. Hence $u_0=\tilde{u}_0\big|_{M}$.

Now we solve the Cauchy-Dirichlet problem of the heat equation
\begin{align*}
&u_t=\Delta_{g_1}u,\,\,\,\text{in}\,\,\,M_1\times[0,1],\\
&u=0,\,\,\,\,\,\,\text{on}\,\,\,\partial M_1\times[0,1],\\
&u=\tilde{u}_0,\,\,\,\text{on}\,\,\,M_1\times\{0\}.
\end{align*}
It is clear that this linear problem satisfies the compatibility condition since $\tilde{u}_0=0$ in a neighborhood of $\partial M_1$. By standard existence result for Cauchy-Dirichlet problem of the parabolic equation (see \cite{Lieberman}), there exists a unique solution $u\in C^{2k+\alpha,k+\frac{\alpha}{2}}(M_1\times[0,\epsilon])$ for some $\epsilon>0$. Let $\xi_0= u$ on $M\times[0,\epsilon]$. Then $\xi_0 \in C^{2k+\alpha,k+\frac{\alpha}{2}}( M\times [0,\epsilon])$ and there exists a constant $C>0$ and a constant $0<\epsilon_0<\epsilon$ such that the function $\xi\equiv\xi_0+Ct^{k+\frac{\alpha}{2}}$ satisfies that $\frac{\partial^m}{\partial t^m}\xi(p,t)>0$ for $0<t<\epsilon_0$ at the point $p\in \partial M$ such that $f_m(p)=0$ for $0\leq m\leq k$. Hence, $\xi\in C^{2k+\alpha,k+\frac{\alpha}{2}}(M\times [0,\epsilon])$ satisfies that
\begin{align*}
\frac{\partial^m\xi}{\partial t^m}=\Delta_g^m u_0=f_m,
\end{align*}
on $\partial M\times\{0\}$ for each $0\leq m\leq k$. This completes the proof of the theorem.
\end{proof}

\end{appendix}

 \end{document}